\newtheorem{theorem}{Theorem}[section]
\newtheorem{lemma}[theorem]{Lemma}
\newtheorem{remark}[theorem]{Remark}
\newtheorem{example}[theorem]{Example}
\newtheorem{proposition}{Proposition}
\numberwithin{equation}{section}
\newcommand{\R}{\mathbb{R}}
\newcommand{\N}{\mathbb{N}}
\newcommand{\rD}{\mathrm{D}}
\newcommand{\sym}{\mathrm{sym}}
\newcommand{\asym}{\mathrm{skew}}
\newcommand{\Herm}[1]{\mathfrak{p}(#1)}
\newcommand{\St}[2]{\mathrm{St}_{#1,#2}}  
\newcommand{\Sto}{\mathrm{St}}  
\newcommand{\Gr}[2]{\mathsf{Gr}_{#1,#2}}
\newcommand{\Gro}{\mathsf{Gr}}
\newcommand{\cX}{\mathcal{X}}
\newcommand{\cB}{\mathcal{B}}
\newcommand{\cE}{\mathcal{E}}
\newcommand{\cH}{\mathcal{H}}
\newcommand{\cI}{\mathcal{I}}
\newcommand{\Sd}[2]{\mathrm{S}^{+}_{#1, #2}}
\newcommand{\Sp}[1]{\mathrm{S}^{+}_{#1}}
\newcommand{\lb}{\llbracket}
\newcommand{\rb}{\rrbracket}
\newcommand{\cV}{\mathcal{V}}
\newcommand{\MVN}[1]{\mathtt{MVN}(n)}
\newcommand{\PiW}{\Pi_{\cW}}
\newcommand{\ttq}{\mathtt{q}}
\newcommand{\bbf}{\mathsf{f}}
\newcommand{\ft}{\mathsf{T}}
\newcommand{\rgrad}{\mathsf{rgrad}}
\newcommand{\cF}{\mathcal{F}}
\newcommand{\bcM}{\bar{\mathcal{M}}}
\newcommand{\cM}{\mathcal{M}}
\newcommand{\cQ}{\mathcal{Q}}
\newcommand{\bdD}{\mathring{\mathrm{D}}}
\newcommand{\cU}{\mathcal{U}}
\newcommand{\oo}{\mathfrak{o}}
\newcommand{\bB}{\bar{B}}
\newcommand{\sfa}{\mathsf{a}}
\newcommand{\crosscv}{\mathrm{cross}}
\newcommand{\sfg}{\mathsf{g}}
\newcommand{\rC}{\mathrm{C}}
\newcommand{\rA}{\mathrm{A}}
\newcommand{\cW}{\mathcal{W}}
\newcommand{\cN}{\mathcal{N}}
\newcommand{\cT}{\mathcal{T}}
\DeclareMathOperator{\Imag}{Im}
\DeclareMathOperator{\dI}{I}
\newcommand{\Lin}{\mathtt{Lin}}
\newcommand{\vxi}{v_{\xi}}
\newcommand{\veta}{v_{\eta}}
\newcommand{\mrGamma}{\mathring{\Gamma}}
\newcommand{\ttX}{\mathtt{X}}
\newcommand{\ttY}{\mathtt{Y}}
\newcommand{\ttZ}{\mathtt{Z}}
\newcommand{\ttW}{\mathtt{W}}
\newcommand{\tdX}{\widetilde{\mathtt{X}}}
\newcommand{\tdY}{\widetilde{\mathtt{Y}}}
\newcommand{\tdZ}{\widetilde{\mathtt{Z}}}
\newcommand{\ttP}{\mathtt{P}}
\newcommand{\qq}{\mathfrak{q}}
\newcommand{\tth}{\mathtt{h}}
\newcommand{\bdomg}{\mathring{\omega}}
\newcommand{\bdxi}{\mathring{\xi}}
\newcommand{\bdeta}{\mathring{\eta}}
\newcommand{\bomg}{\bar{\omega}}
\newcommand{\bxi}{\bar{\xi}}
\newcommand{\bareta}{\bar{\eta}}
\DeclareMathOperator{\SOO}{SO}
\DeclareMathOperator{\SE}{SE}
\DeclareMathOperator{\ttV}{V}
\DeclareMathOperator{\ttB}{B}
\DeclareMathOperator{\sA}{\mathsf{A}}
\DeclareMathOperator{\GammaH}{\Gamma_{\cH}}
\newcommand{\sAd}{\mathsf{A}^{\dagger}}
\DeclareMathOperator{\bRcB}{\widetilde{R}^{\cB}}
\DeclareMathOperator{\RcB}{R^{\cB}}
\DeclareMathOperator{\RcQ}{R^{\cQ}}
\DeclareMathOperator{\ttH}{H}
\DeclareMathOperator{\Rc}{R}
\DeclareMathOperator{\brD}{\bar{D}}
\DeclareMathOperator{\Tr}{Tr}
\DeclareMathOperator{\Null}{Null}
\DeclareMathOperator{\Hess}{Hess}
\DeclareMathOperator{\OO}{O}
\DeclareMathOperator{\egrad}{egrad}
\DeclareMathOperator{\Two}{II}
\begin{document}

\title[Geometry in global coordinates in mechanics and optimal transport]{Geometry in global coordinates with applications in mechanics and optimal transport}
\author{Du Nguyen}

\email{nguyendu@post.harvard.edu}

\address{
  organization={Independent},
  city={Darien},
  postcode={06820},
  state={CT},
  country={USA}
}

\begin{abstract}For a manifold embedded in an inner product space, we express geometric quantities such as {\it Hamilton vector fields, affine and Levi-Civita connections, curvature} in global coordinates. Instead of coordinate indices, the global formulas for most quantities are expressed as {\it operator-valued} expressions, using an {\it affine projection} to the tangent bundle. For a submersion image of an embedded manifold, we introduce {\it liftings} of Hamilton vector fields, allowing us to use embedded coordinates on horizontal bundles. We derive a {\it Gauss-Codazzi equation} for affine connections on vector bundles. This approach allows us to evaluate geometric expressions globally, and could be used effectively with modern numerical frameworks in applications. Examples considered include rigid body mechanics and Hamilton mechanics on Grassmann manifolds. We show explicitly the cross-curvature (MTW-tensor) for the {\it Kim-McCann} metric with a reflector antenna-type cost function on the space of positive-semidefinite matrices of fixed rank has nonnegative cross-curvature, while the corresponding cost could have negative cross-curvature on Grassmann manifolds, except for projective spaces.
\end{abstract}
\subjclass[2010]{Primary 53C05, 53C42, 70H05, 70H45, 70H33, 53D05, 53Z30, 53Z50}

\keywords{Embedded manifolds, Hamilton vector field, Riemannian geometry, curvature, submersion, cotangent bundle, Kim-McCann metric, Optimal transport, Reflector Antenna.}
\maketitle      
\section{Introduction}
\label{intro}
In geometric calculations, we often encounter submanifolds of an inner product space, defined by constraints. While local coordinates are often used theoretically, in practice, the dynamics of systems defined on a submanifold of $\cE:=\R^n$ could be expressed more conveniently in the global coordinates, (using Lagrange multipliers, for example), where both positions and velocities are expressed as points of $\cE$. Despite being widely used computationally, global coordinate treatments of fundamental concepts such as the identification of the cotangent bundle, connection, curvature, and lifting of submersions are not fully developed.

This paper is an attempt to treat the global coordinate approach systematically. We hope the contribution is timely, as, in recent years, the geometric approach is becoming important in engineering and data science applications, and an automated approach is favored. To compute differential geometric expressions systematically, the local chart approach is often difficult to use. As a chart is only a subset of a manifold, one may need to choose different charts to cover a manifold. Coordinate functions are usually computed by solving nonlinear equations, which are often hard. Further, geometric formulas often require using orthogonal bases, adding a layer of complexity. In contrast, the global approach works for any point of the manifold, involving {\it directional derivatives} of {\it operator-valued expressions} that are often {\it more convenient to work with}. A main finding of the paper is the {\it global formulas are simple generalizations of the local formulas} and can be used directly. Instead of vector fields, the formulas are in terms of a small number of operator-valued expressions that could be evaluated on (co)tangent vectors.

The effectiveness of this approach is immediate from the case of the sphere in \cref{expl:sphere}. The Levi-Civita connection and the curvature are computed immediately from the defining equation, with no trigonometric coordinate required, from global formulas in \cref{eq:LeviCivita,eq:rc1}. The global formula for Hamilton vector fields \cite{LeeLeokM}, surprisingly, is not well-known; with our extension, it implies a global formula for the Levi-Civita connection. Applying our approach, we introduce a new family of cost-function with nonnegative cross-curvature.

In \cite{KimMcCann}, Kim and McCann introduced a semi-Riemannian metric on a product of two manifolds given a cost function that plays an important role in {\it optimal transport}. It is shown \cite{KimMcCann,MTW,Loeper} that the smoothness of the associated {\it optimal map} is largely determined by the nonnegativity of the {\it MTW-tensor} or {\it cross-curvatures} (a form of sectional curvature, see \cref{sec:KimMcCann} for definition). Cost functions with nonnegative cross-curvature are of interest with a growing literature \cite{LeeMcCann,KimMcCann2012}. Using our framework,  in \cref{thm:crossSecFR}, we compute differential geometric measures for the Kim-McCann metric for a cost function similar to that of the {\it reflector antenna}, defined on the space of {\it fixed rank positive-semidefinite matrices}, and show the cross-curvature is nonnegative. We also study the induced geometry on {\it Grassmann manifolds}. We find the global formulas more intuitive and hope the readers will also be convinced through the examples presented.

Consider a smooth submanifold $\cQ$ of $\cE=\R^n$ for some integer $n$, where $\cE$ is equipped with an inner product. Assume $\cQ$ is defined by $k$ constraints ($k\in\N$), represented by an equation $\rC(q) = 0$, where $\rC$ is a (smooth) map from $\cE$ to $\cE_L=\R^k$. As a manifold, the tangent bundle $T\cQ$, the configuration space of position and velocity, is embedded naturally in $\cE^2$. Let $\rC'(q)$ be the Jacobian of $\rC$ at $q$, assuming smooth and regularity conditions, $T\cQ$ could be described as
\begin{equation}
  T\cQ = \{(q, v)\in \cE^2|\;\rC(q) = 0, \rC'(q)v = 0\}.
\end{equation}

A Hamiltonian function is a function on the cotangent bundle $T^*\cQ$, which is not embedded naturally in $\cE^2$. Using the inner product on $\cE$, $T^*\cQ$ could be identified with $T\cQ$, but this is not necessarily the most natural identification physically. We prove here an embedding of $T^*\cQ$ in $\cE^2$ is equivalent to a choice of a projection function $\Pi$, each $\Pi(q)$ ($q\in\cQ$) is an affine projection from $\cE$ onto the tangent space $T_q\cQ$, so $\Pi(q)\in\Lin(\cE, \cE)$, the space of linear map from $\cE$ to itself, and $\Pi(q)^2=\Pi(q)$. Then $T_q^*\cQ$ is identified with the range of the adjoint $\Pi(q)^{\ft}$ of $\Pi(q)$ on $\cE$.

As mentioned, the projection function $\Pi$ plays a central role in embedded mechanics and geometry, appearing in embedded generalizations of local formulas. A formulation of embedded Hamilton mechanics was given in \cite{LeeLeokM} for the case $\Pi(q)^{\ft} = \Pi(q)$. We {\it remove this restriction}, extending their result to affine projections and {\it simplify the formula} using the {\it Weingarten lemma} (\cref{eq:Wein}). The result could be expressed as a statement about Hamilton vector fields, where the Hamilton vector field of a function $H$ on $T^*Q$, extended to $\cE^2$ is given by
\begin{equation}
\ttX_H(q, p) =\begin{bmatrix}\Pi(q)H_p\\
    \Pi'(q; \Pi(q)H_p)^{\ft}p - \Pi(q)^{\ft}\{H_q +  \{X\mapsto \Pi'(q; X)^{\ft}p\}^{\ft}H_p \}\end{bmatrix}\label{eq:HamFlow}
\end{equation}
where $(H_q, H_p)$ is the gradient of $H$ as a $\cE^2$-function, $\Pi'(q, \xi)$ is the directional derivative of $\Pi$ in direction $\xi$, see \cref{thm:HVF}. The local coordinate formula $(H_p, -H_q)$ is a special case, where we identify an open chart in $\cQ$ with an open subset of its coordinate space $\R^{\dim\cQ}$, and $\Pi$ with the identity map. The pioneering work of \cite{LeeLeokM} inspires this result.

An important example \cite[section 3.7]{AbrahamMarsden} is a system with kinetic energy given by $H(q, p) =\frac{1}{2}p.\sfg(q)^{-1}p$, for $q\in\cQ$, with mass $\sfg(q)$ and momentum $p$. If the velocity $v$ is in $T_q\cQ$ and the momentum $p$ is in $T_q^*\cQ$, it is natural to think of the mass $\sfg(q)$ as a linear map from $T_q\cQ$ to $T_q^*\cQ$, which can be extended to an operator $\sfg(q)$ from $\cE$ to $\cE$ (see \ref{appx:Extend}). This induces a metric $v.\sfg(q) v$ on $\cQ$, making it a (semi)Riemannian manifold. If $\Pi(q)$ is the projection satisfying $\sfg(q)\Pi(q) = \Pi(q)^{\ft}\sfg(q)$, the cotangent space $T^*_q\cQ$ is identified with $\sfg(q)T_q\cQ = \Imag(\Pi(q)^{\ft})\subset \cE$. The Hamilton equations above imply a simple formula for the embedded geodesic equation, hence for the Levi-Civita connection (derived in \cite{Nguyen2020a}). The connection is expressed in terms of a global {\it Christoffel function} $\Gamma$ \cite{Edelman_1999}, a function from $\cQ$ to the space of bilinear functions $\Lin(\cE\otimes\cE, \cE)$, such that the covariant derivative of two vector fields $\ttX, \ttY$ is evaluated as $\nabla_{\ttX}\ttY = \rD_{\ttX}\ttY + \Gamma(\ttX, \ttY)$. We show  for two tangent vectors $\xi_1, \xi_2\in T_q\cQ$
\begin{equation}\Gamma(q; \xi_1, \xi_2) = -\Pi'(q; \xi_1)\xi_2 + \frac{1}{2}\Pi(q)\sfg(q)^{-1}(\sfg'(q; \xi_1)\xi_2 + \sfg'(q; \xi_2)\xi_1 - \chi_{\sfg}(q; \xi_1, \xi_2))\label{eq:LeviCivita}
\end{equation}
where $\chi_{\sfg}$ is an operator form of tensor index raising, which again reduces to the local Christoffel symbol formula \cite[section V.22.4]{Michor} when $\Pi$ is the identity map. This example is thus a Hamiltonian approach to \cite{Nguyen2020a}, where we applied this formula to problems in Riemannian optimization.

For a function $f(q)$ on $\cQ$ (a potential function) and a mass metric $\sfg$, the Euler-Lagrange equation \cref{eq:geodesic} for a {\it metric-potential} Hamiltonian of the form
\begin{equation}H(q, p) = \frac{1}{2}p . \sfg(q)^{-1}p + f(q)\label{eq:QKP}\end{equation}
is known to be $\ddot{q} + \Gamma(q; \dot{q}, \dot{q}) + \rgrad_f(q)=0$ \cite{AbrahamMarsden}, where $\rgrad_f$ is the Riemannian gradient of $f$, which also follows from the Hamilton method. This gives a global coordinate format of the Euler-Lagrange equation that is simple to use.

In general, a projection function $\Pi$ gives rise to a {\it torsion-free} affine connection (\cref{sec:curvature}), given by the Christoffel function $\Gamma^{\Pi}(\xi_1, \xi_2) = -\Pi'(q; \xi_1)\xi_2$. The projection gives a canonical way to extend a tangent vector at a point in $\cQ$ to a vector field on $\cQ$, allowing us to evaluate tensors defined by vector fields. In \cref{theo:cur_embed}, we express the curvature of an affine connection for tangent vectors $\xi_1, \xi_2$ and bundle vector $\xi_3$ in terms of the basis-independent global Christoffel function
$$\Rc(\xi_1, \xi_2)\xi_3 = \rD_{\xi_1}\Gamma(\xi_2, \xi_3) -
\rD_{\xi_2}\Gamma(\xi_1, \xi_3) + \Gamma(\xi_1, \Gamma(\xi_2, \xi_3)) - \Gamma(\xi_2, \Gamma(\xi_1, \xi_3)).
$$
This formula is the operator version of the well-known curvature formula \cite[section V.24.3]{Michor} using Christoffel symbols, but it also generalizes the Gau{\ss}-Codazzi equation \cite[section V.26.4]{Michor}(see \cref{rem:GaussCodazzi}). For an alternative method, we proved an {\it affine Gau{\ss}-Codazzi equation} for vector bundles using conjugate connections in \cref{subsec:secondfund}. Parts of the O'Neill tensors could be considered second fundamental forms in this setting, we apply this equation in \cref{theo:crossgrass}.

If we have a Riemannian submersion $\qq:\cQ\to\cB$ \cite{ONeill1966}, it is well-known the geometry of the manifold $\cB$ largely corresponds to the geometry of the horizontal bundle on $\cQ$, for example, geodesics on $\cB$ lift to horizontal geodesics on $\cQ$, there is a relationship between the curvatures in O'Neill's formula (ibid.).

We show if $\qq:\cQ\to\cB$ is a differentiable submersion with the vertical subbundle $\cV$, and $T\cQ$ splits to a direct sum $T\cQ=\cH\oplus\cV$ of horizontal and vertical bundles, we can lift a Hamilton vector field on $T^*\cB$ to a {\it horizontal Hamilton vector field} on $\cH^*$, and also provide an explicit formula for the Hamilton vector field similar to \cref{eq:HamFlow}. This is related to symplectic reduction. We suspect some of this is known, but could not locate a reference, thus, we believe a clear formulation of this lift is useful in applications.

For a Riemannian submersion, in \cref{prop:curvlift}, we prove a global formula for the {\it lift of the curvature tensor} in terms of the Christoffel function of the {\it induced Levi-Civita connection} on $\cH$ and {\it O'Neill's $\sA$-tensor}.

Our results could be used to derive several differential geometric expressions in both pure and applied mathematics. We provide examples of rigid body mechanics and the Grassmann manifolds. Our main application is to compute differential geometric measures of the {\it Kim-McCann metric} of a $\log$-type cost function, an analog of the {\it reflector antenna } cost  on the manifold of {\it positive semidefinite fixed rank matrices}. The rank-one case could be considered as a projective version of the classical model, cost functions of this format are related to information geometry \cite{WongInfo}. The non-commutativity of the novel higher-rank case makes the computation more difficult; the ability to verify the formulas numerically is helpful in deriving our result. We show in this case the cross-curvature is nonnegative (the optimal map is actually explicit). We also study the same cost function for {\it Grassmann manifolds}. Here, the higher-rank case does not have nonnegative cross-curvature. On the other hand, the Riemannian distant cost \cite{Loeper} for a Grassmann manifold has nonnegative cross-curvature on the diagonal, and we hope the method described in \cref{sec:reflectGrass} to be useful. Besides providing new examples of cost functions with nonnegative cross-curvature, these examples illustrate the framework provided here computationally.

Our method greatly simplifies the argument, if not the computation. We believe the global method will be useful in applications such as robotic or molecular dynamics, in instances where the projection could be computed effectively but not necessarily in closed form.

An important feature of the global coordinate approach advocated here is the use of {\it directional derivatives} on {\it operator-valued} functions for most formulas in this paper. With modern numerical libraries, taking directional derivative and composing Christoffel functions are done easily by several {\it automatic differentiation libraries}, we illustrate the examples with numerical implementations in \cite{NguyenVerify}.

\subsection{Outline of the paper}\label{subsec:outline}
In the next subsection, we introduce the main notations. In \cref{sec:BundleEmbeded}, we show cotangent bundle embeddings are characterized by projection functions, prove the Weingarten lemma, and describe the structure of tangent and cotangent bundles of a vector bundle. In \cref{sec:Hamechanics} we derive the equations for a Hamilton vector field. In \cref{sec:curvature} we describe {\it connections} and {\it connection maps} in global coordinates and derive the global curvature formula and the embedded geodesic equation. We study lifts of Hamilton vector fields for submersion in \cref{sec:HLift} and give a global lifting formula for Riemannian curvature with O'Neill's tensors. We show several application examples of our global formulas in \cref{sec:examples}: the {\it classical rigid body dynamics}, computing various geometric-mechanical quantities for the {\it Grassmann manifold}, and a {\it Kim-McCann metric on fixed-rank matrix manifolds}. We have a final discussion in \cref{sec:Discussion}. In the appendices, we prove a metric tensor of an embedded manifold extends to a metric operator.

\subsection{Notations}Without indicating otherwise, we consider an embedded manifold $\cQ\subset \cE$, where $\cE$ is a Euclidean space identified with $\R^n, n > 0$ is an integer. The inner product will be denoted by ``$.$'', or $\langle,\rangle_{\cE}$. In applications, the manifold $\cQ$ is often the solution set of a global constraint $\rC(q) = 0$ for $q\in\cQ$, where $\rC$ is a function from $\cE$ to another real vector space $\cE_L$ identified with $\R^k$, $k$ being the number of independent constraints. We will assume the Jacobian $\rC'(q)$ is of full rank for $q\in\cQ$, thus $\dim\cQ = n-k$. Our approach also works when a manifold is given parametrically.

The space of matrices of dimension $n\times m$ is denoted $\R^{n\times m}$ for $n,m\in\N$. We denote the space of linear maps between vector spaces $V, W$ by $\Lin(V, W)$. The differential of a map $f$ is denoted by $f'$, thus, the directional derivative at $q\in\cQ$ along the tangent direction $\xi\in T_q\cQ$ is denoted $f'(q; \xi)=f'(q)\xi$, the tangent space at $q$ is defined by the constraint $\rC'(q)v = 0$ on a tangent vector $v$. Similarly, the directional derivative of the projection function $\Pi$ defined below is denoted by $\Pi'(q; v)$ for $v\in T_q\cQ$.

We also use the notation $\rD_{\xi}f(q)$ for the directional derivative $f'(q;\xi)$ in case the latter is ambiguous. For example, if $\Gamma$ (the operator version of the Christoffel symbols) is a function from $\cQ$ to $\Lin(\cE\otimes\cE, \cE)$ , we write the directional derivative of $\Gamma$ in direction $v$ in variable $q\in\cQ$ as $\rD_v\Gamma(q; e_1, e_2)=\rD_v\Gamma(e_1, e_2)\in\cE$, evaluated at $e_1, e_2\in\cE$, as appeared in the curvature formula. When not specified, we understand the derivative is in the manifold variable $q\in\cQ$.

For an operator-valued function $F:q\mapsto F(q)$, we write $F^{\ft}$ for the operator-valued function $F^{\ft}:q\mapsto F(q)^{\ft}$. We write $A_{\asym} = \frac{1}{2}(A-A^{\ft}), A_{\sym} = \frac{1}{2}(A + A^{\ft})$ for a square matrix $A$.

If $\Psi$ is an operator-valued function defined on $\cQ$ and $e_1\in\cE$ is fixed, we have a linear function $X \mapsto \Psi'(q; X)e_1$ for $X\in T_q\cQ$. We denote by $\Phi:=\{X\mapsto \Psi'(q; X)e_1\}^{\ft}e_2$ an expression in $\cE$ such that for all $\Delta\in T_q\cQ, e_2\in\cE$
\begin{equation}\Delta . \{X\mapsto \Psi'(q; X)e_1\}^{\ft}e_2  = e_2 . \Psi'(q; \Delta)e_1.\label{eq:PsiX}
\end{equation}
Since $\Delta\mapsto e_2 . \Psi'(q; \Delta)e_1$ is a linear functional on $T_q\cQ$, there is a unique $\Phi(q, e_1, e_2)\in T_q\cQ$ satisfying the above, as the pairing is nondegenerate in $T_q\cQ$. If we only require $\Phi(q, e_1, e_2)\in\cE$, it is not unique (but easier to find). However, $\Pi(q)\Phi\in T_q\cQ$ is unique. This is an example of {\it index raising} that we will use in the paper. This type of index raising appeared in \cite{LeeLeokM}, it is the global version of the local orthogonal basis index raising.

\section{Preliminaries: bundles and embedded manifolds}\label{sec:BundleEmbeded}
Let $\cF$ be an inner product space with ``$.$'' denoting its inner product. Let $V\subset\cF, W\subset\cF$ be two subspaces. The pairing between $V$ and $W$ is nondegenerate if $v .W = 0$ for $v\in V$ implies $v=0$, and $V. w = 0$ for $w\in W$ implies $w=0$. Write this condition in terms of bases $\{v_i\}_{i=1}^{\dim_V}$  of $V$ and $\{w_j\}_{j=1}^{\dim_W}$ of $W$, the pairing matrix $M=(v_i .w_j)_{i,j}$ has rows and columns linearly independent, thus it is invertible, and any linear functional on either space is representable by a vector in the other space, thus $W$ could be identified with the dual space $V^*$ of $V$, and vice versa.

We call the two subspaces $V$ and $W$ of $\cF$ with nondegenerate pairing a pair of dual subspaces. They define a pair of adjoint affine projections. Recall an affine projection on $\cF$ is an idempotent linear operator $\Pi\in\Lin(\cF, \cF)$, $\Pi^2=\Pi$. If $\Imag(\Pi) = V$ then $\Pi$ is called a projection onto $V$. If $A$ is a linear operator on $\cF$, $A^{\ft}$ is the unique operator such that $A v. w = v. A^{\ft}w$ for $v, w\in \cF$.

\begin{lemma}\label{lem:proj1}Let $V$ and $W$ be a pair of dual subspaces of $\cF$ with $k = \dim V=\dim W$, so the pairing $V . W$ is nondegenerate. Then there exists a unique affine projection $\Pi\subset \Lin(\cF, \cF)$ such that $\Pi\cF = V, \Pi^{\ft}\cF=W$. If $\{v_i| i=1\cdots k\}$ is a basis of $V$, $\{w_i|i=1\dots k\}$ is the dual basis in $W$ (so $v_i.w_j = \delta_{ij}$, the Kronecker's delta), then for $f\in\cF$ we have
  $\Pi f = \sum_{i=1}^k (f. w_i)v_i$, $ \Pi^{\ft} f = \sum_1^k (f. v_i)w_i$.

  Conversely, given a subspace $V\subset \cF$ and a projection $\Pi$ onto $V$. Then $V$ and $\Pi^{\ft}\cF$ form a pair of dual subspaces.
  
   For $f\in\cE$, if $\Pi$ is a projection onto $V$ then
   \begin{equation}\Pi f = 0\quad\Leftrightarrow\quad v_*. f = 0\text{ for all }v_*\in\Pi^{\ft}\cF. \label{eq:Piq0}
\end{equation}  
\end{lemma}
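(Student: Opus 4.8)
The plan is to dispatch all three assertions by elementary linear algebra, using only the nondegeneracy of the pairing $V.W$ and the idempotency $\Pi^2=\Pi$; write $n:=\dim\cF$. For the existence statement I would start from any basis $\{v_i\}_{i=1}^k$ of $V$ and any basis $\{w_j'\}_{j=1}^k$ of $W$; as noted just before the lemma, nondegeneracy makes the matrix $M=(v_i.w_j')_{i,j}$ invertible, so setting $w_j:=\sum_l (M^{-1})_{lj}w_l'$ produces a basis $\{w_i\}$ of $W$ with $v_i.w_j=\delta_{ij}$. One then \emph{defines} $\Pi f:=\sum_i (f.w_i)v_i$ and checks directly: linearity is clear; $\Pi v_i=v_i$ by the biorthogonality, hence $\Pi^2=\Pi$; $\Imag(\Pi)\subseteq V$ is obvious and $\Pi v=v$ for $v\in V$ gives $\Imag(\Pi)=V$; and expanding $\Pi f.g=\sum_i(f.w_i)(v_i.g)=f.\sum_i(g.v_i)w_i$ identifies $\Pi^{\ft}g=\sum_i(g.v_i)w_i$, so $\Imag(\Pi^{\ft})=W$ and both displayed formulas hold.

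For uniqueness I would show that the two requirements $\Imag(\Pi)=V$ and $\Imag(\Pi^{\ft})=W$ already pin down $\ker\Pi$. Indeed, for $f\in\cF$,
\[
\Pi f=0\iff \Pi f.g=0\ \ \forall g\iff f.\Pi^{\ft}g=0\ \ \forall g\iff f.w=0\ \ \forall w\in W,
\]
so $\ker\Pi=W^{\perp}$. Since $\dim V+\dim W^{\perp}=k+(n-k)=n$ and $V\cap W^{\perp}=\{0\}$ by nondegeneracy, $\cF=V\oplus W^{\perp}$, and an affine projection is determined by its image and kernel; hence $\Pi$ is forced to be the projection onto $V$ along $W^{\perp}$, which gives uniqueness. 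The same displayed chain, read with $v_*$ ranging over $\Imag(\Pi^{\ft})$, is exactly the equivalence \cref{eq:Piq0}, so the final claim of the lemma comes for free.

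For the converse, given a projection $\Pi$ onto $V$ I would set $W:=\Pi^{\ft}\cF$ and verify the pairing $V.W$ is nondegenerate. Dimension count: $\dim W=\rank\Pi^{\ft}=\rank\Pi=\dim V$. If $v\in V$ satisfies $v.W=0$, then $v.\Pi^{\ft}g=\Pi v.g=v.g=0$ for all $g$ (using $\Pi v=v$), so $v=0$; conversely if $w=\Pi^{\ft}g\in W$ satisfies $V.w=0$, then $v.g=\Pi v.g=v.\Pi^{\ft}g=v.w=0$ for all $v\in V$, so $g\perp V$, whence $w.f=\Pi^{\ft}g.f=g.\Pi f=0$ for all $f\in\cF$ (as $\Pi f\in V$), i.e. $w=0$. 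I do not expect a genuine obstacle anywhere; the one point deserving a little care is the uniqueness step, where one must notice that requiring \emph{both} $\Imag(\Pi)=V$ and $\Imag(\Pi^{\ft})=W$ — not merely the former — is precisely what forces $\ker\Pi=W^{\perp}$ and hence the operator itself.
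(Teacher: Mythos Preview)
Your proposal is correct and covers all parts of the lemma. The argument is essentially the same as the paper's, with one minor organizational difference worth noting: for uniqueness, the paper computes the coefficients of $\Pi f$ directly via $\Pi f.w_j=f.\Pi^{\ft}w_j=f.w_j$ (using $\Pi^{\ft}w_j=w_j$ since $w_j\in W=\Imag\Pi^{\ft}$), which forces the formula immediately, whereas you characterize $\ker\Pi=W^{\perp}$ and invoke that an idempotent is determined by its image and kernel. Both are standard and equally valid; your route has the small bonus that it makes \cref{eq:Piq0} drop out of the same chain of equivalences, while the paper's route avoids the separate direct-sum verification $\cF=V\oplus W^{\perp}$ but then adds a short basis-independence check that your abstract uniqueness argument renders unnecessary.
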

\begin{proof} If $V, W$ is a dual pair with dual bases $\{v_i\}\{w_j\}, i,j=1\cdots k$ of $V, W$ as specified. Then $\Pi\cF = V, \Pi^{\ft}\cF = W$ implies the $j$-th coefficient of $\Pi f$ in the basis $\{v_i\}$ is $\Pi f . w_j = f . \Pi^{\ft}w_j = f. w_j$. Since $\Pi \cF = V$, this uniquely defines $\Pi f$, and $\Pi f = \sum_{i=1}^k (f. w_i)v_i=:v_f$. For $f, f_1\in \cE$
  $$(\Pi f) . f_1 = (\sum_{i=1}^k (f. w_i)v_i) . f_1 = \sum_{i=1}^k (f. w_i)(v_i . f_1) = f . \sum_{i=1}^k (v_i . f_1)w_i.$$
  Thus, $\Pi^{\ft}:f_1 \mapsto \sum_{i=1}^k (v_i . f_1)w_i$, and $v_i. w_j = \delta_{ij}$ implies $\Pi^2=\Pi$. The map $\Pi$ does not depend on bases, if the basis $\{v_i\}$ changes by a matrix $C=(c_{il})_{i,l=1}^k$,  $v_i=\sum c_{il}\tilde{v}_l$, ($\{\tilde{v}_l\}$ denotes the new basis), then $\{w_i\}$ relates to the new dual basis $\{\tilde{w}_m\}$ by $D=(d_{im})_{i,m=1}^k$, with $D^TC=I_k$ so
  $$\Pi f = \sum_{i=1}^k (f.w_i) v_i = \sum_{i, m, l} d_{im} (f. \tilde{w}_m) c_{il}\tilde{v}_l = \sum_{i=1}^k (f. \tilde{w}_l) \tilde{v}_l.$$

Let $w\in W=\Pi^{\ft}\cF$, then $w. V=\{0\}$ implies $w.f = \Pi^{\ft}w . f = w. \Pi f = 0$ for $f \in \cF$, thus $w=0$. Switching $V$ and $W$, if $v .W =0$ for $v\in V$ then $v=0$, thus we have a dual pair.

Equation (\ref{eq:Piq0}) follows from $v_*. f = \Pi^{\ft}v_*. f = v_*. \Pi f$ for $v_*\in W$.
\end{proof}
Assuming a vector bundle $\cW$ over $\cQ$ is embedded in the trivial bundle $\cQ\times\cF$ for a vector space $\cF$ , which we will equip with an inner product (such embeddings always exist for manifolds, see the map $\psi$ in the proof of \cite[Theorem III.8.11]{Michor}). We denote by $\cW_q$ the fiber over $q\in\cQ$.
\begin{lemma}\label{lem:proj2}1) Let $\cQ\subset \cE$ be a manifold and $\cF$ be an inner product space. Let $\cW$ and $\cW^{\dagger}$ be two subbundles of $\cQ\times\cF$ such that $(\cW_q, \cW_q^{\dagger})$ forms a dual pair for $q\in\cQ$. Then we can identify $\cW^{\dagger}$ with $\cW^*$, the dual bundle of $\cW$. If $\Pi(q)$ is the corresponding projection in \cref{lem:proj1} then $\Pi: q\mapsto \Pi(q)$ is a smooth function from $\cQ$ to $\Lin(\cF, \cF)$.

2) Let $\Pi:\cE\to \Lin(\cF,\cF)$ be a smooth map, such that for each $q\in\cQ$, $\Pi(q)$ is an affine projection of constant rank $k$ onto $\cW_q:=\Imag(\Pi(q))$. Then the set
  \begin{equation}\cW = \{(q, w)\in \cQ\times \cF|w\in\Imag(\Pi(q))\}
  \end{equation}
  is a subbundle of $\cQ\times \cE$ over $\cQ$. Dually,
  \begin{equation}\cW^{\dagger} = \{(q, w_*)\in \cQ\times \cF| w_*\in \Imag(\Pi(q)^{\ft}) \}
  \end{equation}
is a subbundle of $\cQ\times \cF$, which could be identified with $\cW^*$.
\end{lemma}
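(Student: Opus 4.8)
The plan is to establish the two parts of Lemma~\ref{lem:proj2} by reducing each to the fiberwise statement in Lemma~\ref{lem:proj1} and then checking that the local triviality and smoothness conditions of a vector bundle are satisfied. For part 1), the identification of $\cW^{\dagger}$ with $\cW^*$ is essentially immediate: for each $q$, the nondegenerate pairing $\cW_q . \cW_q^{\dagger}$ identifies $\cW_q^{\dagger}$ with $(\cW_q)^*$ as in the discussion preceding Lemma~\ref{lem:proj1}, and since both $\cW$ and $\cW^{\dagger}$ are given as subbundles, this pointwise identification is automatically a bundle isomorphism once we know it is smooth. The only real content is the smoothness of $q\mapsto \Pi(q)$.

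For the smoothness of $\Pi$, I would argue locally. Fix $q_0\in\cQ$ and choose local smooth frames $\{v_i(q)\}_{i=1}^k$ of $\cW$ and $\{\tilde w_j(q)\}_{j=1}^k$ of $\cW^{\dagger}$ near $q_0$ (these exist because $\cW, \cW^{\dagger}$ are subbundles). The pairing matrix $M(q) = (v_i(q).\tilde w_j(q))_{i,j}$ is smooth and, by nondegeneracy of the pairing, invertible at $q_0$, hence invertible and smoothly varying on a neighborhood. The \emph{dual} frame of $\cW^{\dagger}$ relative to $\{v_i(q)\}$ is then $w_i(q) = \sum_j (M(q)^{-T})_{ij}\,\tilde w_j(q)$, which is smooth by Cramer's rule. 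The explicit formula from Lemma~\ref{lem:proj1}, $\Pi(q)f = \sum_{i=1}^k (f.w_i(q))\,v_i(q)$, then exhibits $\Pi(q)$ as a smooth $\Lin(\cF,\cF)$-valued function of $q$, since it is a composition of smooth vector- and covector-valued maps. This formula is also independent of the chosen frames, as already verified in the proof of Lemma~\ref{lem:proj1}, so the local pieces glue to a globally defined smooth $\Pi$.

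For part 2), given the smooth constant-rank projection-valued map $\Pi$, I would show $\cW = \{(q,w): w\in\Imag(\Pi(q))\}$ is a smooth subbundle by producing local trivializations. Near a fixed $q_0$, pick vectors $f_1,\dots,f_k\in\cF$ such that $\Pi(q_0)f_1,\dots,\Pi(q_0)f_k$ are a basis of $\Imag(\Pi(q_0))$; by continuity and the constant-rank hypothesis, $v_i(q):=\Pi(q)f_i$ remain linearly independent and hence a smooth local frame of $\cW$ on a neighborhood of $q_0$. This simultaneously shows $\cW$ is a locally trivial rank-$k$ subbundle and that the inclusion $\cW\hookrightarrow\cQ\times\cF$ is a smooth bundle map. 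The dual statement for $\cW^{\dagger} = \{(q,w_*): w_*\in\Imag(\Pi(q)^{\ft})\}$ follows by applying the same argument to the smooth map $q\mapsto\Pi(q)^{\ft}$, which is a constant-rank (rank $k$, since $\rank\Pi(q)^{\ft}=\rank\Pi(q)$) idempotent; and the identification $\cW^{\dagger}\cong\cW^*$ is then the statement of part 1) together with the converse direction of Lemma~\ref{lem:proj1}, which guarantees $(\cW_q,\Imag(\Pi(q)^{\ft}))$ is a dual pair for every $q$.

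The main obstacle, and the only step requiring genuine care, is the smoothness/local-triviality argument — specifically verifying that the constant-rank hypothesis is exactly what is needed to upgrade the pointwise image decomposition to a smooth local frame, and that the frame $v_i(q)=\Pi(q)f_i$ really does stay a basis on a whole neighborhood (a rank-lower-semicontinuity plus constant-rank argument). Everything else — the algebraic identifications, idempotency of $\Pi^{\ft}$, and the $\cW^{\dagger}\cong\cW^*$ correspondence — is a direct transcription of Lemma~\ref{lem:proj1} fiber by fiber, so I would treat those briefly and concentrate the write-up on the frame construction.
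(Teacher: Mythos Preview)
Your proposal is correct and follows essentially the same approach as the paper. For part 2) you and the paper both push a basis of $\Imag(\Pi(q_0))$ through $\Pi(\cdot)$ to obtain a smooth local frame, using constant rank plus continuity to preserve linear independence on a neighborhood; for part 1) your explicit dual-frame construction via inversion of the pairing matrix is a more detailed version of what the paper summarizes in one sentence by appealing to smoothness of the bundle charts.
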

\begin{proof} For 1), $\Pi$ is well-defined from \cref{lem:proj1}, the basis independent part of that lemma guarantees $\Pi$ is independent of coordinate choice. It is smooth as the bundle projection maps $\pi_{\cW}:\cW\to\cQ$ and $\pi_{\cW^{\dagger}}:\cW^{\dagger}\to\cQ$ are smooth in each bundle chart.

  For 2), let $\pi_{\cW}$ be the map $\pi_{\cW}:(q, w)\mapsto q$ from $\cW$ to $\cQ$, then $\pi_{\cW}^{-1}(q) = \Imag(\Pi(q))$. To show $(\cW, \pi_{\cW})$ defines a vector bundle, choose a basis $B_q = \{v_i |i=1\cdots l\}$ of $\Imag(\Pi(q))$ at $q\in\cQ$, and let $\cU_q$ be an open set around $q$ such that $\Pi(y)B_q$ is linearly independent for $y\in \cU_q$, thus $\Pi(y)B_q$ is a basis of $\Imag(\Pi(y))$, and assume $\psi_{\cU_q}:\cU_q\to\R^{\dim\cQ}$ is a coordinate function (narrow the domain of $\cU_q$ if necessary). Set
  $$\hat{\psi}_{\cU_q}: \pi_{\cW}^{-1}(\cU_q)\mapsto\R^{\dim\cQ}\times\R^{\dim\Imag \Pi(q)},\;\;\;
  \hat{\psi}_{\cU_q}(y, \sum_i c_i\Pi(y)v_i) = (\psi_{\cU}(y), (c_i)_{i=1}^l)
  $$
then $\hat{\psi}_{\cU_q}$ is a coordinate function for $\pi_{\cW}^{-1}(\cU_q)$. The collection $(\cU_q, \hat{\psi}_{\cU_q})_{q\in\cQ}$ defines a bundle atlas \cite[section III.8.1]{Michor} as the transition functions are linear, giving $\cW$ a vector bundle structure. Replacing $\Pi$ by $q\mapsto \Pi(q)^{\ft}$, then $\cW^{\dagger}$ is a subbundle, duality follows from \cref{lem:proj1}.
\end{proof}
Since the inner product pairing is positive-definite on all subspaces, the pairing between $\cW$ to itself is nondegenerate, thus $\cW$ could be identified with its dual. In this case, $\Pi=\Pi^{\ft}$ is the orthogonal projection from $\cE$ to $\cW$.

As $\Pi$ is an operator-valued function, we can take directional derivatives. We have a generalization of the classical Weingarten lemma \cite[section V.26.1]{Michor} (as $\Pi'$ is the {\it second fundamental form} in the Riemannian embedding $\cQ\subset \cE$, see \cref{subsec:secondfund}).

\begin{lemma}[affine Weingarten] Assume the bundle $\cW\subset \cQ\times\cF$ is defined by the projection function $\Pi$ as in \cref{lem:proj2}. For $f\in \cW_q$, $\Delta_q\in T_q\cQ$
  \begin{equation}\label{eq:Wein}
    \Pi(q)\Pi'(q; \Delta_q)f = 0.
  \end{equation}
If $\cW = T\cQ$, $p \in T^*\cQ, v\in T\cQ$ and define $\{X\mapsto \Pi'(q; X)^{\ft}p\}^{\ft}$ by \cref{eq:PsiX} then
\begin{equation}
  \Pi(q)^{\ft}\{X\mapsto \Pi'(q; X)^{\ft}p\}^{\ft}v = 0.\label{eq:WeinX}
\end{equation}  
\end{lemma}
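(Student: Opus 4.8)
The plan is to read off both identities from the single fact that $\Pi(q)^2=\Pi(q)$ holds for every $q\in\cQ$, by differentiating it along the manifold. For the first identity, fix $q\in\cQ$ and $\Delta_q\in T_q\cQ$; a curve realizing $\Delta_q$ can be chosen inside $\cQ$, so $\Pi(\cdot)^2=\Pi(\cdot)$ may be differentiated in the direction $\Delta_q$ without leaving $\cQ$, yielding the operator identity
\begin{equation*}
\Pi(q)\,\Pi'(q;\Delta_q)+\Pi'(q;\Delta_q)\,\Pi(q)=\Pi'(q;\Delta_q)
\end{equation*}
on $\cF$. Evaluating both sides at $f\in\cW_q=\Imag(\Pi(q))$ and using $\Pi(q)f=f$, the two copies of $\Pi'(q;\Delta_q)f$ cancel and leave $\Pi(q)\Pi'(q;\Delta_q)f=0$, which is \cref{eq:Wein}. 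Since $f$ ranges over all of $\Imag(\Pi(q))$, this is equivalent to the operator identity $\Pi(q)\,\Pi'(q;\Delta_q)\,\Pi(q)=0$.

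For \cref{eq:WeinX} we have $\cW=T\cQ$, so $\Imag(\Pi(q))=T_q\cQ$ and, by \cref{lem:proj1}, $\Imag(\Pi(q)^{\ft})$ is its nondegenerate dual subspace, identified with $T_q^*\cQ$; in particular $\Pi(q)v=v$ for $v\in T_q\cQ$ and $\Pi(q)^{\ft}p=p$ for $p\in T_q^*\cQ$. Set $\Phi:=\{X\mapsto\Pi'(q;X)^{\ft}p\}^{\ft}v$; note that $X\mapsto\Pi'(q;X)^{\ft}p$ equals $X\mapsto(\Pi^{\ft})'(q;X)p$, so $\Phi$ is an instance of the index raising of \cref{eq:PsiX} (with $\Psi=\Pi^{\ft}$, $e_1=p$, $e_2=v$). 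The strategy is to show $\Pi(q)^{\ft}\Phi$ pairs to zero with every $\Delta\in T_q\cQ$ and then invoke nondegeneracy of the $T_q\cQ$--$\Imag(\Pi(q)^{\ft})$ pairing (\cref{lem:proj1}) to conclude $\Pi(q)^{\ft}\Phi=0$. Concretely, for $\Delta\in T_q\cQ$ one uses, in turn, $\Pi(q)\Delta=\Delta$, the defining property \cref{eq:PsiX}, the adjoint relation $Av . w = v . A^{\ft}w$, and $\Pi(q)^{\ft}p=p$:
\begin{align*}
\Delta . \Pi(q)^{\ft}\Phi &=\Delta . \Phi = v . \Pi'(q;\Delta)^{\ft}p=(\Pi'(q;\Delta)v) . p\\
&=(\Pi'(q;\Delta)v) . \Pi(q)^{\ft}p=(\Pi(q)\Pi'(q;\Delta)v) . p=0,
\end{align*}
the last step by \cref{eq:Wein} applied to $v\in\cW_q=T_q\cQ$. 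Since $\Delta$ was arbitrary and $\Pi(q)^{\ft}\Phi\in\Imag(\Pi(q)^{\ft})$, this forces $\Pi(q)^{\ft}\Phi=0$, i.e.\ \cref{eq:WeinX}.

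There is no analysis involved here --- only the one differentiation of $\Pi^2=\Pi$ and the repeated use of the ``absorption'' identities $\Pi(q)v=v$, $\Pi(q)\Delta=\Delta$, $\Pi(q)^{\ft}p=p$, with care about where each $\ft$ sits. The one delicate point, which must be settled before the computation above even makes sense, is that $\Phi=\{X\mapsto\Pi'(q;X)^{\ft}p\}^{\ft}v$ is only determined modulo a vector orthogonal to $T_q\cQ$; one should first observe (as in the remark following \cref{eq:PsiX}) that $\Pi(q)^{\ft}\Phi$ is nevertheless well defined, since the difference of two representatives is orthogonal to $T_q\cQ$ and hence lies in $\ker\Pi(q)^{\ft}$ by \cref{eq:Piq0} read with $\Pi(q)^{\ft}$ in place of $\Pi(q)$. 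With that remark in place, \cref{eq:WeinX} is a meaningful statement and the displayed computation closes it.
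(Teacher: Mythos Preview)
Your proof is correct and follows essentially the same approach as the paper: differentiate the idempotent relation $\Pi^2=\Pi$ for \cref{eq:Wein}, then pair with $p=\Pi(q)^{\ft}p$ and use the defining property \cref{eq:PsiX} together with nondegeneracy (equivalently \cref{eq:Piq0}) for \cref{eq:WeinX}. Your additional remark that $\Pi(q)^{\ft}\Phi$ is well defined despite $\Phi$ being determined only modulo $T_q\cQ^{\perp}$ is a nice clarification that the paper leaves implicit.
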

\begin{proof}Differentiate $\Pi(q)^2f = \Pi(q) f$, then use $\Pi(q)f = f$ to simplify the below
  $$\Pi'(q, \Delta_q)\Pi(q)f + \Pi(q)\Pi'(q, \Delta_q)f = \Pi'(q, \Delta_q) f.$$
  to \cref{eq:Wein}. Thus, $f_*. \Pi'(q; \Delta_q)f  = f_*. \Pi(q)\Pi'(q; \Delta_q)f  =0$ if $f_*\in \cW^*$, or
$$0 = \Pi'(q; \Delta_q)^{\ft}f_*. f = \Delta_q . \{X\mapsto \Pi'(q; X)^{\ft}f_*\}^{\ft} f.$$
This implies \cref{eq:WeinX}, if $\cW=T\cQ, f = v, f_*=p$.
\end{proof}  
\begin{lemma}\label{lem:higherTangent} Assume the subbundles $\cW, \cW^*\subset \cQ\times\cF$ are defined by the projection functions $\Pi_{\cW}$ and $\Pi_{\cW}^{\ft}$, and the tangent and cotangent bundles of $\cQ$ are defined by $\Pi$ and $\Pi^{\ft}$. The fibers of the tangent bundles of $\cW$ and $\cW^*$ at $(q, f)\in \cW, (q, f^*)\in \cW^*$ are defined by
\begin{gather}
    T_{q, f}\cW = \{(\Delta_q, \Delta_f)\in T_q\cQ\times\cF|\Pi_{\cW}'(q; \Delta_q)f + \Pi_{\cW}(q)\Delta_f = \Delta_f
    \}\label{eq:TW},\\
    T_{q, f^*}\cW^* = \{(\Delta_q, \Delta_{f^*})\in T_q\cQ\times\cF|
    \Pi_{\cW}'(q; \Delta_q)^{\ft}f^* + \Pi_{\cW}^{\ft}(q)\Delta_{f^*} = \Delta_{f^*}
  \}\label{eq:TWs}.
\end{gather}
Define the projection functions from $\cE\times\cF$ to $T_{q, f}\cW, T_{q, f^*}\cW^*,
T^*_{q, f}\cW, T^*_{q, f^*}\cW^*$ 
\begin{gather}
  \Pi_{T\cW}(q, f)(\omega, \phi) =   (\Pi(q)\omega, \Pi_{\cW}'(q; \Pi(q)\omega)f + \Pi_{\cW}(q)\phi)\label{eq:piTW},\\
  \Pi_{T\cW^*}(q, f^*)(\omega, \phi) =   (\Pi(q)\omega, \Pi_{\cW}'(q; \Pi(q)\omega)^{\ft}f^* + \Pi_{\cW}(q)^{\ft}\phi),\label{eq:piTWs}\\
  \Pi_{T^*\cW}(q, f)(\omega, \phi) =   (\Pi(q)^{\ft}\{\omega + \{\Delta\mapsto \Pi_{\cW}'(q; \Delta)f\}^{\ft}\phi\}, \Pi_{\cW}(q)^{\ft}\phi)\label{eq:pisTW},\\
  \Pi_{T^*\cW^*}(q, f^*)(\omega, \phi) = (\Pi(q)^{\ft}\{\omega + \{\Delta\mapsto \Pi_{\cW}'(q; \Delta)f^*\}^{\ft}\phi\}, \Pi_{\cW}(q)\phi)
\end{gather}
for $(\omega, \phi)\in \cE\times \cF$ with $\Pi_{T\cW}(q, f)^{\ft} = \Pi_{T^*\cW}(q, f)$, $\Pi_{T\cW*}(q, f^*)^{\ft} = \Pi_{T^*\cW^*}(q, f^*)$. Thus, the cotangent bundles $T^*\cW$ and $T^*\cW^*$ could be embedded in $(\cE\times\cF)^2$ with fibers at $(q, f)\in \cW$ and $(q, f^*)\in\cW^*$ as
\begin{gather}
    T_{q, f}^*\cW = (p_q, p_f)|\Pi(q)^{\ft}p_q = p_q,\Pi_{\cW}^{\ft}(q)p_f = p_f  \}\label{eq:TsW},    \\
    T_{q, f^*}^*\cW^* = (p_q, \Delta_f)| \Pi(q)^{\ft}p_q = p_q, \Pi_{\cW}(q)\Delta_f = \Delta_f  \}.    \label{eq:TsWs}
\end{gather}
\end{lemma}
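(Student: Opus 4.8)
The plan is to prove the four groups of claims in turn: the tangent-fibre formulas \eqref{eq:TW}--\eqref{eq:TWs}; that each of the four operators $\Pi_{T\cW}$, $\Pi_{T\cW^*}$, $\Pi_{T^*\cW}$, $\Pi_{T^*\cW^*}$ is an idempotent with the advertised image; the adjoint relations $\Pi_{T\cW}(q,f)^{\ft}=\Pi_{T^*\cW}(q,f)$ and $\Pi_{T\cW^*}(q,f^*)^{\ft}=\Pi_{T^*\cW^*}(q,f^*)$; and finally \eqref{eq:TsW}--\eqref{eq:TsWs}, which will drop out of the adjoint relations together with \cref{lem:proj2}. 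For the first group I would use that $\cW=\{(q,w):\Pi_{\cW}(q)w=w\}$: differentiating the identity $\Pi_{\cW}(q(t))w(t)=w(t)$ along a curve in $\cW$ shows every tangent vector $(\Delta_q,\Delta_f)$ satisfies $\Delta_q\in T_q\cQ$ and $\Pi_{\cW}'(q;\Delta_q)f+\Pi_{\cW}(q)\Delta_f=\Delta_f$; conversely, given such a pair, take a curve $q(t)$ in $\cQ$ with $\dot q(0)=\Delta_q$ and set $w(t):=\Pi_{\cW}(q(t))(f+t\Delta_f)$, which lies in $\cW$, starts at $f$ (as $f\in\cW_q$), and has $\dot w(0)=\Delta_f$ by the constraint; this proves \eqref{eq:TW}, and also that $\dim T_{q,f}\cW=\dim\cW$, since for fixed $\Delta_q$ the admissible $\Delta_f$ form a coset of $\Imag\Pi_{\cW}(q)$ (solvability of the constraint being exactly \eqref{eq:Wein}). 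Equation \eqref{eq:TWs} is the same argument applied to the projection function $\Pi_{\cW}^{\ft}$.

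Next I would verify that $\Pi_{T\cW}(q,f)$ is a projection onto $T_{q,f}\cW$: that it lands in $T_{q,f}\cW$ amounts to $(I-\Pi_{\cW}(q))\bigl(\Pi_{\cW}'(q;\Pi(q)\omega)f+\Pi_{\cW}(q)\phi\bigr)=\Pi_{\cW}'(q;\Pi(q)\omega)f$, which is precisely the affine Weingarten identity $\Pi_{\cW}(q)\Pi_{\cW}'(q;\Delta_q)f=0$ of \eqref{eq:Wein}; idempotency uses the same identity together with $\Pi(q)^2=\Pi(q)$; and $\Pi_{T\cW}(q,f)$ fixes $T_{q,f}\cW$ pointwise by the defining equation of that space, so its image is exactly $T_{q,f}\cW$. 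The operator $\Pi_{T\cW^*}(q,f^*)$ is treated identically, applying \eqref{eq:Wein} to $\Pi_{\cW}^{\ft}$ (again a projection function). Once the adjoint identities below are in hand, $\Pi_{T^*\cW}$ and $\Pi_{T^*\cW^*}$ are automatically idempotent, being adjoints of idempotents.

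The computational heart — and the step I expect to be the main obstacle — is the pair of adjoint identities. For $\Pi_{T\cW}(q,f)$ I would pair $\Pi_{T\cW}(q,f)(\omega,\phi)$ against an arbitrary $(p_q,p_f)$ in the direct-sum inner product on $\cE\times\cF$, move $\Pi(q)$ and $\Pi_{\cW}(q)$ across via their transposes, and rewrite the cross term $\langle p_f,\Pi_{\cW}'(q;\Pi(q)\omega)f\rangle$ using the index-raising identity \eqref{eq:PsiX} with $\Delta=\Pi(q)\omega\in T_q\cQ$; collecting terms and factoring out $\Pi(q)^{\ft}$ reproduces \eqref{eq:pisTW}. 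The same manipulation applied to $\Pi_{T\cW^*}$ yields $\Pi_{T^*\cW^*}$, the only difference being that the cross term now carries $\Pi_{\cW}'(q;\cdot)^{\ft}f^*$, the derivative of the projection function $\Pi_{\cW}^{\ft}$ of $\cW^*$. The delicate points are purely bookkeeping: since $\Pi$ and $\Pi_{\cW}$ are not self-adjoint, every transpose must be tracked with care; each resulting component must be confirmed to lie in $\cE$, resp.\ $\cF$, as asserted; and one notes that the $\{X\mapsto\cdots\}^{\ft}$ expression, though determined only up to the orthogonal complement of $T_q\cQ$, is made unambiguous by the $\Pi(q)^{\ft}$ in front of it, so the formulas are well posed.

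Finally, with $\Pi_{T^*\cW}(q,f)=\Pi_{T\cW}(q,f)^{\ft}$ established, this operator is smooth in $(q,f)$ and of rank equal to that of $\Pi_{T\cW}(q,f)$, namely $\dim T_{q,f}\cW=\dim\cW$, independent of $(q,f)$. Applying \cref{lem:proj2}(2) with $\cW$ in the role of $\cQ$ and $\cE\times\cF$ in the role of $\cF$ then exhibits the locus $\{\Pi_{T^*\cW}(q,f)(p_q,p_f)=(p_q,p_f)\}$ over $\cW$ as a subbundle identified with $T^*\cW$. Its fibre over $(q,f)$ is $\Imag\Pi_{T^*\cW}(q,f)$; reading \eqref{eq:pisTW}, as $\omega$ varies the first slot ranges over all of $\Imag\Pi(q)^{\ft}$ (the index-raising correction being absorbable into $\omega$) while the second slot ranges independently over $\Imag\Pi_{\cW}(q)^{\ft}$, which is exactly \eqref{eq:TsW}. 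The identical argument for $\cW^*$ — where the role of $\Pi_{\cW}$ is played by $\Pi_{\cW}^{\ft}$, so the second slot ranges over $\Imag\Pi_{\cW}(q)$ — yields \eqref{eq:TsWs}, completing the proof.
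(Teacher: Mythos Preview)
Your proposal is correct and follows essentially the same route as the paper: differentiate the defining relation $\Pi_{\cW}(q)f=f$ and exhibit the curve $t\mapsto\Pi_{\cW}(q(t))(f+t\Delta_f)$ for \eqref{eq:TW}, use the affine Weingarten lemma to check $\Pi_{T\cW}$ is a projection onto $T_{q,f}\cW$, compute the adjoint by pairing against an arbitrary $(\omega_1,\phi_1)$, and read off the cotangent fibre as $\Imag\Pi_{T\cW}(q,f)^{\ft}$. The only cosmetic difference is that the paper closes the last step with a dimension count rather than your direct description of the image, but the content is the same.
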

\begin{proof}Differentiate the defining equation $\Pi(q)f = f$ with variables $(q, f)$ in direction $(\Delta_q, \Delta_f)\in T_{q, f}\cW$, gives us the constraints on the right-hand side of \cref{eq:TW}. Conversely, assume the constraint $\Delta_f = \Pi'(q, \Delta_q)f + \Pi(q)\Delta_f$ is satisfied for $\Delta_q \in T\cQ, \Delta_f\in\cF$. Consider a curve $\gamma(t)$ on $\cQ$ with $\gamma(0) = q$ and $\dot{\gamma}(0) = \Delta_q$. Let $\beta$ be the curve on $\cW$ defined by $\beta(t) = (\gamma(t), \Pi(\gamma(t))(f+t\Delta_f))$, then
  $$\dot{\beta}(0) = (\Delta_q, \Pi'(q, \Delta_q)f + \Pi(q)\Delta_f) = (\Delta_q, \Delta_f),$$
therefore $(\Delta_q, \Delta_f)\in T_qW$. This shows \cref{eq:TW} characterizes $T\cW$. Replace $\cW$ with $\cW^*$, \cref{eq:TWs} characterizes $T\cW^*$. We verify $\Pi_{T\cW}(q, f)(\omega, \phi)\in T_{q, f}\cW$, observing $\Pi(q)\omega \in T_q\cQ$ and
  $$\Pi_{\cW}'(q; \Pi(q)\omega)f + \Pi_{\cW}(q)\{\Pi_{\cW}'(q; \Pi(q)\omega)f + \Pi_{\cW}(q)\phi\} = \Pi_{\cW}'(q; \Pi(q)\omega)f + \Pi_{\cW}(q)\phi
  $$
  as $\Pi_{\cW}(q)\Pi_{\cW}'(q; \Pi(q)\omega)f=0$ by \cref{eq:Wein}. For $(\Delta_q, \Delta_f)\in T_{q, f}\cW$ we have
      $$  \Pi_{T\cW}(q, f)(\Delta_q, \Delta_f) =   (\Pi(q)\Delta_q, \Pi_{\cW}'(q; \Pi(q)\Delta_q))f + \Pi_{\cW}(q)\Delta_f) = (\Delta_q, \Delta_f)
  $$
  by the defining equation \cref{eq:TW}. This confirms $\Pi_{\cW}$ is a projection function to $T\cW$. Let $\omega_1\in\cE, \phi_1\in\cF$, then
  $$\begin{gathered}\Pi_{T\cW}(q, f)(\omega, \phi) . (\omega_1, \phi_1) = \Pi(q)\omega . \omega_1 +
    \Pi_{\cW}'(q; \Pi(q)\omega)f .\phi_1 + \Pi_{\cW}(q)\phi .\phi_1\\
    =\omega .\Pi(q)^{\ft}\omega_1 + \Pi(q)\omega .\{\Delta\mapsto \Pi_{\cW}'(q; \Delta)f \}^{\ft} \phi_1 +\phi. \Pi_{\cW}(q)^{\ft}\phi_1
  \end{gathered}$$
  which gives us the expression for $\Pi_{T\cW}(q, f)^{\ft}(\omega_1, \phi_1)$, and the expression for $\Pi_{T\cW^*}(q, f)^{\ft}$ follows similarly.

For \cref{eq:TsW}, from \cref{eq:pisTW}, $\Imag(\Pi_{T\cW}(q, f)^{\ft})\subset T_q^*\cQ\times \cW^*$, the equality follows from a dimension count. Replace $\cW$ with $\cW^*$ we get \cref{eq:TsWs}.
\end{proof}
\begin{remark}For $\cW=T\cQ$, if $\sfg$ is a {\it metric operator}, that means for $q\in\cQ$, $\sfg(q)$ is an invertible, symmetric operator on $\cE$, and {\it the pairing between $\sfg(q)T_q\cQ$ and $T_q\cQ$ is nondegenerate}, the projection function in \cref{lem:proj2} is called $\Pi_{\sfg}$. If $\cQ$ is defined by a constraint $\rC(q)=0$ where $\rC$ is a map from $\cE$ to $\cE_L$ of full rank then $\rC'(q)\sfg(q)^{-1}\rC'(q)^{\ft}$ is invertible, as if $\rC'(q)\sfg(q)^{-1}\rC'(q)^{\ft}a=0$ then $\sfg(q)^{-1}\rC'(q)^{\ft}a\in T_\cQ$, and
  $$\sfg(q)\eta .\sfg(q)^{-1}\rC'(q)^{\ft}a=\eta .\rC'(q)^{\ft}a=\rC'(q)\eta .a=0$$
  for all $\eta\in T_q\cQ$, thus $\sfg(q)^{-1}\rC'(q)^{\ft}a=0$, hence $\rC'(q)^{\ft}a=0$ and $a=0$ by the full-rank assumption. Note, {\it $\sfg$ is not assumed to be definite}. Thus, we can take
  \begin{equation}\Pi_{\sfg}(q)\omega = \omega - \sfg(q)^{-1}\rC'(q)^{\ft}(\rC'(q)\sfg(q)^{-1}\rC'(q)^{\ft})^{-1}\rC'(q)\omega\label{eq:ProjG}\end{equation}
as we see $\rC'(q)\Pi_{\sfg}(q)=0$ and $\sfg(q)\Pi_{\sfg}(q) = \Pi_{\sfg}(q)^{\ft}\sfg(q)$, identifying $\Imag(\Pi(q))$ with $\sfg(q)T_q\cQ$. A special case is when $\sfg(q)$ is the identity map, in this case $\Pi=\Pi^{\ft}$. This projection appears often in applications. The condition $\sfg(q)\Pi_{\sfg}(q) = \Pi_{\sfg}(q)^{\ft}\sfg(q)$ is called {\it metric compatible}.

Alternatively, for a local chart $(U, \phi)$ for $U\subset \cQ$, $\phi$ maps $U$ to an open subset of $\R^{d}$ ($d=\dim\cQ)$ then $\phi'(q)$ maps $T_q\cQ$  to $\R^{d}$. If we define $\psi(q)b=\phi'(q)^{-1}b\in T_q\cQ\subset\cE$ for $b\in\R^d$ and consider $\psi(q)$ as a map from $\R^d$ to $\cE$, with range $T_q\cQ$, then $\psi(q)^{\ft}$ maps $\cE$ to $\R^d$. The projection is given by
\begin{equation}\Pi_{\sfg}(q)\omega = \psi(q)(\psi(q)^{\ft}\sfg(q)\psi(q))^{-1}\psi(q)^{\ft}\sfg(q)\omega\label{eq:ProjG2}\end{equation}
as we can easily check $\psi(q)^{\ft}\sfg(q)\psi(q)$ is invertible if $\psi$ is of full rank and $\Pi_{\sfg}$ is onto $T_q\cQ$. A change of coordinate changes $\psi(q)$, but does not change $\Pi_{\sfg}$.

We make two observations that will be useful in \cref{sec:KimMcCann}. We note the restriction of a non-degenerate non-definite pairing $\sfg$ on $\cE$ to a subspace $\cV$ is nondegenerate if and only if there is a $\sfg$-compatible projection to $\cV$.  The arguments above prove the ``only if'' part. The ``if'' part is clear as if for $v\in\cV$, $v.\sfg v_1 = 0$ for all $v_1\in \cV$ then $v. \sfg e = v. \sfg \Pi e =0$ for all $e\in\cE$, where $\Pi$ is the metric-compatible projection. This shows $v=0$.

Also, let $\cV^{\perp_{\sfg}}$ to be the subspace of vectors orthogonal to $\cV$, then in the non-definite case $\cV^{\perp_{\sfg}}\cap \cV$ could be nonzero. However, the restriction of $\sfg$ to $\cV$ is nondegenerate if and only if $\cV^{\perp_{\sfg}}\cap \cV=\{0\}$, and this is equivalent to the restriction to $\cV^{\perp_{\sfg}}$ is nondegenerate.

If $\cQ$ is the unit sphere $q^{\ft}q = 1$ in $\cE$, and $\mathsf{a}\in\cE$ with $\|\mathsf{a}\| < 1$, then
  \begin{equation}\label{eq:PiExotic}
\Pi_{\sfa}(q):\omega\mapsto    \omega - (q + \sfa)\frac{q^{\ft}\omega}{1+q^{\ft} \sfa}
  \end{equation}
is a more exotic example of a projection to the tangent bundle, projecting $\omega$ to $T_q\cQ$ along direction $q+\sfa\neq 0$, with adjoint
$$  \Pi_{\sfa}(q)^{\ft}:\omega\mapsto    \omega - q\frac{(q+\sfa)^{\ft}\omega}{1+q^{\ft}\sfa}.$$
The cotangent space $T^*_q\cQ$ corresponds to $p\in\cE$ satisfying $(q+\sfa)^{\ft}p=0$.
\end{remark}  

\section{Tangent, cotangent bundle, and Hamiltonian mechanics}\label{sec:Hamechanics}
Consider a submanifold $\cQ\subset \cE$. We will fix a smooth projection function $\Pi$ onto $T\cQ$, and consider the embedding of $T^*\cQ\subset \cE^2$ defined by $\Pi^{\ft}$. We will express geometric and mechanical quantities on $\cQ$ using $\Pi$. Our main differential geometry reference is \cite{Michor}.

A {\it vector field} is a section of $T\cQ$, it could be considered as a function $\ttX$  from $\cQ$ to $\cE$ such that $\Pi(q)\ttX(q) = \ttX(q)$. For two vector fields $\ttX, \ttY$, the {\it Lie bracket} is
\begin{equation}[\ttX, \ttY] = \rD_{\ttX}\ttY - \rD_{\ttY}\ttX.\label{eq:lie_bracket}
\end{equation}
Here, at $q\in\cQ$, $\rD_{\ttX}\ttY$ is the directional derivative of $\ttY$ in direction $\ttX(q)$ evaluated at $q$, where $\ttY$ is considered as a map from $\cQ$ to $\cE$, so its directional derivative makes sense as an $\cE$-valued function, and similarly for $\rD_{\ttY}\ttX$. Since $\cQ$ is a submanifold, $[\ttX, \ttY]$ is a vector field, $\Pi[\ttX, \ttY] = [\ttX, \ttY]$.
\begin{lemma}[Torsion-free]\label{lem:torsionfree}Let $\xi, \eta\in T_q\cQ$ at fixed $q\in\cQ$. Define extensions of $\xi$ and $\eta$ to vector fields on $\cQ$ by $v_{\xi}(x)=\Pi(x)\xi, v_{\eta}(x) = \Pi(x)\eta$, $x\in\cQ$. Then
  \begin{equation}[v_{\xi}, v_{\eta}]_{x=q} =  \Pi'(q, \xi)\eta - \Pi'(q, \eta)\xi=0.
\label{eq:TorsionFree}\end{equation}
\end{lemma}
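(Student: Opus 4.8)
The plan is to compute $[v_\xi, v_\eta]$ at $q$ directly from the definition \cref{eq:lie_bracket}, using the product rule on directional derivatives, and then invoke the Weingarten lemma \cref{eq:Wein} to kill the leftover terms. First I would write out $\rD_{v_\xi}v_\eta$ at $q$: since $v_\eta(x) = \Pi(x)\eta$ and $v_\xi(q) = \Pi(q)\xi = \xi$ (because $\xi \in T_q\cQ$), the chain rule gives $\rD_{v_\xi}v_\eta\big|_{x=q} = \Pi'(q; v_\xi(q))\eta = \Pi'(q; \xi)\eta$. Symmetrically, $\rD_{v_\eta}v_\xi\big|_{x=q} = \Pi'(q; \eta)\xi$. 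Subtracting yields $[v_\xi, v_\eta]_{x=q} = \Pi'(q;\xi)\eta - \Pi'(q;\eta)\xi$, which is the first claimed equality.

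For the second equality — that this expression vanishes — I would differentiate the symmetry relation $\Pi(x)\Pi(x) = \Pi(x)$ is not quite what is needed; instead the relevant identity is that $\Pi(x)\xi$ and $\Pi(x)\eta$ are honest vector fields on the submanifold $\cQ$, so their bracket $[v_\xi, v_\eta]$ lies in $T_q\cQ$, i.e. $\Pi(q)[v_\xi,v_\eta]_{x=q} = [v_\xi,v_\eta]_{x=q}$. On the other hand, applying the Weingarten lemma \cref{eq:Wein} with $f = \xi$ and $\Delta_q = \eta$ (valid since $\xi \in T_q\cQ = \cW_q$) gives $\Pi(q)\Pi'(q;\eta)\xi = 0$, and likewise $\Pi(q)\Pi'(q;\xi)\eta = 0$. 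Hence $\Pi(q)[v_\xi,v_\eta]_{x=q} = 0$, and combined with the previous sentence this forces $[v_\xi,v_\eta]_{x=q} = 0$.

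The one subtlety I would be careful about is the claim that $[v_\xi, v_\eta]$ is tangent to $\cQ$. This is standard — the Lie bracket of two vector fields tangent to an embedded submanifold is again tangent — but in the global-coordinate formalism it should be stated via $\Pi[\ttX,\ttY] = [\ttX,\ttY]$, which the text has already asserted just before the lemma; so I would simply cite that. Alternatively, one can avoid appealing to tangency altogether: a purely computational route is to differentiate the idempotency relation $\Pi'(q;\xi)\Pi(q)\eta + \Pi(q)\Pi'(q;\xi)\eta = \Pi'(q;\xi)\eta$ and its $\xi \leftrightarrow \eta$ counterpart, then subtract. Since $\Pi(q)\eta = \eta$ and $\Pi(q)\xi = \xi$, the two identities read $\Pi'(q;\xi)\eta + \Pi(q)\Pi'(q;\xi)\eta = \Pi'(q;\xi)\eta$ and $\Pi'(q;\eta)\xi + \Pi(q)\Pi'(q;\eta)\xi = \Pi'(q;\eta)\xi$, i.e. $\Pi(q)\Pi'(q;\xi)\eta = 0$ and $\Pi(q)\Pi'(q;\eta)\xi = 0$ — recovering Weingarten — but this alone does not yet give $\Pi'(q;\xi)\eta = \Pi'(q;\eta)\xi$. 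The genuinely needed input is a second-derivative symmetry: differentiating $\Pi(x)\,\Pi(x)\eta_x$-type expressions, or more transparently using that for the natural extension the bracket must land in the tangent space, which is exactly the torsion-freeness being asserted. So the main obstacle — really the only content — is justifying that $\Pi'(q;\xi)\eta - \Pi'(q;\eta)\xi$ has no component off $T_q\cQ$, which is immediate from $\Pi[v_\xi,v_\eta] = [v_\xi,v_\eta]$ plus Weingarten, and has no component on $T_q\cQ$ either by the same Weingarten computation; hence it is zero.
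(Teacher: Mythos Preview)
Your proposal is correct and follows essentially the same argument as the paper: compute the bracket via \cref{eq:lie_bracket} to get $\Pi'(q;\xi)\eta - \Pi'(q;\eta)\xi$, then use that the Lie bracket of tangent vector fields is tangent together with the Weingarten lemma \cref{eq:Wein} to conclude the expression equals its own $\Pi(q)$-image, which vanishes. The paper's proof is just the streamlined version of your first two paragraphs; your subsequent exploration of alternative routes is unnecessary but harmless.
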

\begin{proof}By \cref{eq:lie_bracket}, $[v_{\xi}, v_{\eta}](q) =  \Pi'(q, \xi)\eta - \Pi'(q, \eta)\xi$. Since $[v_{\xi}, v_{\eta}](q)\in T_q\cQ$
  $$\Pi'(q, \xi)\eta - \Pi'(q, \eta)\xi = \Pi(q)(\Pi'(q, \xi)\eta - \Pi'(q, \eta)\xi) = 0$$
using the Weingarten lemma \cref{eq:Wein}.
\end{proof}
A corollary is if $(q, v, \Delta_q, \Delta_v)\in TT\cQ\subset\cE^4$ then $(q, \Delta_q, v, \Delta_v)\in TT\cQ\subset\cE^4$ as
$$\Delta_v = \Pi(q)\Delta_v + \Pi'(q;\Delta_q)v =\Pi(q)\Delta_v + \Pi'(q;v)\Delta_q.$$
Thus, the map $(q, v, \Delta_q, \Delta_v)\mapsto (q, \Delta_q, v, \Delta_v)$ maps $T_{q, v}T\cQ$ to $T_{q,\Delta_q}T\cQ$ is a bundle map, the {\it canonical flip} \cite[section III.8.13]{Michor} of $TT\cQ$.

A {\it $1$-form} on $\cQ$ is a section of $T^*\cQ$, or a function $\ttP:\cQ\to\cE$ satisfying
\begin{equation} \Pi^{\ft}(q)\ttP(q) = \ttP(q).\end{equation}
In this setting, the differential $df$ of a scalar function $f$ on $\cQ$ is represented as $q\mapsto\Pi(q)^{\ft}\egrad_f(q)$, with $\egrad_f$ is the gradient of $f$ as a function on $\cE$, as for a tangent vector $\xi\in T_q\cQ$
$$\Pi(q)^{\ft}\egrad_f(q).\xi = \egrad_f(q).\xi = f'(q;\xi) = df(q).\xi.$$

A {\it two-form} $\omega$ is an anti-symmetric pairing of two vector fields on $\cQ$. Identifying an $1$-form $\ttP$ and two  vector fields $\ttX, \ttY$ with functions from $\cQ$ to $\cE$, the {\it exterior derivative} \cite[III.9.8]{Michor} $d\ttP$ is a {\it two-form} evaluates on $\ttX, \ttY$ as
$$\begin{gathered} d\ttP(\ttX, \ttY) = \rD_{\ttX}(\ttP .\ttY) - \rD_{\ttY}(\ttP.\ttX) -\ttP([\ttX, \ttY])\\
 = (\rD_{\ttX}\ttP) .\ttY + \ttP. (\rD_{\ttX}\ttY) -
 (\rD_{\ttY}\ttP).\ttX - (\rD_{\ttX}\ttP).\ttY -\ttP.(\rD_{\ttX}\ttY - \rD_{\ttY}\ttX).
\end{gathered}$$
\begin{equation}\label{eq:dP}
d\ttP(\ttX, \ttY) = (\rD_{\ttX}\ttP) .\ttY - (\rD_{\ttY}\ttP).\ttX.
\end{equation}

With $\cW= T\cQ$, a vector field $\Delta$ on the manifold $T^*\cQ$ could be identified with a function $\Delta = (\Delta_q, \Delta_p)$ from $T^*\cQ$ to $\cE^2$. At $(q, p)\in T^*\cQ$, \cref{eq:TWs} requires
$$\Pi(q)\Delta_q(q, p) = \Delta_q(q, p);\quad\quad \Pi(q)^{\ft}\Delta_p(q, p) + \Pi'(q, \Delta_q)^{\ft}p = \Delta_p(q, p).
$$
A $1$-form on $T^*\cQ$ could be identified with a function $\ttP = (\ttP_q, \ttP_p)$ from $T^*\cQ$ to $\cE^2$ such that $(q, p, \ttP_q(q, p), \ttP_p(q, p))\in\cE^4$ satisfying \cref{eq:TsWs}, or $\ttP_q\in T_q^*\cQ, \ttP_p\in T_q\cQ$. The quadruple $(q, p, p, 0)$ satisfies this condition, and the {\it tautological 1-form} on $T^*\cQ$ corresponds to
\begin{equation} \theta:(q, p)\mapsto (p, 0)\in T_{q, p}^*T^*\cQ\subset\cE^2\end{equation}
which evaluates on a vector field $\ttX = (\ttX_q, \ttX_p)$ on $T^*\cQ$ as $(q, p)\mapsto p. \ttX_q(q, p)$. For two vector fields $\ttX = (\ttX_q, \ttX_p), \ttY = (\ttY_q, \ttY_p)$ on $T^*\cQ$, from \cref{eq:dP}, the {\it Poincar{\'e} $2$-form} is
$$\Omega(\ttX, \ttY) = -d\theta(\ttX, \ttY) = -(\rD_{\ttX_q, \ttX_p}(p, 0)).(\ttY_q, \ttY_p) + (\rD_{\ttY_q, \ttY_p}(p, 0)). (\ttX_q, \ttX_p) 
$$
  \begin{equation}
\Omega(\ttX, \ttY) = - \ttX_p.\ttY_q + \ttX_q.\ttY_p = \ttX_q.\ttY_p - \ttX_p.\ttY_q.
  \end{equation}
Thus, $\Omega$ is induced by the Poincar{\'e} $2$-form on $\cE^2$. We prove below the {\it symplectic pairing} $\Omega$ is nondegenerate on $T^*\cQ$, as well-known.
\begin{theorem}[Hamilton vector field]\label{thm:HVF} Assume $\cQ$ is embedded in $\cE$ and $T\cQ\subset \cQ\times\cE$ is represented by a projection function $\Pi$. For $(h_q, h_p)\in \cE^2$, set
  \begin{equation}\begin{gathered}e_q = \Pi(q)h_p,\\
e_p = \Pi'(q; e_q)^{\ft}p - \Pi(q)^{\ft}\{h_q +  \{X\mapsto \Pi'(q; X)^{\ft}p\}^{\ft}h_p \}
    \end{gathered}\label{eq:etangent}
  \end{equation}
then $(e_q, e_p)\in T_{q, p}T^*\cQ$ and for all $(\xi_q, \xi_p)\in T_{q, p}T^*\cQ$
\begin{equation}
(h_q, h_p).(\xi_p, \xi_q) =
h_q.\xi_q + h_p.\xi_p = e_q. \xi_p - e_p . \xi_q
= \Omega((e_q, e_p), (\xi_p, \xi_q)).\label{eq:OmgProj}
\end{equation}
Restricted to $T_{q, p}^*T^*\cQ$, the map $(h_q, h_p)\mapsto (e_q, e_p)$ in \cref{eq:etangent} has an inverse
\begin{equation} (e_q, e_p)\mapsto(s_q, s_p) := (-\Pi(q)^{\ft}e_p, e_q) \in T_{q, p}^*T^*\cQ\label{eq:OmgInv}
\end{equation}
hence, $(e_q, e_p)$ is the unique element in $T_{q, p}T^*\cQ$ satisfying \cref{eq:OmgProj}.

Thus, for a (Hamiltonian) function $H$ on $T^*\cQ$, extended to a neighborhood in $\cE^2$ with gradient $(H_q, H_p)$, the unique vector field on $T^*\cQ$ satisfying $\Omega(\ttX_H, \ttY) = \rD_{\ttY} H$ for all vector field $\ttY$ on $T^*\cQ$ is the Hamilton vector field below
\begin{equation}\label{eq:XH}
\ttX_H(q, p) =\begin{bmatrix}\Pi(q)H_p\\
    \Pi'(q; \Pi(q)H_p)^{\ft}p - \Pi(q)^{\ft}\{H_q +  \{X\mapsto \Pi'(q; X)^{\ft}p\}^{\ft}H_p \}\end{bmatrix}.
\end{equation}
  \end{theorem}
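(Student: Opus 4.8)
The plan is to verify the three claims of the theorem in order: (i) that $(e_q,e_p)$ defined by \cref{eq:etangent} actually lies in $T_{q,p}T^*\cQ$; (ii) the pairing identity \cref{eq:OmgProj}; and (iii) that the map $(h_q,h_p)\mapsto (e_q,e_p)$, restricted to $T^*_{q,p}T^*\cQ$, is invertible with the stated inverse, which then forces uniqueness. The final assertion about $\ttX_H$ is then immediate: apply the identity with $(h_q,h_p)=(H_q,H_p)$, noting $\rD_{\ttY}H = (H_q,H_p).(\ttY_q,\ttY_p)$ and that the symplectic form is $\Omega((e_q,e_p),(\xi_q,\xi_p)) = e_q.\xi_p - e_p.\xi_q$, matching the right-hand side of \cref{eq:OmgProj} after the index swap $(\xi_q,\xi_p)\leftrightarrow(\xi_p,\xi_q)$ appearing there.

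For (i), I would invoke \cref{eq:TWs} with $\cW=T\cQ$: a pair $(\Delta_q,\Delta_p)$ is in $T_{q,p}T^*\cQ$ iff $\Pi(q)^{\ft}\Delta_q = \Delta_q$ (wait — here the base-direction component must be tangent) — more precisely, by the discussion following \cref{eq:TWs}, $(e_q,e_p)\in T_{q,p}T^*\cQ$ iff $\Pi(q)e_q = e_q$ and $\Pi'(q;e_q)^{\ft}p + \Pi(q)^{\ft}e_p = e_p$. The first holds since $e_q=\Pi(q)H_p$ and $\Pi$ is idempotent. For the second, substitute the definition of $e_p$ and expand $\Pi(q)^{\ft}e_p$; the term $\Pi(q)^{\ft}\Pi'(q;e_q)^{\ft}p$ vanishes by the affine Weingarten identity \cref{eq:WeinX} (with $v=e_q\in T_q\cQ$), and $\Pi(q)^{\ft}\Pi(q)^{\ft}(\cdots) = \Pi(q)^{\ft}(\cdots)$ by idempotency, so the constraint collapses to an identity. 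This step is routine once the Weingarten lemma is in hand.

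For (ii), the key input is again the description of $T_{q,p}T^*\cQ$: for $(\xi_q,\xi_p)$ there, $\Pi(q)\xi_q = \xi_q$ and $\xi_p = \Pi(q)^{\ft}\xi_p + \Pi'(q;\xi_q)^{\ft}p$. I expand $e_q.\xi_p - e_p.\xi_q$ using these relations and the definitions, moving $\Pi(q)$ and $\Pi(q)^{\ft}$ across the inner product via adjointness, and using $\Pi(q)\xi_q=\xi_q$ to rewrite $H_q.\xi_q = \Pi(q)^{\ft}(\cdots).\xi_q$ etc. The genuinely substantive manipulation is handling the index-raised term $\{X\mapsto\Pi'(q;X)^{\ft}p\}^{\ft}H_p$: by its defining property \cref{eq:PsiX}, pairing it against $\xi_q\in T_q\cQ$ returns $H_p.\Pi'(q;\xi_q)^{\ft}p$ — wait, it returns $p.\Pi'(q;\xi_q)H_p$ — which cancels against the corresponding cross term coming from $e_q.\xi_p$ via $\xi_p = \Pi(q)^{\ft}\xi_p + \Pi'(q;\xi_q)^{\ft}p$. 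After these cancellations the expression reduces to $H_q.\xi_q + H_p.\xi_p$. I expect this bookkeeping — keeping straight which vectors are constrained to be tangent versus cotangent, and applying \cref{eq:PsiX} in the right slot — to be the main obstacle, though it is conceptually just careful adjoint-shuffling; the Weingarten identity is what makes the stray non-tangent pieces disappear.

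For (iii), I note that \cref{eq:OmgProj} says $(e_q,e_p)$ represents the functional $(\xi_q,\xi_p)\mapsto \Omega((e_q,e_p),(\xi_q,\xi_p)) = (h_q,h_p).(\xi_q,\xi_p)$ on $T_{q,p}T^*\cQ$. Since $\Omega$ is induced from the nondegenerate Poincaré form on $\cE^2$ and $T_{q,p}T^*\cQ$ is the kind of subspace for which \cref{lem:higherTangent} supplies a compatible projection, nondegeneracy of $\Omega$ on $T_{q,p}T^*\cQ$ follows; hence the element of $T_{q,p}T^*\cQ$ representing a given functional is unique, giving uniqueness of $(e_q,e_p)$ and in particular of $\ttX_H$. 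To exhibit the inverse concretely, I would check directly that feeding $(s_q,s_p) = (-\Pi(q)^{\ft}e_p,\ e_q)$ — which lies in $T^*_{q,p}T^*\cQ$ by \cref{eq:TsWs}, using $\Pi(q)e_q=e_q$ and $\Pi(q)^{\ft}(\Pi(q)^{\ft}e_p)=\Pi(q)^{\ft}e_p$ — back through \cref{eq:etangent} as $(h_q,h_p)$ returns $(e_q,e_p)$, again collapsing the Weingarten terms and using idempotency; this also re-derives, and is consistent with, the representation of $df$ as $q\mapsto \Pi(q)^{\ft}\egrad_f$. Assembling (i)–(iii) yields \cref{eq:XH}.
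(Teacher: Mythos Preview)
Your proposal is correct and follows essentially the same route as the paper: verify membership in $T_{q,p}T^*\cQ$ via the constraint in \cref{eq:TWs}, establish \cref{eq:OmgProj} by expanding and using the Weingarten lemma plus the defining property \cref{eq:PsiX} of the index-raised term, and confirm the explicit inverse \cref{eq:OmgInv} by direct substitution. Two small corrections: in step (i), the vanishing of $\Pi(q)^{\ft}\Pi'(q;e_q)^{\ft}p$ is \cref{eq:Wein} applied to the projection $\Pi^{\ft}$ onto $T^*\cQ$, not \cref{eq:WeinX}; and in step (iii), your assertion that nondegeneracy of $\Omega$ on $T_{q,p}T^*\cQ$ follows merely from the existence of a compatible projection is not a valid inference (a nondegenerate form can degenerate on a subspace), so you should rely on your explicit inverse check, exactly as the paper does, to conclude that if the $\Omega$-pairing with every $(\xi_q,\xi_p)$ vanishes then $(s_q,s_p)=0$ and hence $(e_q,e_p)=0$.
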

  \begin{proof}From \cref{eq:TWs}, $(q, p, e_q, e_p)\in T_{q, p}T^*\cQ$. For \cref{eq:OmgProj}
    $$\begin{gathered}e_q. \xi_p - e_p . \xi_q = \Pi(q)h_p .\xi_p -
(\Pi'(q; e_q)^{\ft}p - \Pi(q)^{\ft}\{h_q +  \{X\mapsto \Pi'(q; X)^{\ft}p\}^{\ft}h_p \}).\xi_q\\
      = h_p . \Pi(q)^{\ft} \xi_p +\Pi(q)^{\ft} h_q . \xi_q + \{X\mapsto \Pi'(q; X)^{\ft}p\}^{\ft}h_p .\Pi(q)\xi_q\\
      = h_p . \Pi(q)^{\ft} \xi_p + h_q . \xi_q + h_p.\Pi'(q; \Pi(q)\xi_q)^{\ft}p =h_p .\xi_p + h_q .\xi_q
    \end{gathered}$$
    where we use $\Pi'(q; e_q)^{\ft}p .\xi_q = p . \Pi'(q; e_q)\xi_q = 0$ by \cref{eq:Wein}, then the definition of $\{X\mapsto \Pi'(q; X)^{\ft}p\}^{\ft}h_p$ next, and $\xi_p = \Pi(q)^{\ft} \xi_p + \Pi'(q; \xi_q)^{\ft}p$ last.
    
    Conversely, given $(e_q, e_p)\in T_{q, p}T^*\cQ$, substitute \cref{eq:OmgInv} to the right-hand side of \cref{eq:etangent}, we recover $e_q, e_p$, as $\Pi(q)s_p = s_p = e_q$, while
using \cref{eq:WeinX,eq:Wein} and the defining equation of $T_{q, p}T^*\cQ$
$$\begin{gathered}\Pi'(q; e_q)^{\ft}p - \Pi(q)^{\ft}\{s_q + \{X\mapsto \Pi'(q; X)^{\ft}p\}^{\ft}s_p \} = \Pi'(q; e_q)^{\ft}p + \Pi(q)^{\ft}e_p = e_p\end{gathered}.$$
Thus, if the $\Omega$-pairing in \cref{eq:OmgProj} is zero for all $(\xi_q, \xi_p)\in T_{q, p}T^*\cQ$ then $(s_q, s_p)\in T_{q, p}^*T^*\cQ$ from \cref{eq:OmgInv} is zero, hence $(e_q, e_p)=0$, therefore $\Omega$ is  nondegenerate. The statement on $\ttX_H$ now follows from the definition of the gradient $(H_q, H_p)$.
  \end{proof}  
  The flow equation of the Hamilton vector field $\ttX_H$ is the {\it Hamilton equation}
  \begin{equation} (\dot{q}, \dot{p}) = \ttX_H(q, p).\label{eq:HamiltonFlow}\end{equation}
  \begin{remark}Lagrange mechanics is well-covered in \cite{LeeLeokM}. For Hamilton mechanics, the authors use the notation $\{\frac{\partial P^T(x)\mu}{\partial x} \}^T$ for our
    $\{X\mapsto \Pi'(x, X)^{\ft}p\}^{\ft}$. In equation 2.8 ibid., the term
    $P(x)^T\{\frac{\partial P^T(x)\mu}{\partial x} \}^TP(x)$ is zero by \cref{eq:WeinX} and the condition $\Pi=\Pi^{\ft}$ assumed there is not needed. The extension of a metric tensor to an operator in \ref{appx:Extend} satisfies $\Pi=\Pi^{\ft}$, however, it is convenient to work without this restriction.
  \end{remark}
\begin{example}For the sphere, to use the projection \cref{eq:PiExotic} in \cref{eq:XH}, note
  $$
    \begin{gathered}
      \Pi_{\sfa}'(q; X)^{\ft}p = -q \frac{ X^{\ft} p}{1 + q^{\ft}\sfa}\;\text{ for } X\in T_q\cQ,\\
      \{X\mapsto \Pi_{\sfa}'(q, X)^{\ft}p\}^{\ft}\omega = - \frac{\omega . q}{1 + q^{\ft}\sfa}p.\end{gathered}$$
from $\omega. \Pi_{\sfa}'(q, X)^{\ft}p =X . \frac{\omega . q}{1 + q^{\ft}\sfa}p$. When $\sfa =0$, $\Pi_{\sfa}(q) = \Pi_{\sfa}(q)^{\ft} = I - qq^{\ft}$, we recover \cite[equation 3.4]{LeeLeokM}
\begin{equation}\ttX_H = ((I - qq^{\ft}) H_p(q, p), -(I - qq^T)H_q(q, p) +(pq^{\ft} - qp^{\ft})H_p(q, p)).\label{eq:Hamiltonsphere}
\end{equation}    
\end{example}

\section{Affine connections and curvature of embedded manifolds}\label{sec:curvature}
The connection here is described in terms of a bundle map. As in \cite[Chapter 4]{Michor}, we describe a connection as a split of $T\cW$ to subbundles related to the sequence in \cref{eq:exact_seq} below. Let $\pi_T:T\cQ\to\cQ$, $\pi_T(q, \Delta_q) = q$ for $q\in \cQ, \Delta_q\in T_q\cQ$ be the tangent bundle projection. Let $\cW$ be a subbundle of $\cQ\times \cF$ for an inner product space $\cF$, and $\pi_{\cW}:\cW \to\cQ$ be the vector bundle projection $\pi_{\cW}(q, f) = q$ for $f\in \cW_q$. Define two (pullback) bundles over $\cW$, $\pi_{\cW}^*\cW$ and $\pi_T^*\cQ$
\begin{gather}
  \pi_{\cW}^*\cW :=\{(q, f, w)| q\in\cQ, (f, w)\in W_q^2 \subset \cF^2\},\\
  \pi_T^* T\cQ := \{(q, f, \Delta_q)|q\in\cQ, f\in W_q, \Delta_q\in T_q\cQ \}.
\end{gather}
We have a short exact sequence \cite[Vol. I, chap. 3, ex. 29]{Spivak} of bundles over $\cW$
\begin{equation}\label{eq:exact_seq}
  0\to\pi_{\cW}^*\cW\xrightarrow{\imath_{\cW}} T\cW\xrightarrow{\bar{\pi}} \pi_T^* T\cQ\to 0
\end{equation}
where $\imath_{\cW}$ maps $(q, f, w)$ to $(q, f, 0, w)\in T_{q, f}\cW$, and $\bar{\pi}$ maps $(q, f, \Delta_q, \Delta_f)$ satisfying \cref{eq:TW} to $(q, f, \Delta_q)$. The short exact sequence means $\bar{\pi}$ is onto, $\imath_{\cW}$ is injective and $\Imag(\imath_{\cW}) = \Null(\bar{\pi})$, which is verified directly. From \cref{eq:TW}
$$(\Delta_q, \Delta_f) = (0, \PiW(q)\Delta_f) + (\Delta_q, \PiW'(q; \Delta_q)f)\in \Imag(\imath_{\cW; q, f}) + \Null(\rC^{\PiW}_{q, f})$$
where $\rC^{\PiW}_{q, f}:(\Delta_q, \Delta_f) \mapsto \PiW(q)\Delta_f = \Delta_f - \PiW'(q; \Delta_q)f$ maps $T_{q, f}\cW$ to $\cW_q$, and $\Null(\rC^{\PiW}_{q, f})$ consists of elements of the form $(\Delta_q, \PiW'(q; \Delta_q)f)\in T_q\cQ\times\cF$. Note, $\rC^{\PiW}$ is a left-inverse of $\imath_{\cW}$, $\rC^{\PiW}_{q, f}\circ \imath_{\cW; q, f}= Id_{(\pi_{\cW}^*\cW)_{q,f}}$. Thus, $T_{q, f}\cW$ {\it splits} to a direct sum of $\Imag(\imath_{\cW; q, f})$ and $\Null(\rC^{\PiW}_{q, f})$. Another splitting with a decomposition
$$(\Delta_q, \Delta_f) = (0, \PiW(q)\Delta_f + \mrGamma(q; \Delta_q, f)) + (\Delta_q, \PiW'(q; \Delta_q)f - \mrGamma(q; \Delta_q, f))$$
requires $\mrGamma(q; \Delta_q, f)\in \cW_q$ for the first term to be in $\Imag(\imath_{\cW; q, f})$, while the second is in the null space of $\rC_{q, f}$, a map from $T_{q, f}W$ to $W_q$ of the form
\begin{equation}\rC_{q, f}(\Delta_q, \Delta_f) = \Delta_f - \PiW'(q; \Delta_q)f +\mrGamma(q; \Delta_q, f) = \PiW(q)\Delta_f + \mrGamma(q; \Delta_q, f)\label{eq:mrGamma0}
\end{equation}
with $\mrGamma$ is linear in $\Delta_q\in T_q\cW$, and in $f$. Then $\rC_{q, f}\circ \imath_{\cW; q, f}= Id_{(\pi_{\cW}^*\cW)_{q,f}}$. We will assume smoothness for $\mrGamma$. We call $\rC$ a {\it connection map} or {\it connector}. The term
\begin{equation}\label{eq:Gamma}
\Gamma(q; \Delta_q, f) = - \PiW'(q; \Delta_q)f +\mrGamma(q; \Delta_q, f)
\end{equation}
is called a {\it Christoffel function} \cite{Edelman_1999}. Here, $\Gamma$ is defined for $(\Delta_q, f)\in T_q\cQ\times \cW_q$. We will assume it extends bilinearly to $\cE\times\cF$ (this is always possible by projecting to $T_q\cQ\times\cW_q$ before applying $\Gamma$, but it is often simpler to just extend the algebraic expressions). Thus, $q\mapsto\Gamma(q)$ is a smooth map from $\cQ$ to $\Lin(\cE\otimes\cF, \cF)$ (the space of bilinear maps from $\cE\times\cF$ to $\cF$) characterized by
\begin{equation}
\rC((q, f, \Delta_q, \Delta_f)= \Delta_f +\Gamma(q; \Delta_q, f)\in \cW_q\text{ for }(\Delta_q, \Delta_f)\in T_{q, f}\cW.
\end{equation}
 {\it The covariant derivative below does not depend on the extension of $\Gamma$ to a bilinear map from $\cE\times \cF$ to $\cE$}. In practice, for example, in the curvature calculation below, we can simplify an expression for $\Gamma$ assuming $\Delta_q\in T_q\cQ$ and $f\in \cW_q$, then extend that expression algebraically. With $\Gamma$, hence $\rC$, we can define a {\it connection}, or {\it covariant derivative}, allowing us to take derivatives of a {\it section} $s$. Identify $s$ with a map $s:\cQ\to\cW\subset\cF$ (thus, $\PiW(q)s(q) = s(q)$), so its differential $\rD s$ maps $T\cQ$ to $\cT\cW$,
$$\rD s: (q, \Delta_q)\mapsto (q, s(q), \Delta_q, s'(q; \Delta_q)),$$
and the right-hand side satisfies \cref{eq:piTW}. In direction $\Delta_q$
\begin{equation}\label{eq:covar}
  (\nabla_{\Delta_q} s)(q) := (\rC\circ \rD s)(q, \Delta_q) = s'(q, \Delta_q) +\Gamma(q; \Delta_q, s(q))\in \cW_q.
\end{equation}
If $c$ is a vector field then $\nabla_c s$ is a section, evaluated at $q$ by \cref{eq:covar} with $\Delta_q = c(q)$. In particular, for $\rC=\rC^{\PiW}$ the covariant derivative is
\begin{equation}(\nabla_{c}^{\PiW}s)(q) = s'(q, c(q)) -\PiW'(q; c(q)) s(q) = \PiW(q)s'(q; c(q)).
\end{equation}
The definition of $\Gamma$ in terms of $\mrGamma$ depends on a particular choice of $\PiW$. Replacing $\PiW$ by $\Pi_{\cW,1}$ shifts $\Gamma(\Delta_q, f)$ to  $\Gamma(\Delta_q, f) + \PiW'(\Delta_q, f) -  \Pi'_{\cW,1}(\Delta_q, f)$.

To recover the bundle map $\rC$ from  a covariant derivative $\nabla^1$ in the local coordinate definition, given by Christoffel symbols organized to a bilinear map $\Gamma^1$ from $T_q\cQ\times \cW_q$ to $\cF$, consider a curve $c(t)$ with $c(0) = q, \dot{c}(0) =\Delta_q$ and consider the section $s(t) = \Pi_{\cW}(c(t))(f + t\Delta_f)$ over the curve $c(t)$. Then we can check
$$ \cW_q\ni(\nabla^1_{\dot{c}(t)}s(t))_{t=0} = \Pi'(q, \Delta_q)f + \Pi(q)\Delta_f + \Gamma^1(q; \Delta_q, f) = \Delta_f + \Gamma^1(q; \Delta_q, f).
$$
Thus, $\Gamma^1(q; \Delta_q, f)$ is a connection in our setup and the above defines a connection map.

For $q\in\cQ$, given two tangent vectors $\xi, \eta\in T_q\cQ$ and $w\in \cW_q$, the curvature of $\nabla$ is a tensor, i.e. for arbitrary extensions of $\xi, \eta$ to vector fields $\vxi, \veta$ and section $s_w$ near $q$, the element of $\cW_q$ defined below is independent of the choice of the extension
\begin{equation} (\Rc_{\xi\eta }w)_q := (\nabla_{\vxi} \nabla_{\veta} s_w - \nabla_{\veta} \nabla_{\vxi} s_w
- \nabla_{[\vxi, \veta]} s_w )_q.\end{equation}

\begin{theorem}\label{theo:cur_embed} Let $\cQ$ be a submanifold of $\cE$ and $\cW$ is a subbundle of $\cQ\times\cF$. Let $\nabla$ be a connection with Christoffel function $\Gamma:\cQ\to \Lin(\cE\otimes \cF,\cF)$. For $\xi, \eta \in T_q\cQ$, and $w \in \cW_q$, the curvature of $\nabla$ is given by
    \begin{equation}\label{eq:rc1}
      \Rc_{\xi,\eta}w = (\rD_{\xi}\Gamma)(\eta, w) - (\rD_{\eta}\Gamma)(\xi, w) + \Gamma(\xi, \Gamma(\eta, w)) - \Gamma(\eta, \Gamma(\xi, w)).
    \end{equation}
    Here, all expressions are evaluated at $q$, the directional derivatives are in $q$. Let $\Pi_{\cW}$ be a projection to $\cW$. If $\mathring{\Gamma}(q; \xi, w) =  \Pi'_{\cW}(q;\xi)w +\Gamma(q; \xi, w)$ as in \cref{eq:mrGamma0}, where $\Pi'_{\cW}$ is valid for $w\in\cF$ (not just on $\cW$),  then 
    \begin{equation}\label{eq:rc2}
      \Rc_{\xi,\eta}w = (\rD_{\xi}\mrGamma)(\eta, w) -
      (\rD_{\eta}\mrGamma)(\xi, w) + \Gamma(\xi,\Gamma(\eta, w)) -  \Gamma(\eta,\Gamma(\xi, w)).
    \end{equation}
\end{theorem}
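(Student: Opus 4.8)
The plan is to exploit the tensoriality of the curvature stated just above the theorem: since $(\Rc_{\xi\eta}w)_q$ does not depend on how $\xi,\eta$ are extended to vector fields nor on how $w$ is extended to a section, I would work with the extensions supplied by the projection functions, $v_\xi(x)=\Pi(x)\xi$, $v_\eta(x)=\Pi(x)\eta$ and $s_w(x)=\Pi_{\cW}(x)w$. These are genuine vector fields and a genuine section on a neighborhood of $q$ (smoothness of $\Pi,\Pi_{\cW},\Gamma$), and $s_w(q)=w$, $v_\xi(q)=\xi$, $v_\eta(q)=\eta$. The first payoff is that, by \cref{lem:torsionfree}, $[v_\xi,v_\eta]_q=\Pi'(q;\xi)\eta-\Pi'(q;\eta)\xi=0$, so applying \cref{eq:covar} with base vector $0$ gives $\nabla_{[v_\xi,v_\eta]}s_w(q)=0$: the bracket term disappears, and it suffices to expand $\nabla_{v_\xi}\nabla_{v_\eta}s_w(q)-\nabla_{v_\eta}\nabla_{v_\xi}s_w(q)$.

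The computation is two applications of \cref{eq:covar} plus the product rule. Writing $t:=\nabla_{v_\eta}s_w$, which by \cref{eq:covar} is the section $x\mapsto s_w'(x;v_\eta(x))+\Gamma(x;v_\eta(x),s_w(x))$, one has $\nabla_{v_\xi}t(q)=t'(q;\xi)+\Gamma(q;\xi,t(q))$. Differentiating $t$ at $q$ in direction $\xi$, using $\rD_\xi v_\eta(q)=\Pi'(q;\xi)\eta$, $\rD_\xi s_w(q)=\Pi_{\cW}'(q;\xi)w$, the chain rule on $s_w'$ (which brings in the second derivative of $s_w$), and the product rule on the bilinear $\Gamma(x;\cdot,\cdot)$ in all three of its slots, yields a sum of terms; likewise for $\Gamma(q;\xi,t(q))$ after substituting $t(q)=\Pi_{\cW}'(q;\eta)w+\Gamma(q;\eta,w)$. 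Antisymmetrizing in $\xi\leftrightarrow\eta$, the survivors are precisely $(\rD_\xi\Gamma)(\eta,w)-(\rD_\eta\Gamma)(\xi,w)+\Gamma(\xi,\Gamma(\eta,w))-\Gamma(\eta,\Gamma(\xi,w))$, i.e. \cref{eq:rc1}. Everything else cancels: the two second-derivative terms of $s_w$ cancel by symmetry of second derivatives; the terms $s_w'(q;\Pi'(q;\xi)\eta)$ and $\Gamma(q;\Pi'(q;\xi)\eta,w)$ cancel against their $\xi\leftrightarrow\eta$ swaps because $\Pi'(q;\xi)\eta=\Pi'(q;\eta)\xi$ by \cref{lem:torsionfree} and $\Gamma$ is bilinear; and the mixed terms $\Gamma(q;\eta,\Pi_{\cW}'(q;\xi)w)$ and $\Gamma(q;\xi,\Pi_{\cW}'(q;\eta)w)$ occur identically in both $\nabla_{v_\xi}\nabla_{v_\eta}s_w(q)$ and $\nabla_{v_\eta}\nabla_{v_\xi}s_w(q)$, hence vanish in the difference. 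Here $\Gamma$ is taken in its fixed bilinear extension to $\cE\otimes\cF$; since $\nabla_{v_\xi}\nabla_{v_\eta}s_w(q)$ is intrinsically defined, the outcome is extension-independent, as in the remark following \cref{eq:covar}.

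For \cref{eq:rc2} I would substitute $\Gamma(x;\cdot,w)=\mrGamma(x;\cdot,w)-\Pi_{\cW}'(x;\cdot)w$ into only the two first-order derivative terms of \cref{eq:rc1}, keeping $\Gamma$ in the quadratic terms. The extra contribution is $\rD_\xi\bigl[x\mapsto\Pi_{\cW}'(x;\eta)w\bigr](q)-\rD_\eta\bigl[x\mapsto\Pi_{\cW}'(x;\xi)w\bigr](q)=\Pi_{\cW}''(q)(\xi,\eta)w-\Pi_{\cW}''(q)(\eta,\xi)w=0$ by symmetry of the second derivative of $\Pi_{\cW}\colon\cE\to\Lin(\cF,\cF)$ — which is exactly the point where one needs $\Pi_{\cW}'$ in the form valid on all of $\cF$, as hypothesized. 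This gives \cref{eq:rc2}.

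The main obstacle is bookkeeping rather than conceptual: one must keep straight, when differentiating $\Gamma(x;v_\eta(x),s_w(x))$, the three separate contributions (base-point slot, direction slot, vector slot), and track which of the resulting terms in each of the two iterated covariant derivatives are symmetric under $\xi\leftrightarrow\eta$ and therefore die in the antisymmetrization. The only two genuinely substantive cancellations rest on (i) \cref{lem:torsionfree}, itself a consequence of the Weingarten identity \cref{eq:Wein}, forcing $\Pi'(q;\xi)\eta=\Pi'(q;\eta)\xi$, and (ii) symmetry of ordinary second derivatives of the $\cF$-valued maps $s_w$ and $\Pi_{\cW}$; a final minor point to state carefully is the independence of the formula from the chosen bilinear extension of $\Gamma$, which is already granted in the text.
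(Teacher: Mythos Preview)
Your proposal is correct and follows essentially the same approach as the paper's proof: choose the extensions $v_\xi=\Pi\xi$, $v_\eta=\Pi\eta$, $s_w=\Pi_{\cW}w$, kill the bracket term via \cref{lem:torsionfree}, expand $\nabla_{v_\xi}\nabla_{v_\eta}s_w-\nabla_{v_\eta}\nabla_{v_\xi}s_w$ by the product rule on the bilinear $\Gamma$, and cancel the symmetric pieces (second derivatives of $s_w$, the $\Gamma(\rD_{v_\xi}v_\eta,s_w)$ terms via $\rD_{v_\xi}v_\eta=\rD_{v_\eta}v_\xi$, and the mixed $\Gamma(\cdot,\Pi_{\cW}'(\cdot)w)$ pairs); the passage to \cref{eq:rc2} via symmetry of the second derivative of $\Pi_{\cW}$ is also exactly the paper's argument. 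The only cosmetic difference is that the paper packages the second-derivative cancellation of $s_w$ as $\rD_{v_\xi}\rD_{v_\eta}s_w-\rD_{v_\eta}\rD_{v_\xi}s_w=\rD_{[v_\xi,v_\eta]}s_w=0$ in one stroke before expanding, whereas you itemize it.
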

\begin{proof} Let $\Pi$ be a projection to $T\cQ$. Let $\vxi(y) = \Pi(y)\xi, \veta(y)= \Pi(y)\eta$, and extend $w$ to a section $s_w(y) = \Pi_{\cW}(y)w$ for $y\in\cQ$. By \cref{eq:TorsionFree}, $[\vxi, \veta]_q = 0$ and
$$\begin{gathered}
    (\Rc_{\xi\eta }w)_q = (\nabla_{\vxi} \nabla_{\veta} s_w - \nabla_{\veta} \nabla_{\vxi} s_w- \nabla_{[\vxi, \veta]} s_w)_q\\
    =\{\rD_{\vxi}(\rD_{\veta}s_w +\Gamma(\veta, s_w))\}_q + \Gamma(\vxi, (\rD_{\veta}s_w +\Gamma(\veta, s_w)))_q -\\
    \{\rD_{\veta}(\rD_{\vxi}s_w +\Gamma(\vxi, s_w))\}_q - \Gamma(\veta, (\rD_{\vxi}s_w +\Gamma(\vxi, s_w)))_q\end{gathered}
$$
We have $(\rD_{\vxi}\rD_{\veta}s_w - \rD_{\veta}\rD_{\vxi}s_w)_q = (\rD_{[\vxi, \veta]}s_w)_q = 0$, thus
\begin{equation}\label{eq:rcm_alt}
  \begin{gathered}(\Rc_{\xi\eta }w)_q = \{\rD_{\vxi}(\Gamma(\veta, s_w))\}_q  + \Gamma(\vxi, (\rD_{\veta}s_w +\Gamma(\veta, s_w)))_q \\
- \{\rD_{\veta}(\Gamma(\vxi, s_w))\}_q - \Gamma(\veta, (\rD_{\vxi}s_w +\Gamma(\vxi, s_w)))_q.\end{gathered}\end{equation}
As $\Gamma$ is bilinear, we have
$$\rD_{\vxi}(\Gamma(\veta, s_w)) = (\rD_{\vxi}\Gamma)(\veta, s_w) + \Gamma(\rD_{\vxi}\veta, s_w) + \Gamma(\veta, \rD_{\vxi}s_w)
$$
Substitute the above and the similar combination for $(\veta, \vxi, s_w)$ to \cref{eq:rcm_alt}, the terms $\Gamma(\rD_{\vxi}\veta, s_w)_q$ and $\Gamma(\rD_{\veta}\vxi, s_w)_q$ from the similar combination cancel as $(\rD_{\vxi}\veta)_q = (\rD_{\veta}\vxi)_q$ by \cref{lem:torsionfree}. Thus
$$\begin{gathered}(\Rc_{\xi\eta }w)_q = (\rD_{\vxi}\Gamma)(\veta, s_w)_q + \Gamma(\veta, \rD_{\vxi}s_w)_q + \Gamma(\vxi, (\rD_{\veta}s_w +\Gamma(\veta, s_w)))_q\\
- (\rD_{\veta}\Gamma)(\vxi, s_w)_q - \Gamma(\vxi, \rD_{\veta}s_w)_q
  - \Gamma(\veta, (\rD_{\vxi}s_w +\Gamma(\vxi, s_w)))_q\\
= (\rD_{\vxi}\Gamma)(\veta, s_w)_q  + \Gamma(\vxi, \Gamma(\veta, s_w))_q -
(\rD_{\veta}\Gamma)(\vxi, s_w)_q
  - \Gamma(\veta, \Gamma(\vxi, s_w))_q
\end{gathered}
$$
or \cref{eq:rc1}. From $(\rD_{\xi}\rD_{\eta}\Pi_{\cW})w - (\rD_{\eta}\rD_{\xi}\Pi_{\cW})w =0$, \cref{eq:rc2} follows.
\end{proof}
\begin{remark}\label{rem:GaussCodazzi}If $\mrGamma=0, \Gamma(q;\xi,\eta)=-\Pi'(q;\xi)\eta$, \cref{eq:rc2} is equivalent to
\begin{equation}(\Rc_{\xi, \eta}w)_q . w_* = \Pi'(q;\eta) w.\Pi'(q;\xi)^{\ft}w_* - \Pi'(q;\xi)w.\Pi'(q; \eta)^{\ft}w_*\label{eq:GaussCod}\end{equation}
for a vector $w_*\in\cW^*$. If $\cW=T\cQ$ and $\Pi$ is given by the Euclidean projection to $T\cQ$ (thus, is self-adjoint), this is the Gau{\ss}-Codazzi equation (see \cref{subsec:secondfund}).
\end{remark}  

\begin{example}\label{expl:sphere}For the unit sphere $q^{\ft}q = 1$ in $\R^n$ with the projection $\Pi(q) = \dI_n - qq^{\ft}$, for two tangent vectors $\xi, \eta$, $\Pi'(q; \xi)\eta = -q\xi^{\ft}\eta$, $\Gamma^{\Pi}(q; \xi, \eta) = q\xi^{\ft}\eta$. The curvature of $\Gamma=\Gamma^{\Pi}$ at three tangent vectors $\xi, \eta, \phi$ is
  $$\begin{gathered}\Rc_{\xi,\eta}\phi = \rD_{\xi}\Gamma(\eta, \phi) - \rD_{\eta}\Gamma(\xi, \phi) + \Gamma(\xi, \Gamma(\eta, \phi)) -\Gamma(\eta, \Gamma(\xi, \phi)) \\
    = \xi\eta^{\ft}\phi - \eta\xi^{\ft}\phi + q\xi^{\ft}(q\eta^{\ft}\phi)^{\ft}
    - q\eta^{\ft}(q\xi^{\ft}\phi)^{\ft}=\xi\eta^{\ft}\phi - \eta\xi^{\ft}\phi.
  \end{gathered}$$
To use \cref{eq:rc2} with $\mrGamma=0$, we need $\Gamma=\mrGamma+\Pi'$ on $\cE$, or $\Gamma(q; \xi, \omega) =   q\xi^{\ft}\omega + \xi q^{\ft}\omega$
$$\begin{gathered}
 \Gamma(\xi, \Gamma(\eta, \phi)) =
q\xi^{\ft}(q\eta^{\ft}\phi + \eta q^{\ft}\phi) + \xi q^{\ft}(q\eta^{\ft}\phi + \eta q^{\ft}\phi)= \xi\eta^{\ft}\phi\\
\Rc_{\xi,\eta}\phi = \Gamma(\xi, \Gamma(\eta, \phi)) - \Gamma(\eta, \Gamma(\xi, \phi))=\xi\eta^{\ft}\phi - \eta\xi^{\ft}\phi.
\end{gathered}$$
\end{example}
Consider the case $\cW= T\cQ\subset \cQ\times\cE$ for $\cQ\subset\cE$. For two vector fields $\ttX, \ttY$ on $\cQ$, the torsion tensor of a connection $\nabla$ on $T\cQ$ is defined as
\begin{equation}Tor_{\nabla}(X, Y) = \nabla_{\ttX}\ttY - \nabla_{\ttY}\ttX - [\ttX, \ttY]
\end{equation}
From \cref{lem:torsionfree}, for any projection function $\Pi$ on $T\cQ$, $\nabla^{\Pi}$ is {\it torsion-free}.
\subsection{Conjugate connections, second fundamental forms, and the affine Gau{\ss}-Codazzi equation}\label{subsec:secondfund}
We start out with a more general setting. Consider a vector bundle $\cV$ over $\cQ$, let $\cV^*$ be the dual bundle and let ``$.$'' denote the pairing between $\cV$ and $\cV^*$. Let $\nabla^{\cV}$ be an affine connection on $\cV$. The conjugate connection $\nabla^{\cV^*}$ is the unique connection on $\cV^*$ satisfying
\begin{equation}s.\nabla^{\cV^*}_{\ttX}s_* =\rD_{\ttX}(s.s_*) - \nabla^{\cV}_Xs . s_*.\label{eq:definDual}
\end{equation}
for a vector field $\ttX$ and sections $s$ of $\cV$, $s_*$ of $\cV^*$. For example, if $\cV$ and $\cV^*$ are identified as subbundles of $\cQ\times\cF$ as before, $\rC^{\cV}$ is the connection map $\rC^{\cV}_{q, f}(\Delta_q, \Delta_f) = \Delta_f + \Gamma(q; \Delta_q, f)$ and $\Pi$ is the projection of $\cV$, then the conjugate connection map satisfies
\begin{equation}
    \rC_{q, f^*}^{\cV^*}(\Delta_q, \Delta_{f_*})= \Delta_{f_*} -\Pi'(q, \Delta_q)^{\ft}f_* - \Pi(q)^{\ft}\{h \mapsto \Gamma(q; \Delta_q, h)\}^{\ft}f_*\in \cV^*.
\end{equation}    
Denote by $\Rc^{\cV}$ and $\Rc^{\cV^*}$ the respective curvatures.
\begin{theorem}[affine Gau{\ss}-Codazzi]\label{theo:GaussCod}Assume $\nabla^{\cV}$ and $\nabla^{\cV^*}$ are conjugate connections of $\cV$ and $\cV^*$. Let $\Pi_{\cW}$ be an affine projection from $\cV$ to a subbundle $\cW$, so $\Pi_{\cW}$ is a bundle map from $\cV$ to itself such that $\Imag(\Pi_{\cV}(q)) = \cW_q$, $q\in\cQ$. Then $\Pi_{\cW}^{\ft}$ is an affine projection on $\cV^*$, the pairing between $\cW^*:=\Imag(\Pi_{\cW}^{\ft})$ and $\cW$ is nondegenerate. Define $\Pi_{\cW^*}:=\Pi_{\cW}^{\ft}$. For a vector field $\ttX$ and sections $s$, of $\cW\subset\cV$, $s_*$ of $\cW^*\subset\cV^*$ define
\begin{gather}  
  \nabla_{\ttX}^{\cW}s := \Pi_{\cW}\nabla_{\ttX}^{\cV}s,\label{eq:defineNablaTW}\\
  \Two_{\cW}(\ttX, s) :=\nabla_{\ttX}^{\cV}s - \Pi_{\cW}\nabla_{\ttX}^{\cV}s,\label{eq:defineTwo}\\
  \nabla_{\ttX}^{\cW^*}s_* := \Pi_{\cW^*}\nabla_{\ttX}^{\cV^*}s_*,\label{eq:defineConj}\\
  \Two_{\cW^*}(\ttX, s_*) :=\nabla_{\ttX}^{\cV^*}s_* - \Pi_{\cW^*}\nabla_{\ttX}^{\cV^*}s_*,  \label{eq:defineTwoSt}
\end{gather}
Then $\nabla^{\cW}$ and $\nabla^{\cW^*}$ are conjugate connections on $\cW$ and $\cW^*$, $\Two_{\cW}$ and $\Two_{\cW^*}$ are tensors, called the affine second fundamental forms. Let $\cN_{\cW}$ and $\cN_{\cW^*}$ be the kernels of $\Pi_{\cW}$ and $\Pi_{\cW^*}$ respectively, then for fixed $\ttX$, $\Two_{\cW}$ could be considered a map from $\cW$ to $\cN_{\cW}$, $\Two_{\cW^*}$ is a map from $\cW^*$ to $\cN_{\cW^*}$. Let $\Rc^{\cW}$ be the curvature of $\nabla^{\cW}$, then for two vector fields $\ttX, \ttY$
\begin{gather}
  \Rc^{\cV}_{\ttX\ttY}s . s_*= - s . \Rc^{\cV^*}_{\ttX\ttY}s_*,\label{eq:RcT}\\
\Rc^{\cV}_{\ttX,\ttY}s . s_*= \Rc^{\cW}_{\ttX\ttY}s . s_* + \Two_{\cW}(\ttX, s) . \Two_{\cW^*}(\ttY, s_*)-\Two_{\cW}(\ttY, s) . \Two_{\cW^*}(\ttX, s_*).\label{eq:affineGauss}
\end{gather} 
\end{theorem}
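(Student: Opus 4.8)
The plan is to reduce all the claims to the conjugacy identity $\rD_{\ttX}(s.s_*) = \nabla^{\cV}_{\ttX}s.s_* + s.\nabla^{\cV^*}_{\ttX}s_*$ coming from \cref{eq:definDual}, together with two elementary bookkeeping facts about the projections. \emph{Fact (i):} for a section $s$ of $\cW$ and $s_*$ of $\cW^* = \Imag(\Pi_{\cW}^{\ft})$ one has $\Pi_{\cW}\nabla^{\cV}_{\ttX}s.s_* = \nabla^{\cV}_{\ttX}s.\Pi_{\cW}^{\ft}s_* = \nabla^{\cV}_{\ttX}s.s_*$, since $\Pi_{\cW}^{\ft}$ restricts to the identity on $\cW^*$; dually $s.\Pi_{\cW^*}\nabla^{\cV^*}_{\ttX}s_* = (\Pi_{\cW^*}^{\ft}s).\nabla^{\cV^*}_{\ttX}s_* = s.\nabla^{\cV^*}_{\ttX}s_*$ because $\Pi_{\cW^*}^{\ft} = \Pi_{\cW}$ and $\Pi_{\cW}s = s$. \emph{Fact (ii):} a section $n$ of $\cN_{\cW} = \Null(\Pi_{\cW})$ annihilates every section of $\cW^*$, as $n.\Pi_{\cW}^{\ft}v_* = \Pi_{\cW}n.v_* = 0$; dually a section of $\cN_{\cW^*} = \Null(\Pi_{\cW}^{\ft})$ annihilates every section of $\cW$. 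The same computation as in \cref{lem:proj1} shows that $\Pi_{\cW}^{\ft}$ is idempotent with image a subbundle and that the pairing between $\cW$ and $\cW^*$ is nondegenerate.

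First I would settle the structural statements. That $\nabla^{\cW} = \Pi_{\cW}\circ\nabla^{\cV}$ is a connection on $\cW$ is immediate from the Leibniz rule for $\nabla^{\cV}$ together with $\Pi_{\cW}s = s$; likewise $\nabla^{\cW^*}$. Conjugacy of $\nabla^{\cW}$ and $\nabla^{\cW^*}$ then follows by applying Fact (i) on both sides of \cref{eq:definDual}: $s.\nabla^{\cW^*}_{\ttX}s_* = s.\nabla^{\cV^*}_{\ttX}s_* = \rD_{\ttX}(s.s_*) - \nabla^{\cV}_{\ttX}s.s_* = \rD_{\ttX}(s.s_*) - \nabla^{\cW}_{\ttX}s.s_*$. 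Writing $\Two_{\cW}(\ttX,s) = (1-\Pi_{\cW})\nabla^{\cV}_{\ttX}s$, the $C^{\infty}(\cQ)$-linearity in $s$ holds because the $(\rD_{\ttX}\phi)s$ term in $\nabla^{\cV}_{\ttX}(\phi s)$ is killed by $1-\Pi_{\cW}$, tensoriality in $\ttX$ is clear, and $\Pi_{\cW}\Two_{\cW}(\ttX,s) = (\Pi_{\cW}-\Pi_{\cW}^2)\nabla^{\cV}_{\ttX}s = 0$ shows $\Two_{\cW}(\ttX,\cdot)$ is valued in $\cN_{\cW}$; the statements for $\Two_{\cW^*}$ are obtained verbatim with $\Pi_{\cW}$ replaced by $\Pi_{\cW}^{\ft}$.

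Equation \cref{eq:RcT} is the standard fact that the curvature of the conjugate connection is the negative transpose. I would differentiate $\rD_{\ttX}(s.s_*) = \nabla^{\cV}_{\ttX}s.s_* + s.\nabla^{\cV^*}_{\ttX}s_*$ once more along $\ttY$, do the same with $\ttX$ and $\ttY$ swapped, subtract, and use $\rD_{\ttX}\rD_{\ttY} - \rD_{\ttY}\rD_{\ttX} = \rD_{[\ttX,\ttY]}$; the mixed terms $\nabla^{\cV}_{\ttX}s.\nabla^{\cV^*}_{\ttY}s_*$ and $\nabla^{\cV}_{\ttY}s.\nabla^{\cV^*}_{\ttX}s_*$ cancel, and after expanding $\rD_{[\ttX,\ttY]}(s.s_*)$ by conjugacy the identity collapses to $\Rc^{\cV}_{\ttX\ttY}s.s_* + s.\Rc^{\cV^*}_{\ttX\ttY}s_* = 0$.

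For the affine Gau{\ss}-Codazzi equation \cref{eq:affineGauss} I would pair $\Rc^{\cV}_{\ttX\ttY}s$ with $s_*$ and expand $\nabla^{\cV}_{\ttX}\nabla^{\cV}_{\ttY}s$ using the decomposition $\nabla^{\cV}_{\ttX} = \nabla^{\cW}_{\ttX} + \Two_{\cW}(\ttX,\cdot)$ on sections of $\cW$, applied in two layers to $\nabla^{\cV}_{\ttY}s = \nabla^{\cW}_{\ttY}s + \Two_{\cW}(\ttY,s)$. Pairing against $s_*$ and using Fact (ii): (a) $\nabla^{\cV}_{\ttX}(\nabla^{\cW}_{\ttY}s).s_* = \nabla^{\cW}_{\ttX}\nabla^{\cW}_{\ttY}s.s_* + \Two_{\cW}(\ttX,\nabla^{\cW}_{\ttY}s).s_* = \nabla^{\cW}_{\ttX}\nabla^{\cW}_{\ttY}s.s_*$; and (b) since $\Two_{\cW}(\ttY,s).s_*$ is identically zero, differentiating it and using conjugacy gives $\nabla^{\cV}_{\ttX}(\Two_{\cW}(\ttY,s)).s_* = -\Two_{\cW}(\ttY,s).\nabla^{\cV^*}_{\ttX}s_*$, into which I substitute $\nabla^{\cV^*}_{\ttX}s_* = \nabla^{\cW^*}_{\ttX}s_* + \Two_{\cW^*}(\ttX,s_*)$, the first summand lying in $\cW^*$ and hence pairing to zero against $\Two_{\cW}(\ttY,s)\in\cN_{\cW}$, which leaves $-\Two_{\cW}(\ttY,s).\Two_{\cW^*}(\ttX,s_*)$. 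Hence $\nabla^{\cV}_{\ttX}\nabla^{\cV}_{\ttY}s.s_* = \nabla^{\cW}_{\ttX}\nabla^{\cW}_{\ttY}s.s_* - \Two_{\cW}(\ttY,s).\Two_{\cW^*}(\ttX,s_*)$; exchanging $\ttX\leftrightarrow\ttY$ and subtracting, and noting $\nabla^{\cV}_{[\ttX,\ttY]}s.s_* = \nabla^{\cW}_{[\ttX,\ttY]}s.s_*$ (again Fact (ii)), assembles \cref{eq:affineGauss}. I expect no genuine obstacle here: once Facts (i) and (ii) and the conjugacy relation are in place the computation is purely algebraic, and the only point needing care is keeping straight which of the four summands lie in $\cW$, $\cW^*$, $\cN_{\cW}$, $\cN_{\cW^*}$ so that precisely the correct cross-terms vanish.
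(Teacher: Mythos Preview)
Your proof is correct and follows essentially the same approach as the paper: both arguments rest on the decomposition $\nabla^{\cV} = \nabla^{\cW} + \Two_{\cW}$, the conjugacy relation \cref{eq:definDual}, and the orthogonality between $\cN_{\cW}$ and $\cW^*$ (your Fact (ii)). The only cosmetic difference is that the paper reuses an intermediate line from its proof of \cref{eq:RcT} and substitutes the decomposition on both the $\cV$ and $\cV^*$ sides simultaneously, whereas you expand $\nabla^{\cV}_{\ttX}\nabla^{\cV}_{\ttY}s.s_*$ directly and handle the $\Two_{\cW}(\ttY,s)$ piece via the nice trick of differentiating the identically-zero pairing; the algebra is the same.
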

\begin{proof}A number of statements have been proved for $\cV=\cQ\times\cF$ above, the proofs extend to general $\cV$, and we will prove the new statements. We have $\nabla_{\ttX}^{\cW}s.s_* = \nabla_{\ttX}^{\cV}s.s_*$ if $s, s_*$ are sections of $\cW, \cW^*$, which implies \cref{eq:definDual} is satisfied for $\nabla^{\cW}$ and $\nabla^{\cW^*}$, thus they are conjugate connections. We verify $\Pi_{\cW}\Two_{\cW}=0=\Pi_{\cW^*}\Two_{\cW^*}$ directly from \cref{eq:defineTwo,eq:defineTwoSt}. Equation \ref{eq:RcT} is well-known \cite{Nielsen} and implies the well-known fact in information geometry that the conjugate of a flat (zero-curvature) connection is flat. The proof is below
  $$\begin{gathered}\Rc^{\cV}_{\ttX\ttY}s . s_*=\nabla^{\cV}_{\ttX}\nabla^{\cV}_{\ttY}s.s_*-\nabla^{\cV}_{\ttY}\nabla^{\cV}_{\ttX}s.s_* - \nabla^{\cV}_{[\ttX,\ttY]}s.s_*=
    \rD_{\ttX}(\nabla^{\cV}_{\ttY}s.s_*) -\nabla^{\cV}_{\ttY}s. \nabla^{\cV^*}_{\ttX}s_*\\
    -\rD_{\ttY}(\nabla^{\cV}_{\ttX}s.s_*) + \nabla^{\cV}_{\ttX}s. \nabla^{\cV^*}_{\ttY}s_* - \rD_{[\ttX,\ttY]}(s.s_*) + s.\nabla^{\cV^*}_{[\ttX,\ttY]}s_*\\
    =\rD_{\ttX}\rD_{\ttY}(s.s_*)
    - \rD_{\ttX}(s.\nabla^{\cV^*}_{\ttY}s_*)
    -\rD_{\ttY}( s. \nabla^{\cV^*}_{\ttX}s_*)
    +s. \nabla^{\cV^*}_{\ttY}\nabla^{\cV^*}_{\ttX}s_*
    \\
    -\rD_{\ttY}\rD_{\ttX}(s.s_*)
    + \rD_{\ttY}(s.\nabla^{\cV^*}_{\ttX}s_*)
    +\rD_{\ttX}( s. \nabla^{\cV^*}_{\ttY}s_*)
    -s. \nabla^{\cV^*}_{\ttX}\nabla^{\cV^*}_{\ttY}s_*
    - \rD_{[\ttX,\ttY]}(s.s_*) + s.\nabla^{\cV^*}_{[\ttX,\ttY]}s_*\\
    =s. \nabla^{\cV^*}_{\ttY}\nabla^{\cV^*}_{\ttX}s_*
    -s. \nabla^{\cV^*}_{\ttX}\nabla^{\cV^*}_{\ttY}s_*
     + s.\nabla^{\cV^*}_{[\ttX,\ttY]}s_*=-s.\Rc^{\cV^*}_{\ttX\ttY}s_*.
  \end{gathered}$$
Using the fact that $\Two_{\cW}$ is sent to zero by $\Pi_{\cW}$, thus paired by zero with $\cW_q^*$, $q\in\cQ$, the first two lines in the proof above give
    $$\begin{gathered}\Rc^{\cV}_{\ttX\ttY}s . s_*=
    \rD_{\ttX}(\nabla^{\cW}_{\ttY}s.s_*) -(\nabla^{\cW}_{\ttY}s +\Two_{\cW}(\ttY, s)). (\nabla^{\cW^*}_{\ttX}s_*+\Two_{\cW^*}(\ttX, s_*))\\
    -\rD_{\ttY}(\nabla^{\cW}_{\ttX}s.s_*) + (\nabla^{\cW}_{\ttX}s+\Two_{\cW}(\ttX, s)). (\nabla^{\cW^*}_{\ttY}s_* +\Two_{\cW^*}(\ttY, s_*)) -  \nabla^{\cW}_{[\ttX,\ttY]} s.s_*\\
    =\rD_{\ttX}(\nabla^{\cW}_{\ttY}s.s_*) -\nabla^{\cW}_{\ttY}s . \nabla^{\cW^*}_{\ttX}s_* -\Two_{\cW}(\ttY, s). \Two_{\cW^*}(\ttX, s_*)\\
    -\rD_{\ttY}(\nabla^{\cW}_{\ttX}s.s_*) + \nabla^{\cW}_{\ttX}s. \nabla^{\cW^*}_{\ttY}s_* + \Two_{\cW}(\ttX, s). \Two_{\cW^*}(\ttY, s_*) -  \nabla^{\cW}_{[\ttX,\ttY]} s.s_*\\
    =    \nabla^{\cW}_{\ttX}\nabla^{\cW}_{\ttY}s.s_*-\nabla^{\cW}_{\ttY}\nabla^{\cW}_{\ttX}s.s_* - \nabla^{\cW}_{[\ttX,\ttY]}s.s_*  -\Two_{\cW}(\ttY, s). \Two_{\cW^*}(\ttX, s_*)
+ \Two_{\cW}(\ttX, s). \Two_{\cW^*}(\ttY, s_*),
\end{gathered}
$$
which gives us \cref{eq:affineGauss}.
\end{proof}  
If $\cV=\cV^*=\cQ\times \cF$, then $\Gamma^{\cV}=\Gamma^{\cF}$ is given by a $\Lin(\cE\otimes\cF, \cF)$-valued function on $\cQ$. Write $\Gamma^{\cF}(q;\xi)w$ for $\Gamma(q;\xi, w)$for $\xi\in T_q\cQ, w\in \cW_q, w_*\in\cW_q^*$, using the section $\Pi_{\cW}w$, it is easy to get expressions for $\Gamma^{\cW}$, $\Two^{\cW}$ and the conjugates, for example
\begin{gather}
  \Gamma^{\cW}(q; \xi, w)=-\Pi'_{\cW}(q;\xi)w + \Pi_{\cW}(q)\Gamma^{\cF}(q;\xi, w),\\
  \Two_{\cW}(q;\xi, w) = \Pi'_{\cW}(q;\xi)w + (\dI_{\cF}-\Pi_{\cW}(q))\Gamma^{\cF}(q;\xi, w).\label{eq:TwocW}
\end{gather}
\subsection{The metric-potential Hamiltonian and the Levi-Civita connection}
We now consider a {\it metric operator}, a function $\sfg$ from $\cQ$ evaluates to invertible symmetric operators on $\cE$, thus for $q\in\cQ$, $\sfg(q)\in \Lin(\cE, \cE)$ is invertible and $\sfg(q)^{\ft} =\sfg(q)$. This allows us to define a new pairing, for $\omega_1, \omega_2\in \cE$
\begin{equation}
  \langle \omega_1, \omega_2\rangle_{\sfg} := \langle \omega_1,\sfg(q) \omega_2\rangle_{\cE} = \omega_1 . \sfg(q)\omega_2. 
\end{equation}
If this pairing is non-degenerate on $T_q\cQ$ (for example, if $\sfg(q)$ is positive definite), this gives $\cQ$ a semi-Riemannian metric \cite[section 3.7]{AbrahamMarsden}. We show in \ref{appx:Extend} any semi-Riemannian metric on $\cQ$ arises this way. A connection $\nabla$ is called {\it metric compatible} (self-conjugate under $\sfg$-pairing) if for three vector fields $\ttX, \ttY, \ttZ$
\begin{equation}\rD_{\ttX}\langle \ttY, \ttZ\rangle_{\sfg} =
  \langle\nabla_{\ttX} \ttY, \ttZ\rangle_{\sfg} + \langle \ttY,\nabla_{\ttX}\ttZ\rangle_{\sfg}.
\end{equation}
The fundamental theorem of (semi)Riemannian geometry says for a (semi)Riemannian metric, there exists a unique torsion-free connection, the Levi-Civita connection $\Gamma$, that is metric compatible. The geodesic is the curve on $\cQ$ satisfying the equation $\ddot{q} + \Gamma(q; \dot{q}, \dot{q}) = 0$. From the same reference, it is also the $\cQ$-component of the Hamilton flow for the metric Hamiltonian $\frac{1}{2}p.\sfg(q)^{-1} p$. We will derive an expression for the geodesic equation, hence the Levi-Civita connection.

The projection $\Pi_{\sfg}(q)$ in \cref{eq:ProjG2} satisfies
\begin{equation}
  \Pi_{\sfg}^{\ft}(q)= \sfg(q)\Pi_{\sfg}(q)\sfg^{-1}(q).\label{eq:PiTg}
\end{equation}  
This gives us the description of $T_q^*\cQ$ as $\Imag(\sfg\Pi_{\sfg}\sfg^{-1})=\Imag(\sfg\Pi_{\sfg})=\sfg(q)T_q\cQ$. This is the relationship between momentum ($p$), mass ($\sfg$) and velocity ($v$).

Let $f$ be a smooth function on $\cQ$, extended to $\cE$, playing the role of the potential, with gradient $\egrad_f$ in $\cE$. With $\frac{1}{2} p.\sfg(q)^{-1}p$ as the kinetic energy we consider the classical {\it metric-potential} Hamiltonian on $T^*\cQ$
\begin{equation}\label{eq:classic}
  H(q, p) = \frac{1}{2} p.\sfg(q)^{-1}p + f(q)\text{ for }(q, p)\in T^*\cQ.
\end{equation}
For $\xi, \eta\in T_q\cQ$, let $\cX_{\sfg}(\xi, \eta) = \cX_{\sfg}(q; \xi, \eta) =\{\Delta_q\mapsto \sfg'(q; \Delta_q)\eta\}^{\ft}\xi\in\cE$, so for $\Delta_q\in T_q\cQ$, 
\begin{equation}\cX_{\sfg}(\xi, \eta).\Delta_q = \xi. \sfg'(q;\Delta_q)\eta.\end{equation}
Following \cite{Edelman_1999}, define the {\it Riemannian-gradient} to be $\rgrad_f=\Pi_{\sfg}\sfg^{-1}\egrad_f$.

  \begin{proposition}\label{prop:metricPotential}
    The Hamilton equations for the Hamiltonian in \cref{eq:classic} are
\begin{gather}\label{eq:dotqge}\dot{q}=  \Pi(q)\sfg^{-1}(q)p =\sfg^{-1}(q)p,\\
\label{eq:dotpge}\dot{p} =  \Pi'(q; \sfg(q)^{-1}p)^{\ft}p +\Pi(q)^{\ft}\{\frac{1}{2}\cX_{\sfg}(\sfg^{-1}(q)p, \sfg^{-1}(q)p)- \egrad_f(q)\}
\end{gather}
where $\Pi = \Pi_{\sfg}$ in \cref{eq:ProjG}. They are equivalent to the Euler-Lagrange equation
\begin{equation}\label{eq:geodesic}\ddot{q} - \Pi'(q, \dot{q})\dot{q} + \Pi(q)\sfg(q)^{-1}\{\sfg'(q;\dot{q}) - \frac{1}{2}\cX_{\sfg}(\dot{q}, \dot{q})\} + \rgrad_f(q)= 0.
\end{equation}
\end{proposition}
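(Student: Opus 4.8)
The plan is to specialize the Hamilton vector field formula \cref{eq:XH} of \cref{thm:HVF} to the metric-potential Hamiltonian $H(q,p) = \frac{1}{2}p.\sfg(q)^{-1}p + f(q)$, and then to eliminate $p$ in favour of $\dot{q}$ to reach the second-order equation \cref{eq:geodesic}. Extend $H$ to a neighbourhood of $T^*\cQ$ in $\cE^2$ by the same formula (using any smooth extensions of $\sfg$ and $f$ to $\cE$); by \cref{thm:HVF} the resulting Hamilton vector field on $T^*\cQ$ does not depend on the extension. Its $\cE^2$-gradient is $H_p = \sfg(q)^{-1}p$ and, using $\rD_{\Delta}\sfg(q)^{-1} = -\sfg(q)^{-1}\sfg'(q;\Delta)\sfg(q)^{-1}$, one gets $H_q = -\frac{1}{2}\cX_{\sfg}(\sfg(q)^{-1}p, \sfg(q)^{-1}p) + \egrad_f(q)$ as an element of $\cE$, where any $\cE$-valued representative of $\cX_{\sfg}$ may be used since only $\Pi(q)^{\ft}H_q$ enters \cref{eq:XH} and $\Pi(q)^{\ft}a$ depends on $a$ only through its pairing with $T_q\cQ$ by \cref{eq:Piq0}.

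Next I would substitute into \cref{eq:XH} with $\Pi = \Pi_{\sfg}$ from \cref{eq:ProjG}. Since $p\in T_q^*\cQ = \Imag(\Pi_{\sfg}(q)^{\ft}) = \sfg(q)T_q\cQ$ by \cref{eq:PiTg}, we have $\sfg(q)^{-1}p\in T_q\cQ$, hence $\Pi(q)\sfg(q)^{-1}p = \sfg(q)^{-1}p$; this is the first component, \cref{eq:dotqge}. For the second component, because $H_p = \sfg(q)^{-1}p$ is tangent, the affine Weingarten identity \cref{eq:WeinX} forces $\Pi(q)^{\ft}\{X\mapsto\Pi'(q;X)^{\ft}p\}^{\ft}H_p = 0$, so the only surviving terms are $\Pi'(q;\sfg(q)^{-1}p)^{\ft}p$ and $-\Pi(q)^{\ft}H_q$, which is precisely \cref{eq:dotpge}.

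For the equivalence with \cref{eq:geodesic}, I would differentiate the first Hamilton equation in the form $p = \sfg(q)\dot{q}$ to get $\dot{p} = \sfg'(q;\dot{q})\dot{q} + \sfg(q)\ddot{q}$, and equate this with \cref{eq:dotpge} (in which $\sfg(q)^{-1}p = \dot{q}$). The one nontrivial algebraic input is the metric-compatibility identity $\sfg(q)\Pi_{\sfg}(q) = \Pi_{\sfg}(q)^{\ft}\sfg(q)$ underlying \cref{eq:PiTg}: differentiating it in direction $\dot{q}$ and applying the resulting operator identity to $\dot{q}$ (so that $\Pi_{\sfg}(q)\dot{q} = \dot{q}$) gives
\[
\Pi_{\sfg}'(q;\dot{q})^{\ft}\sfg(q)\dot{q} = \sfg'(q;\dot{q})\dot{q} + \sfg(q)\Pi_{\sfg}'(q;\dot{q})\dot{q} - \Pi_{\sfg}(q)^{\ft}\sfg'(q;\dot{q})\dot{q}.
\]
Substituting this into the equated expressions, the terms $\sfg'(q;\dot{q})\dot{q}$ cancel, leaving $\sfg(q)\ddot{q} = \sfg(q)\Pi_{\sfg}'(q;\dot{q})\dot{q} - \Pi_{\sfg}(q)^{\ft}\sfg'(q;\dot{q})\dot{q} + \Pi_{\sfg}(q)^{\ft}\{\frac{1}{2}\cX_{\sfg}(\dot{q},\dot{q}) - \egrad_f(q)\}$. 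Applying $\sfg(q)^{-1}$, using $\sfg(q)^{-1}\Pi_{\sfg}(q)^{\ft} = \Pi_{\sfg}(q)\sfg(q)^{-1}$ (again \cref{eq:PiTg}) and $\rgrad_f = \Pi_{\sfg}\sfg^{-1}\egrad_f$, yields exactly \cref{eq:geodesic}; the converse direction, starting from \cref{eq:geodesic} together with the constraint $p = \sfg(q)\dot{q}$, is the same computation read in reverse.

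I expect the only real obstacle to be the bookkeeping: tracking the index-raising operator $\cX_{\sfg}$, the derivative of $\sfg^{-1}$, and, most importantly, recognizing that the combination $\sfg(q)\Pi_{\sfg}'(q;\dot{q})\dot{q}$ produced by differentiating metric compatibility is exactly what converts the flat acceleration $\sfg(q)\ddot{q}$ into the covariant acceleration $\sfg(q)\bigl(\ddot{q} - \Pi_{\sfg}'(q;\dot{q})\dot{q}\bigr)$. No ingredient beyond \cref{thm:HVF}, the Weingarten lemma \cref{eq:WeinX}, and \cref{eq:PiTg} is required.
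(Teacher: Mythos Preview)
Your proposal is correct and follows essentially the same route as the paper: compute $H_p,H_q$, invoke \cref{eq:XH} together with the Weingarten identity \cref{eq:WeinX} to obtain \cref{eq:dotqge}--\cref{eq:dotpge}, then differentiate the metric-compatibility relation \cref{eq:PiTg} and eliminate $p=\sfg(q)\dot q$ to reach \cref{eq:geodesic}. The only cosmetic difference is that the paper differentiates $\Pi^{\ft}=\sfg\Pi\sfg^{-1}$ as an operator identity and then applies it to $p$, whereas you differentiate $\sfg\Pi=\Pi^{\ft}\sfg$ and apply to $\dot q$; these are the same computation.
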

\begin{proof}
  We have $H_p(q, p)=\sfg(q)^{-1} p, H_q(q, p) = -\frac{1}{2}\cX_{\sfg}(\sfg^{-1}(g)p, \sfg^{-1}p)+\egrad_f(q)$. From here, \cref{eq:dotqge} follows from \cref{eq:XH} as $\sfg^{-1}(q)p\in T_q\cQ$, or $p = \sfg(q)\dot{q}$.

Equation (\ref{eq:dotpge}) follows from \cref{eq:XH,eq:WeinX}, as $H_p=\sfg(q)^{-1}p\in T\cQ$. To derive \cref{eq:geodesic}, differentiate $\Pi^{\ft} = \sfg\Pi\sfg^{-1}$
$$\Pi'(q, \dot{q})^{\ft} = \sfg'(q, \dot{q})\Pi(q)\sfg^{-1}(q) + \sfg(q)\Pi'(q; \dot{q})\sfg^{-1}(q) - \sfg(q)\Pi(q)\sfg^{-1}(q)\sfg'(q, \dot{q})\sfg^{-1}(q).
$$
We have $\dot{p} = \sfg(q, \dot{q})\dot{q} + \sfg(q)\ddot{q}$ from $p =\sfg(q)\dot{q}$. Drop the variable $q$ in $\Pi, \sfg, \sfg^{-1}, \egrad_f$ for brevity, \cref{eq:dotpge} becomes the below, which simplifies to \cref{eq:geodesic}:
$$\begin{gathered}\sfg'(q, \dot{q})\dot{q} + \sfg\ddot{q} = \sfg'(q, \dot{q})\Pi\sfg^{-1}p + \sfg\Pi'(q; \dot{q})\sfg^{-1}p
- \sfg\Pi\sfg^{-1}(q)\sfg'(q, \dot{q})\sfg^{-1}p\\
+ \sfg\Pi\sfg^{-1}\{\frac{1}{2}\cX(\sfg^{-1}p, \sfg^{-1}p) - \egrad_f\}
\label{eq:dotpgeo}
\end{gathered}$$
using $\sfg^{-1}p=\dot{q}$, canceling the first term on each side then apply $\sfg^{-1}$.

The Euler-Lagrange equation in \cite[Proposition 3.7.4]{AbrahamMarsden} is thus \cref{eq:geodesic}.
\end{proof}
Therefore, the Christoffel function of the Levi-Civita connection satisfies
$$\Gamma(\dot{q}, \dot{q}) = -\Pi'(q, \dot{q})\dot{q} + \Pi(q)\sfg(q)^{-1}(\sfg'(q;\dot{q}) - \frac{1}{2}\cX_{\sfg}(\dot{q}, \dot{q})).$$
Equation (\ref{eq:LeviCivita}) follows by polarization, as $\Gamma$ is torsion-free.

We now prove the Gau{\ss}-Codazzi equation from the affine version. Let $\nabla^{\cE}$ be the metric-compatible connection with Christoffel function $\frac{1}{2}\sfg^{-1}(\rD_{\ttX}\sfg\ttY+\rD_{\ttY}\sfg\ttX -\chi_{\sfg}(\ttX, \ttY))$ on $\cQ\times\cE$. Let $\Two_{T\cQ}$ and $\Two_{T^*\cQ}$ be the second fundamental forms of $T\cQ$ and $T^*\cQ$ as subbundles of $\cQ\times\cE$. For a vector field $\ttX$, write $\Two(\ttX)$ for the map $\ttY\mapsto\Two_{T\cQ}(\ttX,\ttY)$ and $\Two_*(\ttX)$ for $s_*\mapsto \Two_{T\cQ^*}(\ttX, s_*)$, where $\ttY$ is a vector field and $s_*$ is a one-form. We show
\begin{equation}\Two_*(\ttX) = \sfg\Two(\ttX)\sfg^{-1}.
\end{equation}
They both map one-forms to sections of $\cN_{T^*\cQ}=\cN^*_{T\cQ}$ (recall $\cN_{T\cQ}$ and $\cN^*_{T\cQ}$ are kernels of $\Pi_{T\cQ}$ and $\Pi_{T^*\cQ}$). Let $s_{\perp}$ be a section of $\cN_{T^\cQ}$ then
$$\begin{gathered}s_{\perp}.\sfg\Two(\ttX)\sfg^{-1}s_* = s_{\perp}.\sfg(\dI_{\cE} -\Pi)\nabla^{\cE}_{\ttX} (\sfg^{-1}s_*)=s_{\perp}.(\dI_{\cE} -\Pi)^{\ft}\sfg\nabla^{\cE}_{\ttX} (\sfg^{-1}s_*)\\
=  s_{\perp}.\sfg\nabla^{\cE}_{\ttX} (\sfg^{-1}s_*)=\rD_{\ttX}(s_{\perp}.\sfg\sfg^{-1}s_*) - \nabla^{\cE}_{\ttX} s_{\perp}.s_* = s_{\perp}. \nabla^{\cE^*}_{\ttX}s_* =s_{\perp}.\Two_*(\ttX)s_*.
\end{gathered}$$
Let $\ttZ,\ttW=\sfg^{-1}s_*$ be vector fields, the Gau{\ss}-Codazzi equation follows from \cref{eq:affineGauss}
\begin{equation}
\langle\Rc^{\cE}_{\ttX,\ttY}\ttZ,\ttW\rangle_{\sfg}= \langle\Rc_{\ttX\ttY}\ttZ, \ttW\rangle_{\sfg} + \langle\Two(\ttX, \ttZ), \Two(\ttY, \ttW)\rangle_{\sfg}-\langle\Two(\ttY, \ttZ), \Two(\ttX, \ttW)\rangle_{\sfg}.\label{eq:GaussCodazzi}
\end{equation}
If we use a projection $\Pi_1$ that is not metric compatible and construct a connection using \cref{eq:defineNablaTW}, the resulting connection $\nabla^1$ is also not metric compatible. The cotangent bundle $T^*_1\cQ$ using $\Pi_1^{\ft}$ is not $\sfg T\cQ$. We can verify $\Pi_1^{\ft}\sfg$, considered as a map from $T\cQ$ to $T^*_1\cQ$ is invertible, denoted by $\ttP$, and the conjugate connection to $\nabla^1$ on $T\cQ$ is $\ttP^{-1}\nabla^{T_1^*\cQ}\ttP$, where $\nabla^{T_1^*\cQ}$ is given by \cref{eq:defineConj}. 

\section{Horizontal bundles and lifts}\label{sec:HLift}
When studying systems with symmetries, the dynamic happens on a quotient manifold, which may not have a convenient embedded coordinate. To study quotients in the embedded method developed here, following \cite{Edelman_1999}, we use the concept of horizontal bundles. The symmetries (or the kernel of a submersion in general) generate vertical vector fields, we are interested in the dynamic transversal to the vertical vector fields.
\subsection{Vertical integrability and horizontal lifts of Hamilton vector fields.}
Assume $T\cQ$ and $T^*\cQ$ decompose to subbundles, $T\cQ =\cH \oplus \cV$, $T^*\cQ =\cH^* \oplus \cV^*$, where both $(\cH, \cH^*)$ and $(\cV, \cV^*)$ are pairs of dual bundles. Thus, the projection $\Pi$ to $T\cQ$ has a decomposition $\Pi = \ttH \oplus \ttV, \Pi^{\ft} = \ttH^{\ft}\oplus\ttV^{\ft}$, with $\ttH, \ttV$ are projection functions from $\cQ\times\cE$ to $\cH, \cV$, respectively, and $\ttH\ttV = \ttV\ttH = 0$. We call $\cV$ and $\cH$ the vertical and horizontal bundles.

Identify $T\cH$ and $T\cH^*$ with submanifolds of $\cE^4$ from \cref{eq:TW,eq:TWs},
  \begin{gather}T\cH=\{(q, h, \Delta_q, \Delta_h)|\;(q, h)\in \cH, \Delta_q\in T_q\cQ, \Delta_h=\ttH(q)\Delta_h + \ttH'(q, \Delta_q)h\}\\
    T\cH^*=\{(q, p_h, \Delta_q, \Delta_{p_h})|\;(q, p_h)\in \cH^*, \Delta_q\in T_q\cQ, \Delta_{p_h}=\ttH^{\ft}(q)\Delta_{p_h}+ \ttH'(q, \Delta_q)^{\ft}p_h\}\label{eq:TcHs}
  \end{gather}
$\cH^*$ is a {\it presymplectic submanifold} of $T^*\cQ$. In \cref{eq:TcHs}, decompose $\Delta_q =\epsilon_q +\delta_q$ with $\epsilon_q=\ttV(q)\Delta_q$, $\delta_q= \ttH(q)\Delta_q$, we have $T_{q, p_h}\cH^*= \cV_{q, p_h}\cH^*\oplus \cH_{q, p_h}\cH^*$ with
  \begin{gather}\cV_{q, p_h}\cH^*:=\{(\epsilon_q,  \ttB(q, p_h)\epsilon_q) | \epsilon_q\in \cV_q \},\label{eq:VHs}\\
\text{ where }  \ttB(q, p_h)\epsilon_q := - \ttH(q)^{\ft}\{X\mapsto \ttV'(q; X)^{\ft}p_h \}^{\ft}\epsilon_q +\ttH'(q; \epsilon_q)^{\ft}p_h,\label{eq:ttB}\\
    \cH_{q, p_h}\cH^*:=\{(\delta_q, \delta_{p_h}) |  \delta_q\in \cH_q, \delta_{p_h} = \ttH(q)^{\ft}\delta_{p_h} + \ttH'(q; \delta_q)^{\ft}p_h\},\\
    (\Delta_q, \Delta_{p_h}) = (\epsilon_q, \ttB(q, p_h)\epsilon_q)
    + (\delta_q, \Delta_{p_h} - \ttB(q, p_h)\epsilon_q).
  \end{gather}
Here, $\cV_{q, p_h}\cH^*$ is constructed as the kernel of the presymplectic form on $T_{q, p_h}\cH^*$:
\begin{proposition}Assume $\cV$ is {\bf integrable}, i.e. the Lie bracket of two vertical vector fields is vertical, then $\Omega(\cV_{q, p_h}\cH^*, T_{q, p_h}\cH^*)$ vanishes and
  $\cV_{q, p_h}\cH^*$ is the kernel of $\Omega$ restricted to $T_{q, p_h}\cH^*$. Restricted to $\cH_{q, p_h}\cH^*$, $\Omega$ is nondegenerated. \label{prop:Hnondegen}

The map $\epsilon_q\mapsto (\epsilon_q, \ttB(q, p_h)\epsilon_q)$ is bijective between $\cV_q$ and $\cV_{q, p_h}\cH^*$.
\end{proposition}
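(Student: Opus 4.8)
The plan is to compute everywhere with the ambient symplectic form $\Omega((X_q,X_p),(Y_q,Y_p)) = X_q.Y_p - X_p.Y_q$ on $\cE^2$, which induces the Poincar\'e form on $T^*\cQ$ and restricts to $T_{q,p_h}\cH^*$. First I would record the elementary identities obtained by differentiating the operator relations $\ttH^2=\ttH$, $\ttV^2=\ttV$, $\ttH\ttV=0$, $\ttV\ttH=0$ along $T_q\cQ$ --- notably $\ttH(q)\ttH'(q;\Delta_q)\ttH(q)=0$ (Weingarten, \cref{eq:Wein}) and its transpose $\ttH(q)^{\ft}\ttH'(q;\Delta_q)^{\ft}\ttH(q)^{\ft}=0$, as well as $\ttH(q)\ttV'(q;\Delta_q)w=-\ttH'(q;\Delta_q)\ttV(q)w$ --- and then translate the hypothesis: extending $\xi,\eta\in\cV_q$ to the vertical fields $x\mapsto\ttV(x)\xi$, $x\mapsto\ttV(x)\eta$, their bracket at $q$ is $\ttV'(q;\xi)\eta-\ttV'(q;\eta)\xi$, and applying $\ttH(q)$ shows that \emph{$\cV$ integrable is equivalent to the symmetry} $\ttH'(q;\xi)\eta=\ttH'(q;\eta)\xi$ for $\xi,\eta\in\cV_q$. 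This is the only place the hypothesis enters. The last assertion is then immediate: since $p_h=\ttH(q)^{\ft}p_h$, the transposed Weingarten identity shows $(\epsilon_q,\ttB(q,p_h)\epsilon_q)$ satisfies the defining relation of $T_{q,p_h}\cH^*$, so $\epsilon_q\mapsto(\epsilon_q,\ttB(q,p_h)\epsilon_q)$ is a well-defined map into $T_{q,p_h}\cH^*$, injective (recover $\epsilon_q$ from the first slot), and surjective onto $\cV_{q,p_h}\cH^*$ by its very definition \cref{eq:VHs}.

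Next I would prove $\Omega$ vanishes on $\cV_{q,p_h}\cH^*\times T_{q,p_h}\cH^*$. Fix $v=(\epsilon_q,\ttB(q,p_h)\epsilon_q)$ with $\epsilon_q\in\cV_q$ and $(\Delta_q,\Delta_{p_h})\in T_{q,p_h}\cH^*$, and write $\delta_q=\ttH(q)\Delta_q\in\cH_q$, $\bar\epsilon_q=\ttV(q)\Delta_q\in\cV_q$. Substituting the constraint $\Delta_{p_h}=\ttH(q)^{\ft}\Delta_{p_h}+\ttH'(q;\Delta_q)^{\ft}p_h$ and the definition \cref{eq:ttB} of $\ttB$ into $\Omega(v,(\Delta_q,\Delta_{p_h}))=\epsilon_q.\Delta_{p_h}-\ttB(q,p_h)\epsilon_q.\Delta_q$, using $\ttH(q)\epsilon_q=0$ and rewriting the $\{X\mapsto\ttV'(q;X)^{\ft}p_h\}^{\ft}$ term by the index-raising identity \cref{eq:PsiX} (with $\Psi=\ttV^{\ft}$), I expect to reach $\Omega(v,(\Delta_q,\Delta_{p_h}))=p_h.\ttH'(q;\Delta_q)\epsilon_q+p_h.\ttV'(q;\delta_q)\epsilon_q-p_h.\ttH'(q;\epsilon_q)\Delta_q$. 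Decomposing $\Delta_q=\delta_q+\bar\epsilon_q$: the two $\bar\epsilon_q$-contributions cancel by the integrability symmetry; the terms $p_h.\ttH'(q;\delta_q)\epsilon_q$ and $p_h.\ttV'(q;\delta_q)\epsilon_q$ cancel because $p_h=\ttH(q)^{\ft}p_h$ turns the latter into $p_h.\ttH(q)\ttV'(q;\delta_q)\epsilon_q=-p_h.\ttH'(q;\delta_q)\epsilon_q$; and the surviving $-p_h.\ttH'(q;\epsilon_q)\delta_q$ vanishes by Weingarten for $\cH$ since $\delta_q\in\cH_q$. Hence $\cV_{q,p_h}\cH^*$ lies in the kernel of $\Omega|_{T_{q,p_h}\cH^*}$.

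For the remaining claims I would parametrize $\cH_{q,p_h}\cH^*$ by $\cH_q\oplus\cH_q^*$ via $(\delta_q,p^{\cH})\mapsto(\delta_q,\,p^{\cH}+\ttH'(q;\delta_q)^{\ft}p_h)$, a linear isomorphism by the same transposed-Weingarten check. Computing $\Omega$ on two such elements, the cross terms $p_h.\ttH'(q;\delta_q^{(j)})\delta_q^{(i)}$ vanish (Weingarten for $\cH$, as $\delta_q^{(i)}\in\cH_q$), leaving the canonical pairing $\Omega=\delta_q^{(1)}.p^{(2)}-\delta_q^{(2)}.p^{(1)}$ on $\cH_q\oplus\cH_q^*$, which is nondegenerate precisely because $(\cH_q,\cH_q^*)$ is a dual pair. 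Finally, using the splitting $T_{q,p_h}\cH^*=\cV_{q,p_h}\cH^*\oplus\cH_{q,p_h}\cH^*$ together with the vanishing of the previous step and antisymmetry of $\Omega$, any element of the kernel of $\Omega|_{T_{q,p_h}\cH^*}$ must have its $\cH_{q,p_h}\cH^*$-component in the kernel of $\Omega|_{\cH_{q,p_h}\cH^*}$, which is $\{0\}$; hence the kernel is exactly $\cV_{q,p_h}\cH^*$, yielding both the kernel description and nondegeneracy on $\cH_{q,p_h}\cH^*$.

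I expect the main obstacle to be purely organizational: keeping the four differentiated-idempotent identities for $\ttH$ and $\ttV$ straight, applying the index-raising convention \cref{eq:PsiX} to $\{X\mapsto\ttV'(q;X)^{\ft}p_h\}^{\ft}$ correctly (only its pairing against $T_q\cQ$ matters once it is contracted with $\delta_q\in\cH_q$, so the non-uniqueness of the representative is harmless), and pinpointing the single $\cV_q\times\cV_q$ cross term where integrability is genuinely invoked --- everything else is forced by the Weingarten relations and $p_h=\ttH(q)^{\ft}p_h$.
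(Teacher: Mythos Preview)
Your proposal is correct and uses the same toolkit as the paper (the differentiated idempotent identities, Weingarten for $\ttH$, the index-raising convention, and integrability in the form $\ttH'(q;\xi)\eta=\ttH'(q;\eta)\xi$ for $\xi,\eta\in\cV_q$), so it is essentially the same approach. The only organizational difference is that the paper computes the full $\Omega$-pairing on $T_{q,p_h}\cH^*$ and solves for the kernel directly, whereas you split the argument into (i) $\cV_{q,p_h}\cH^*$ is isotropic, (ii) $\Omega$ on $\cH_{q,p_h}\cH^*$ pulls back to the canonical form on $\cH_q\oplus\cH_q^*$ via your parametrization, and (iii) combine via the direct sum; your step (ii) is a small addition that makes the nondegeneracy on $\cH_{q,p_h}\cH^*$ explicit rather than inferred from the kernel description.
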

\begin{proof}For $(\epsilon_q, \epsilon_p), (\Delta_q, \Delta_p)\in T_{q, p_h}\cH^*$, expand
  $$\begin{gathered}\epsilon_q. \Pi(q)^{\ft}\Delta_p - \Delta_q. \Pi(q)^{\ft}\epsilon_p =
    \epsilon_q. (\ttH(q)^{\ft}\Delta_p + \Pi(q)\ttH'(q, \Delta_q)^{\ft}p_h) - \Delta_q. \Pi(q)^{\ft}\epsilon_p\\
    = \epsilon_q. \ttH(q)^{\ft}\Delta_p + \epsilon_q . (\rD_{\Delta_q}(\Pi(q)^{\ft}\ttH(q)^{\ft})p_h - \Pi'(q, \Delta_q)^{\ft}\ttH(q)^{\ft}p_h)  - \Delta_q. \Pi(q)^{\ft}\epsilon_p\\
    =\epsilon_q. \ttH(q)^{\ft}\Delta_p + \epsilon_q.( \ttH'-\Pi' )(q, \Delta_q)^{\ft}p_h - \Delta_q. \Pi(q)^{\ft}\epsilon_p=
    \\ \epsilon_q . \ttH(q)^{\ft}\Delta_p -\Delta_q . (\{X\mapsto \ttV'(q; X)^{\ft}p_h \}^{\ft}\epsilon_q + \Pi(q)^{\ft}\epsilon_p).
  \end{gathered}$$
Thus, $(\epsilon_q, \epsilon_p)$ belongs to the kernel if and only if $\epsilon_q\in\cV_q$ and
\begin{equation}\Pi(q)^{\ft}\epsilon_p = - \Pi(q)^{\ft}\{X\mapsto \ttV'(q; X)^{\ft}p_h \}^{\ft}\epsilon_q.\label{eq:VerticalHam}\end{equation}
Since $\ttH\Pi=\ttH$, this implies $\ttH(q)^{\ft}\epsilon_p = - \ttH(q)^{\ft}\{X\mapsto \ttV'(q; X)^{\ft}p_h \}^{\ft}\epsilon_q$, hence
  $$\epsilon_p =
\ttH(q)^{\ft}\epsilon_p + \ttH'(q, \epsilon_q)^{\ft}p_h=
  - \ttH(q)^{\ft}\{X\mapsto \ttV'(q; X)^{\ft}p_h \}^{\ft}\epsilon_q + \ttH'(q, \epsilon_q)^{\ft}p_h.$$
To show this choice satisfies \cref{eq:VerticalHam}, assume $\epsilon_q\in\cV_q$,  we need
  $$\begin{gathered}\Pi(q)^{\ft}\{- \ttH(q)^{\ft}\{X\mapsto \ttV'(q; X)^{\ft}p_h \}^{\ft}\epsilon_q + \ttH'(q, \epsilon_q)^{\ft}p_h\} = - \Pi(q)^{\ft}\{X\mapsto \ttV'(q; X)^{\ft}p_h \}^{\ft}\epsilon_q,\\
\Leftrightarrow  \Pi(q)^{\ft}\ttH'(q, \epsilon_q)^{\ft}p_h + \ttV(q)^{\ft}\{X\mapsto \ttV'(q; X)^{\ft}p_h \}^{\ft}\epsilon_q=0,\\
\Leftrightarrow  \ttV(q)^{\ft}\ttH'(q, \epsilon_q)^{\ft}p_h + \ttV(q)^{\ft}\{X\mapsto \ttV'(q; X)^{\ft}p_h \}^{\ft}\epsilon_q=0
\end{gathered}$$
where between the first and second line we use $-\Pi^{\ft}\ttH^{\ft} +\Pi^{\ft} = \ttV^{\ft}$, between the second and third line, we expand $\Pi(q)^{\ft} = \ttH(q)^{\ft} + \ttV(q)^{\ft}$ then use \cref{eq:Wein}. This is equivalent to the pairing below is zero for all $\phi_q\in \cV_q$
$$\begin{gathered} \phi_q . (
\ttH'(q, \epsilon_q)^{\ft}p_h + \{X\mapsto \ttV'(q; X)^{\ft}p_h \}^{\ft}\epsilon_q) =   \ttH'(q, \epsilon_q) \phi_q . p_h + \ttV'(q; \phi_q)^{\ft}p_h . \epsilon_q \\
=\ttH'(q, \epsilon_q) \phi_q . p_h + p_h . \ttV'(q; \phi_q)\epsilon_q \\
= \Pi'(q, \epsilon_q) \phi_q . p_h- \ttV'(q, \epsilon_q) \phi_q . p_h + p_h . \ttV'(q; \phi_q)\epsilon_q\\
=  p_h . (\ttV'(q; \phi_q)\epsilon_q - \ttV'(q, \epsilon_q) \phi_q) =  p_h.[\kappa_{\phi_q}, \kappa_{\epsilon_q}]_{y=q} =0
\end{gathered}$$
where $\Pi'(q, \epsilon_q) \phi_q . p_h=0$ from \cref{eq:Wein}, and $\kappa_{\omega}$ denotes the vector field $y\mapsto \ttV(y)\omega, y\in\cQ$ for $\omega\in\cE$. The final equality follows from the integrability of $\cV$, as $[\kappa_{\phi_q}, \kappa_{\epsilon_q}]_{y=q}$ is vertical and $p_h$ is horizontal.
\end{proof}
Elements of $\cV_{q, p_h}\cH^*$ are called {\it $\cH^*$-vertical vectors}, of $\cH_{q, p_h}\cH^*$ are {\it $\cH^*$-horizontal vectors}. This decomposition of $T_{q, p_h}\cH$ induces a bundle decomposition $T\cH^*= \cV\cH^*\oplus \cH\cH^*$. The kernel of the pairing $\Omega$ on $T\cH^*$ consists of $\cH^*$-vertical vectors. A {\it $\cH^*$-vertical vector field} is a section of $\cV\cH^*$ and a {\it $\cH^*$-horizontal vector field} is a section of $\cH\cH^*$.

For a function $G$ is on $\cH^*$, as in \cref{thm:HVF}, for a $\cH^*$-vector field $\Phi$, we want to realize $dG. \Phi$ by $\Omega(\ttZ_G, \Phi)$ for a $\cH^*$-horizontal vector field $\ttZ_G$. Since the $\Omega$-pairing is zero on $\cH^*$-vertical vectors, we will assume $dG.\Phi=0$ for $\cH^*$-vertical vectors.  We call such a function $G$ on $\cH^*$ a {\it $\cH^*$-Hamiltonian}. In \cref{eq:horep} below, given $q\in\cQ, p_h\in \cH_q, \omega_p\in\cE$, we have an element $\Psi\in \cE$ such that for all $\delta_q\in \cH_q$
$$\Psi .\delta_q = \ttH'(q; \delta_q)^{\ft}p_h . \omega_p.$$
We write $\{\cH_q\ni X\mapsto \ttH'(q; X)^{\ft}p_h\}^{\ft}\omega_p$ for $\Psi$ to indicate $\delta_p\in\cH_q$, instead of $T_q\cQ$.
\begin{theorem}For $(\omega_q, \omega_p)\in\cE^2$, there exists a unique vector $(e_q, e_p)\in \cH_{q, p_h}\cH^*$ 
  \begin{gather} e_q := \ttH(q)\omega_p,\\
    e_p := -\ttH^{\ft}(q)\left(\omega_q + \{\cH_q\ni X\mapsto \ttH'(q; X)^{\ft}p_h\}^{\ft}\omega_p\right) + \ttH'(q, e_q)^{\ft}p_h\label{eq:horep}
    \end{gather}    
  such that for all $\xi_q, \xi_p\in \cH_{q, p_h}\cH^*$
\begin{equation}  
  \omega_q .\xi_q + \omega_p . \xi_p = \Omega((e_q, e_p), (\xi_q, \xi_p)) = e_q.\xi_p -e_q . \xi_p.\label{eq:Hpairing}\end{equation}
For a $\cH^*$-Hamiltonian $G$, there exists a unique $\cH^*$-horizontal vector field $\ttZ_{G}$
\begin{equation}\label{eq:ZG}
\ttZ_G(q, p_h) := \begin{bmatrix}\ttH(q)G_{p}\\
    \ttH'(q; \ttH(q)G_p)^{\ft}p_h - \ttH(q)^{\ft}\left(G_q +  \{\cH_q\ni X\mapsto \ttH'(q; X)^{\ft}p_h\}^{\ft}G_p \right)\end{bmatrix}
\end{equation}
such that $\rD_{\Phi}G = dG.\Phi = G_q.\Phi_q + G_{p}.\Phi_p = \Omega(\ttZ_G, \Phi)$ for all $\cH^*$-horizontal vector field $\Phi= (\Phi_q, \Phi_p)$, where $(G_q, G_{p})$ are the partial gradients of $G$ in $\cE^2$.
\end{theorem}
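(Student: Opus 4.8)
The plan is to run the argument of \cref{thm:HVF} with the tangent-bundle projection $\Pi$ replaced by the horizontal projection $\ttH$, with $T_{q,p}T^*\cQ$ replaced by $\cH_{q,p_h}\cH^*$, and with the global nondegeneracy of $\Omega$ replaced by its nondegeneracy on $\cH_{q,p_h}\cH^*$ from \cref{prop:Hnondegen}. First I would check that the pair $(e_q,e_p)$ of \cref{eq:horep} lies in $\cH_{q,p_h}\cH^*$. By construction $e_q=\ttH(q)\omega_p\in\cH_q$, so it remains to verify the defining relation $e_p=\ttH(q)^{\ft}e_p+\ttH'(q;e_q)^{\ft}p_h$ coming from \cref{eq:TcHs}. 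Differentiating the idempotency $\ttH(q)^2=\ttH(q)$ in a direction $\Delta\in T_q\cQ$ and taking the operator adjoint gives $\ttH(q)^{\ft}\ttH'(q;\Delta)^{\ft}+\ttH'(q;\Delta)^{\ft}\ttH(q)^{\ft}=\ttH'(q;\Delta)^{\ft}$; evaluating this on $p_h\in\cH^*_q=\Imag(\ttH(q)^{\ft})$, where $\ttH(q)^{\ft}p_h=p_h$, yields the ``adjoint Weingarten'' identity $\ttH(q)^{\ft}\ttH'(q;\Delta)^{\ft}p_h=0$. Applying $\ttH(q)^{\ft}$ to \cref{eq:horep}, using $(\ttH(q)^{\ft})^2=\ttH(q)^{\ft}$ and this identity, collapses the expression to exactly the required relation. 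Along the way one also notes that $\ttH(q)^{\ft}\{\cH_q\ni X\mapsto\ttH'(q;X)^{\ft}p_h\}^{\ft}\omega_p$ is well defined even though the index-raised vector is only pinned down as a functional on $\cH_q$: two representatives differ by a vector annihilating $\cH_q$, which $\ttH(q)^{\ft}$ kills by the dual of \cref{eq:Piq0}.

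Next I would verify the pairing \cref{eq:Hpairing} against an arbitrary $(\xi_q,\xi_p)\in\cH_{q,p_h}\cH^*$, so $\xi_q\in\cH_q$ and $\xi_p=\ttH(q)^{\ft}\xi_p+\ttH'(q;\xi_q)^{\ft}p_h$. Expanding $e_q.\xi_p-e_p.\xi_q$ with \cref{eq:horep}: the contribution $\ttH'(q;e_q)^{\ft}p_h.\xi_q=p_h.\ttH'(q;e_q)\xi_q$ vanishes by the Weingarten lemma \cref{eq:Wein} for the bundle $\cH$ (using $e_q\in\cH_q\subset T_q\cQ$, $\xi_q\in\cH_q$ and $p_h=\ttH(q)^{\ft}p_h$); the $\omega_q$ term gives $\ttH(q)^{\ft}\omega_q.\xi_q=\omega_q.\ttH(q)\xi_q=\omega_q.\xi_q$ since $\xi_q\in\cH_q$; the index-raising term gives $\{\cH_q\ni X\mapsto\ttH'(q;X)^{\ft}p_h\}^{\ft}\omega_p.\xi_q=\omega_p.\ttH'(q;\xi_q)^{\ft}p_h$ by its defining property on $\cH_q$; and $e_q.\xi_p=\ttH(q)\omega_p.\xi_p=\omega_p.\ttH(q)^{\ft}\xi_p=\omega_p.\xi_p-\omega_p.\ttH'(q;\xi_q)^{\ft}p_h$ by the constraint on $\xi_p$. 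Summing, the two occurrences of $\omega_p.\ttH'(q;\xi_q)^{\ft}p_h$ cancel and we obtain $\omega_q.\xi_q+\omega_p.\xi_p$, which is \cref{eq:Hpairing}.

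Uniqueness of $(e_q,e_p)$ within $\cH_{q,p_h}\cH^*$ is then immediate: a second solution would differ from it by an element of $\cH_{q,p_h}\cH^*$ that $\Omega$-pairs to zero with all of $\cH_{q,p_h}\cH^*$, hence is zero by the nondegeneracy in \cref{prop:Hnondegen}. For the final assertion, extend the $\cH^*$-Hamiltonian $G$ to a neighbourhood in $\cE^2$ with gradient $(G_q,G_p)$ and apply the first part with $(\omega_q,\omega_p)=(G_q,G_p)$; the resulting $(e_q,e_p)$ is precisely the right-hand side of \cref{eq:ZG} (and is smooth in $(q,p_h)$, hence a vector field), and for any $\cH^*$-horizontal field $\Phi=(\Phi_q,\Phi_p)$, \cref{eq:Hpairing} gives $\rD_\Phi G=G_q.\Phi_q+G_p.\Phi_p=\Omega(\ttZ_G,\Phi)$. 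Independence of the extension holds because $G_q.\Phi_q+G_p.\Phi_p=dG.\Phi$ for $\Phi$ tangent to $\cH^*$ depends only on $G|_{\cH^*}$: a different extension changes $(G_q,G_p)$ only by a pair pairing to zero with every $\cH^*$-horizontal vector, hence leaves $\ttZ_G$ unchanged; uniqueness among $\cH^*$-horizontal fields is then the pointwise uniqueness above. The $\cH^*$-Hamiltonian hypothesis, that $dG$ annihilates $\cH^*$-vertical vectors, is what additionally makes $\Omega(\ttZ_G,\cdot)=dG$ on the vertical part, both sides vanishing there by \cref{prop:Hnondegen}.

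I expect the main obstacle to be the bookkeeping of the several Weingarten-type cancellations for $\ttH$ and for $\ttH^{\ft}$ at once, together with the fact that $\{\cH_q\ni X\mapsto\ttH'(q;X)^{\ft}p_h\}^{\ft}$ is defined only as a functional on $\cH_q$ rather than on $T_q\cQ$, so one must check that wherever it appears it is either paired with a vector of $\cH_q$ or composed with $\ttH(q)^{\ft}$.
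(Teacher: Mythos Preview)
Your proposal is correct and follows essentially the same route as the paper's proof: both mimic the computation of \cref{thm:HVF} with $\Pi$ replaced by $\ttH$, use the Weingarten identity for $\ttH$ (in its adjoint form $\ttH(q)^{\ft}\ttH'(q;\Delta)^{\ft}p_h=0$) to kill the $\ttH'(q;e_q)^{\ft}p_h.\xi_q$ term, unfold the constraint on $\xi_p$, and then invoke the nondegeneracy of $\Omega$ on $\cH_{q,p_h}\cH^*$ from \cref{prop:Hnondegen} for uniqueness. Your version is somewhat more careful than the paper's in explicitly verifying $(e_q,e_p)\in\cH_{q,p_h}\cH^*$ and in addressing the well-definedness of the $\cH_q$-restricted index raising, but these are refinements of the same argument rather than a different approach.
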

\begin{proof}Uniqueness follows from \cref{prop:Hnondegen}. Similar to \cref{thm:HVF},
      $$\begin{gathered}e_q. \xi_p - e_p . \xi_q = \ttH(q)\omega_p .\xi_p -
(\ttH'(q; e_q)^{\ft}p_h - \ttH(q)^{\ft}\{\omega_q +  \{X\mapsto \ttH'(q; X)^{\ft}p_h\}^{\ft}\omega_p \}).\xi_q\\
      = \omega_p . \ttH(q)^{\ft} \xi_p +\ttH(q)^{\ft} \omega_q . \xi_q + \{X\mapsto \ttH'(q; X)^{\ft}p_h\}^{\ft}\omega_p .\ttH(q)\xi_q\\
      = \omega_p . \ttH(q)^{\ft} \xi_p + \omega_q . \xi_q + \omega_p.\ttH'(q; \xi_q)^{\ft}p_h = \omega_p .\xi_p + \omega_q .\xi_q
    \end{gathered}$$
  where we use $\ttH(q)^{\ft}\ttH'(q; e_q)^{\ft}p_h =0$, the definition of $\{X\mapsto \ttH'(q; X)^{\ft}p\}^{\ft}h_p$ next, and $\xi_p = \ttH(q)^{\ft} \xi_p + \ttH'(q; \xi_q)^{\ft}p$ in the last, giving us  \cref{eq:Hpairing}.

It remains to apply the above to $(G_q, G_p)$ at any point $(q, p_h)\in \cH^*$.
\end{proof}
The $\cH^*$-Hamilton flow is a solution curve $\gamma(t) = (q(t), p_h(t))$ on $\cH^*$ of
\begin{equation}(\dot{q}, \dot{p_h}) = \ttZ_G(q(t), p_h(t)).\end{equation}
We have $\dot{\gamma}\in T\cH^*$. By construction, $\dot{q}= \ttH(q)G_p(q, p_h) \in \cH_q$, so $\dot{\gamma}\in \cH\cH^*$.

The decomposition $T\cQ=\cH\oplus\cV$ typically arises when we have a submersion $\qq:\cQ\mapsto\cB$ from $\cQ$ onto a manifold $\cB$. The inverse image $\qq^{-1}(b)\subset\cQ$, (called a fiber, as fibration is a special case) for $b\in\cB$ is a submanifold, and the integrable vertical bundle $\cV$ comes from tangent spaces of $\qq^{-1}(b)$. The splitting $T\cQ = \cH\oplus\cV$ usually follows from choosing a Riemannian or semi-Riemannian metric on $\cQ$ (that is nondegenerate on $\cV$), and define $\cH$ as the orthogonal complement.

\begin{proposition}\label{prop:hlift}Let $\qq:\cQ\to\cB$ be a differentiable submersion, thus, for $b\in \cB$, $\qq^{-1}(b)$ is a submanifold of dimension $\dim\cQ-\dim\cB$, and the tangent bundles $T\qq^{-1}(\qq(q))$ ($q\in\cQ$) form a subbundle $\cV\subset T\cQ$, the vertical bundle. Let $\Pi$ be the projection function onto $T\cQ\subset \cE^2$. Assume there is a subbundle $\cH$ such that $T\cQ = \cH\oplus\cV$ and $\Pi=\ttH\oplus\ttV$ is the corresponding decomposition of $\Pi$. Then $\qq'(q)$ maps $\cH_q$ bijectively onto $T_{\qq(q)}\cB$, and $\ttH(q)^{\ft}\qq'(q)^*$ is an invertible map from $T^*_{\qq(q)}\cB$ onto $\cH_q^*$. For $(q, p_h)\in\cH^*$ define
\begin{equation}  \qq_{\cH^*}(q, p_h) := (\qq(q), w)\in T^*_{\qq(q)}\cB\quad\text{ for }w\in T_{\qq(q)}^*\cB,\quad \ttH(q)^{\ft}\qq'(q)^*w = p_h.\label{eq:qqHs}
\end{equation}
Then $\qq_{\cH^*}$ is a differentiable submersion, a presymplectic map from $\cH^*$ onto $T^*\cB$, the kernel of $\qq_{\cH^*}'(q, p_h)$ is $\cV_{q, p_h}\cH^*$ and $\cH_{q, p_h}\cH^*$ maps bijectively onto $T_{\qq_{\cH^*}(q, p_h)}T^*\cB$.

If $F$ is a Hamiltonian on $T^*\cB$, then $G = F\circ \qq_{\cH^*}$ is a $\cH^*$-Hamiltonian. Via $\qq_{\cH^*}$, the $\cH^*$-Hamiltonian vector field $\ttZ_G$ maps to the $T^*\cB$ Hamilton vector field $\ttX_F$ and horizontal $\ttZ_G$ flows maps to $\ttX_F$ flows.
\end{proposition}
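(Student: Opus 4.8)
The plan is to handle \cref{prop:hlift} in four moves: (i) the two linear–algebra facts that make $\qq_{\cH^*}$ well defined and smooth; (ii) the presymplectic property via a pullback of the tautological one–form; (iii) the identification of $\ker\qq_{\cH^*}'$ with $\cV\cH^*$; and (iv) the Hamiltonian/flow statement, read off from the variational characterizations of $\ttZ_G$ and $\ttX_F$. For (i): since $\qq$ is a submersion, $\qq'(q)$ is onto $T_{\qq(q)}\cB$ with kernel $\cV_q$, so the splitting $T_q\cQ=\cH_q\oplus\cV_q$ forces $\qq'(q)|_{\cH_q}$ to be a bijection onto $T_{\qq(q)}\cB$. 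For $w\in T^*_{\qq(q)}\cB$ and $\delta\in\cH_q$ one computes $\langle \ttH(q)^{\ft}\qq'(q)^*w,\delta\rangle_{\cE}=\langle w,\qq'(q)\delta\rangle$; if this vanishes for all $\delta\in\cH_q$ then $w=0$ by the previous bijection, so $\ttH(q)^{\ft}\qq'(q)^*$ is injective from $T^*_{\qq(q)}\cB$ into $\cH_q^*=\Imag(\ttH(q)^{\ft})$, hence a bijection since $\dim\cB=\dim\cH_q^*$. This makes \cref{eq:qqHs} meaningful; smoothness follows since $q\mapsto(\ttH(q)^{\ft}\qq'(q)^*)^{-1}$ is smooth, and $\qq_{\cH^*}$ is onto because $(q,\ttH(q)^{\ft}\qq'(q)^*w)\mapsto(\qq(q),w)$.

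For (ii), write $\theta$ for the tautological one–form of $T^*\cQ$ restricted to $\cH^*$ and $\theta_\cB$ for that of $T^*\cB$, so $\theta_{(q,p_h)}(\delta_q,\delta_{p_h})=\langle p_h,\delta_q\rangle$. A tangent vector $(\delta_q,\delta_{p_h})\in T_{q,p_h}\cH^*$ pushes forward under $\qq_{\cH^*}$ to a vector with base component $\qq'(q)\delta_q$, on which $\theta_\cB$ gives $\langle w,\qq'(q)\delta_q\rangle$; using $\qq'(q)\ttV(q)\delta_q=0$ and \cref{eq:qqHs} this equals $\langle \ttH(q)^{\ft}\qq'(q)^*w,\delta_q\rangle=\langle p_h,\delta_q\rangle$. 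Hence $\qq_{\cH^*}^*\theta_\cB=\theta$, and applying $-d$ gives $\qq_{\cH^*}^*\Omega_\cB=\Omega$, where $\Omega_\cB$ and $\Omega$ denote the Poincar\'e forms of $T^*\cB$ and of $T^*\cQ$ restricted to $\cH^*$. For (iii): a vector $(\delta_q,\delta_{p_h})\in\cV_{q,p_h}\cH^*$ has $\delta_q\in\cV_q$, so its $\qq_{\cH^*}'$-image has zero base component; conversely, feeding $\delta_q=0$, $\delta_{p_h}\in\cH_q^*$ into \cref{eq:TcHs} realizes every vertical direction of $T^*\cB$, while the first bijection realizes every base direction, so $\qq_{\cH^*}'(q,p_h)$ is onto and $\dim\ker\qq_{\cH^*}'(q,p_h)=\dim\cQ-\dim\cB=\dim\cV_{q,p_h}\cH^*$. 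Since $\cV_{q,p_h}\cH^*=\ker\Omega|_{T_{q,p_h}\cH^*}$ by \cref{prop:Hnondegen}, $\qq_{\cH^*}^*\Omega_\cB=\Omega$, and $\Omega_\cB$ is nondegenerate, for $v\in\cV_{q,p_h}\cH^*$ and arbitrary $u$ one has $\Omega_\cB(\qq_{\cH^*}'v,\qq_{\cH^*}'u)=\Omega(v,u)=0$ with $\qq_{\cH^*}'u$ ranging over all of $T_{\qq_{\cH^*}(q,p_h)}T^*\cB$, whence $\qq_{\cH^*}'v=0$; by the dimension count this inclusion is an equality, and the complementary $\cH_{q,p_h}\cH^*$ maps isomorphically onto $T_{\qq_{\cH^*}(q,p_h)}T^*\cB$.

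For (iv): if $\Phi$ is a $\cH^*$-vertical vector field, then $dG.\Phi=dF.(\qq_{\cH^*}'\Phi)=0$ since $\Phi\in\cV\cH^*=\ker\qq_{\cH^*}'$, so $G=F\circ\qq_{\cH^*}$ is a $\cH^*$-Hamiltonian and $\ttZ_G$ is given by \cref{eq:ZG}. Then for any $\eta\in T_{\qq_{\cH^*}(q,p_h)}T^*\cB$ I would take the unique $\xi\in\cH_{q,p_h}\cH^*$ with $\qq_{\cH^*}'\xi=\eta$ from step (iii) and compute $\Omega_\cB(\qq_{\cH^*}'\ttZ_G,\eta)=\Omega(\ttZ_G,\xi)=dG.\xi=dF.\eta$; by the uniqueness part of \cref{thm:HVF} applied to $F$ on $T^*\cB$ this yields $\qq_{\cH^*}'(q,p_h)\ttZ_G(q,p_h)=\ttX_F(\qq_{\cH^*}(q,p_h))$, i.e.\ $\ttZ_G$ maps to $\ttX_F$. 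Differentiating $t\mapsto\qq_{\cH^*}(\gamma(t))$ along an integral curve $\gamma$ of $\ttZ_G$ then shows its image solves the Hamilton equation for $F$, so horizontal $\ttZ_G$-flows map to $\ttX_F$-flows.

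The main obstacle I anticipate is organizational rather than deep: matching the a priori defined $\cH^*$-vertical and $\cH^*$-horizontal subspaces with $\ker\qq_{\cH^*}'$ and a complement — which requires the dimension count, \cref{prop:Hnondegen}, and nondegeneracy of $\Omega_\cB$ used together — and keeping track that $\qq_{\cH^*}$ is only a submersion, so "$\ttZ_G$ maps to $\ttX_F$" must be interpreted fiberwise through the isomorphism $\cH_{q,p_h}\cH^*\xrightarrow{\sim}T_{\qq_{\cH^*}(q,p_h)}T^*\cB$, not as a genuine push-forward of vector fields. The tautological-form pullback computation in (ii) is the other place where the embedded-coordinate formulas have to be handled with care.
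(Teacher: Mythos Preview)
Your proposal is correct. The main difference from the paper is in step (ii): you prove $\qq_{\cH^*}^*\Omega_\cB=\Omega|_{\cH^*}$ by first showing $\qq_{\cH^*}^*\theta_\cB=\theta|_{\cH^*}$ via the one-line computation $\langle w,\qq'(q)\delta_q\rangle=\langle \ttH(q)^{\ft}\qq'(q)^*w,\delta_q\rangle=\langle p_h,\delta_q\rangle$ and then applying $-d$, whereas the paper works in a local chart of $\cB$, introduces $K(q)=\qq'(q)\ttH(q)$, and verifies presymplecticity by a direct expansion involving $K'(q,\cdot)$ and the Hessian $\qq^{(2)}$, with several cancellations. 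Your route is shorter and more conceptual; the paper's route is in keeping with its explicit global-coordinate style and yields along the way a concrete formula for $\qq_{\cH^*}'$. Steps (i), (iii), and (iv) are essentially the same in both arguments: the paper also derives $\ker\qq_{\cH^*}'=\cV\cH^*$ from symplecticity, surjectivity of $\qq_{\cH^*}'$, and nondegeneracy of $\Omega_\cB$, and obtains the correspondence $\ttZ_G\leftrightarrow\ttX_F$ from the chain rule and uniqueness.
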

\begin{proof}At $q\in\cQ$, $\cV_q$ is the kernel of $\qq'(q)$, thus $\qq'(q)$ maps $\cH_q$ bijectively onto $T_{\qq(q)}\cB$, the decomposition of $T^*\cQ$ means $\ttH(q)^{\ft}\qq'(q)^*$ is bijective between $T^*_{\qq(q)}\cB$ and $\cH^*_q$. Thus, \cref{eq:qqHs} shows $\qq_{\cH^*}$ is well-defined and smooth on local charts, hence is smooth. The inverse image of a point $(b, w)$ is the submanifold of $\cH^*$, diffeomorphic to $\qq^{-1}(b)$
  $$\qq_{\cH^*}^{-1}(b, w) = \{(q, \ttH(q)^{\ft}\qq'(q)^*w)| q\in \qq^{-1}(b)\}.$$
We will work with a local chart of $\cB$, identified with an open subset of $\R^d$, $d=\dim\cB$ and use the coordinates in $\R^d$ to consider $\qq$ and $\qq'$ as $\R^d$-valued functions. Let $K(q)$ be the operator $K(q) = \qq'(q)\ttH(q)$ from $\cE$ to $\R^d$. Then $K(q)^{\ft}w = p_h$ and for $(\Delta_q, \delta_p)\in T_{(q, p_h)}\cH^*$,
  $$\qq_{\cH^*}'(q, p_h): (\Delta_q, \delta_p) \mapsto (\qq'(q)\Delta_q, \Delta_w)$$
with $\Delta_w\in\R^d$ is the unique element satisfying $K'(q, \Delta_q)^{\ft}w + K(q)^{\ft}\Delta_w = \delta_p$. Clearly, $\qq'_{\cH^*}(q, p_h)$ is onto as given $(\Delta_b, \Delta_w)\in T_{b, w}T^*\cB$, there is $\delta_q\in \cH_q^*$ such that $\qq'(q)\delta_q = \Delta_b$. Set $\delta_p :=K'(q, \delta_q)^{\ft}w + K(q)^{\ft}\Delta_w$ then tautologically $(\delta_q, \delta_p)$ maps to $(\Delta_b, \Delta_w)$. Thus, $\qq_{\cH^*}$ is a submersion.

To verify $\qq_{\cH^*}(q)$ is presymplectic, let $\Omega_{\cB}$ and $\Omega_{\cQ}$ be the symplectic forms on $T^*\cB$ and $T^*\cQ$, respectively, note $\qq'(q)=\qq'(q)\ttH(q)$ on $T_q\cQ$ as $\qq'(q)\ttV(q)=0$. If $(\Phi_q, \phi_p)$ with $K'(q, \Phi_q)^{\ft}w + K(q)^{\ft}\Phi_w = \phi_p$ is another tangent vector for $\Phi_w\in\R^d$
  $$\begin{gathered}
    \Omega_{\cB}(\qq_{\cH^*}'(q, p_h)(\Delta_q, \delta_p), \qq_{\cH^*}'(q, p_h)(\Phi_q, \phi_p))
    =\qq'(q)\Delta_q . \Phi_w - \qq'(q)\Phi_q . \Delta_w\\
    =\qq'(q)\ttH(q)\Delta_q . \Phi_w - \qq'(q)\ttH(q)\Phi_q . \Delta_w
    =\Delta_q . K(q)^{\ft}\Phi_w - \Phi_q . K(q)^{\ft}\Delta_w   \\
    = \Delta_q . (\phi_p - K'(q, \Phi_q)^{\ft}w) - \Phi_q . (
    \phi_p-K'(q, \Delta_q)^{\ft}w )  \\
    = \Omega_{\cQ}((\Delta_q, \delta_p), (\Phi_q, \phi_p)) - \{K'(q, \Phi_q)\Delta_q - K'(q, \Delta_q)\Phi_q\} . w.
  \end{gathered}$$
We want the last group to vanish. Let $\qq^{(2)}$ be the Hessian of $\qq$, expand
$$K'(q, \Phi_q)\Delta_q =  \qq^{(2)}(q; \Phi_q, \ttH(q)\Delta_q) + \qq'(q)\ttH'(q; \Phi_q)\Delta_q,$$
then since both $q^{(2)}(q; \Phi_q, \Delta_q)$ and $\Pi'$ are symmetric bilinear, we need
  $$\begin{gathered}
  q^{(2)}(q; \Phi_q, \ttH(q)\Delta_q) - q^{(2)}(q; \Delta_q, \ttH(q)\Phi_q)
  + \qq'(q)\{\ttH'(q; \Phi_q)\Delta_q - \ttH'(q; \Delta_q)\Phi_q \}= 0\\
\Leftrightarrow q^{(2)}(q; \Phi_q, \ttV(q)\Delta_q) - q^{(2)}(q; \Delta_q, \ttV(q)\Phi_q)
  + \qq'(q)\{\ttV'(q; \Phi_q)\Delta_q - \ttV'(q; \Delta_q)\Phi_q \}= 0.
\end{gathered}$$
The last line follows from $\rD_{\Delta_q}(\qq'(q)\ttV(q)\Phi_q) = 0$ and $\rD_{\Phi_q}(\qq'(q)\ttV(q)\Delta_q) = 0$.

The description of the kernel of $\qq'_{\cH^*}(q, p_h)$ follows from symplecticity, if $\qq'_{\cH^*}(q, p_h)(\kappa_q, \kappa_p) = 0$, for $(\kappa_q, \kappa_p)\in T_{q, p_h}\cH^*$ then for $(\Delta_q, \delta_p)\in T_{q, p_h}\cH^*$, the symplectic pairing of the images in $\cB$ is zero, hence $\Omega_{\cQ}((\kappa_q, \kappa_p), (\Delta_q, \delta_p))=0$, or $(\kappa_q, \kappa_p)$ is vertical. Conversely, if $(\kappa_q, \kappa_p)$ is vertical, then the symplectic pairing on $\cH^*$ with any $(\Delta_q, \delta_p)$ is zero, so the pairing of the images is zero. Since $\Omega_{\cB}$ is nondegenerate and $\qq'_{\cH^*}$
is onto, it maps $(\kappa_q, \kappa_p)$ to zero. From here, the chain rule shows the directional derivative of $G = F\circ\qq_{\cH^*}$ in a $\cH^*$-vertical direction is zero, for $G$ is a $\cH^*$-Hamiltonian. The correspondence between $\ttZ_G$ and $\ttX_h$ follow from symplecticity and the chain rule.
\end{proof}
Assume the conditions of \cref{prop:hlift}, then a tangent vector to $\cB$ at $b=\qq(q)$ is the image of the unique horizontal vector at $q$, the {\it horizontal lift}. We can lift vector fields to horizontal vector fields and Hamilton flows to $\cH^*$-Hamilton flows.

An application is the lift of metric-potential systems, including the usual results on lifts of geodesics. If $\sfg_{\cB}$ and $\sfg_{\cQ}$ are metrics on $\cB$ and $\cQ$ such that $\qq$ is a {\it Riemannian submersion} \cite{ONeill1966}, that is, $\cH$ is orthogonal to $\cV$ via the metric $\sfg_{\cQ}$ and $\qq'(q)$ is an isometry between $\cH_q$ to $T_{\qq(q)}\cB$, set $F(b, p_b) = \frac{1}{2}p_b.\sfg_{\cB}(b)^{-1} p_b$ for $(b, p_b)\in T^*_b\cB$, we get a lifted version of \cref{prop:metricPotential}, with $\ttH$ in place of $\Pi$.

We next compute the lift of curvature. For $\omega\in\cE$, let $\tth_{\omega}$ be the horizontal vector field $\tth_{\omega}:y \mapsto \ttH(y)\omega$ for $y\in\cQ$. Let $\sfg$ be the metric operator for $\cQ$ on $\cE$ and let $\langle, \rangle_{\sfg}$ denote the $\sfg$-pairing, so for $\omega_1,\omega_2\in \cE$, $\langle\omega_1, \omega_2\rangle_{\sfg, q} = \omega_1. \sfg(q)\omega_2 $.
\begin{proposition}\label{prop:curvlift}Let $\qq:\cQ\mapsto \cB$ be a Riemannian submersion with $\cQ\subset\cE$. For  $q\in\cQ, \xi \in \cH_q, \omega\in\cE$, define $\sA_{\xi}(q), \sAd_{\xi}(q)$ by the first equality below, where $\Gamma$ is the Christoffel function of the Levi-Civita connection on $\cQ\subset \cE$.
\begin{gather}
  \sA_{\xi}(q)\omega := \ttV(q)(\nabla_{\tth_{\xi}}\tth_{\omega})_{y=q} = -\ttV'(q, \xi)\ttH(q)\omega + \ttV(q)\Gamma(q; \xi, \ttH(q)\omega),\\
  \sAd_{\xi}(q)\omega :=-\ttH(q)(\nabla_{\xi}\ttV(y)\omega)_{y=q} = \ttH'(q, \xi)\ttV(q)\omega - \ttH(q)\Gamma(q; \xi, \ttV(q)\omega).\label{eq:sAdomega}
\end{gather}
Then $\sA_{\xi}$ and $\sA_{\xi}^{\dagger}$ are adjoints under the $\sfg$-pairing. For $\omega\in\cH_q$, $\sA_{\xi}$ evaluates to the O'Neill's tensor. If $\xi,\eta\in \cH_q, \epsilon\in\cV_q$ 
   \begin{gather}\label{eq:oneilLie}
     \sA_{\xi}(q)\eta = \frac{1}{2}\ttV(q)[\tth_{\xi}, \tth_{\eta}]_{y=q} = \frac{1}{2}\{\ttH'(q; \xi)\eta - \ttH'(q, \eta)\xi\}.
   \end{gather}
If $\nabla$ is the Levi-Civita connection on $T\cQ$, then $\nabla^{\cH}_{\ttX}\ttY := \ttH\nabla_{\ttX}\ttY$ for a vector field $\ttX$ and horizontal vector field $\ttY$ on $\cQ$ is a connection on the horizontal bundle $\cH$. Let $\GammaH$ be its Christoffel function, extended bilinearly to $\cE$. At $q\in\cQ, \xi, \eta, \phi\in \cH_q$, let $\bRcB_{\xi\eta}\phi$ be the horizontal lift of the Riemannian curvature $\RcB_{\qq(\xi), \qq(\eta)}\qq(\phi)$ on $\cB$ then  
\begin{gather}      
  \bRcB_{\xi\eta}\phi = 
        \rD_{\xi}\GammaH(\eta, \phi) -\rD_{\eta}\GammaH(\xi, \phi)
   +\GammaH(\xi, \GammaH(\eta, \phi)) - \GammaH(\eta, \GammaH(\xi, \phi)) - 2\sAd_{\phi}\sA_{\xi}\eta,\label{eq:curGammaH}\\
   \bRcB_{\xi\eta}\phi = \ttH(q) \RcQ_{\xi\eta}\phi - 2 \sAd_{\phi} \sA_{\xi}\eta + \sAd_{\xi} \sA_{\eta}\phi +  \sAd_{\eta} \sA_{\phi}\xi.\label{eq:ONeil13}
      \end{gather}
\end{proposition}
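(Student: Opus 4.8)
The plan is to prove the five assertions in the order stated, leaning on \cref{theo:cur_embed} and the affine Gau{\ss}--Codazzi theorem \cref{theo:GaussCod}. Write $\tth_\omega(y)=\ttH(y)\omega$ and $\kappa_\omega(y)=\ttV(y)\omega$ for the horizontal and vertical extensions of $\omega\in\cE$, and note $\tth_\xi(q)=\xi$ when $\xi\in\cH_q$. First I would get the explicit $\Gamma$-expressions for $\sA_\xi,\sAd_\xi$: expanding $\nabla=\rD+\Gamma$ gives $\nabla_{\tth_\xi}\tth_\omega|_{y=q}=\ttH'(q;\xi)\omega+\Gamma(q;\xi,\ttH(q)\omega)$ and $\nabla_\xi(\ttV(y)\omega)|_{y=q}=\ttV'(q;\xi)\omega+\Gamma(q;\xi,\ttV(q)\omega)$; applying $\ttV(q)$ resp.\ $-\ttH(q)$ and converting $\ttV(q)\ttH'(q;\xi)=-\ttV'(q;\xi)\ttH(q)$, $\ttH(q)\ttV'(q;\xi)=-\ttH'(q;\xi)\ttV(q)$ (differentiate $\ttV\ttH\equiv0\equiv\ttH\ttV$, an analogue of the Weingarten lemma \cref{eq:Wein}) yields the two displayed formulas. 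For \cref{eq:oneilLie}, torsion-freeness gives $\ttV(q)[\tth_\xi,\tth_\eta]_{y=q}=\ttV(q)(\nabla_{\tth_\xi}\tth_\eta-\nabla_{\tth_\eta}\tth_\xi)_{y=q}=\sA_\xi(q)\eta-\sA_\eta(q)\xi$, and $[\tth_\xi,\tth_\eta]_{y=q}=\ttH'(q;\xi)\eta-\ttH'(q;\eta)\xi$ is already vertical because Weingarten for $\ttH$ makes $\ttH(q)\ttH'(q;\xi)\eta=0=\ttH(q)\ttH'(q;\eta)\xi$ (using $\xi,\eta\in\cH_q$). So \cref{eq:oneilLie} reduces to the skew-symmetry $\sA_\xi(q)\eta=-\sA_\eta(q)\xi$ on horizontal pairs.

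Next, the adjointness, the O'Neill identification and this skew-symmetry all flow from metric compatibility of $\nabla$. Extending $\omega_1\in\cH_q,\omega_2\in\cV_q$ by $\tth_{\omega_1},\kappa_{\omega_2}$, the $\sfg$-orthogonality of $\cH$ and $\cV$ gives $\langle\tth_{\omega_1},\kappa_{\omega_2}\rangle_\sfg\equiv0$; differentiating along $\xi$, applying metric compatibility and the $\sfg$-self-adjointness of $\ttH,\ttV$ (the splitting being $\sfg$-orthogonal) produces $\langle\sA_\xi\omega_1,\omega_2\rangle_\sfg=\langle\omega_1,\sAd_\xi\omega_2\rangle_\sfg$. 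That $\sA_\xi\omega$ (for $\omega\in\cH_q$) is O'Neill's $A$-tensor is then O'Neill's formula $A_XY=\cV\nabla_XY$ for horizontal fields together with tensoriality of $A$, so that $\tth_\omega$ is an admissible extension. For the skew-symmetry I would extend $\xi,\eta$ to \emph{basic} horizontal fields $X,Y$ (horizontal lifts of fields on $\cB$); then $[X,U],[Y,U]$ are vertical for vertical $U$ and $\langle X,Y\rangle_\sfg$ is constant along fibers, so the Koszul identity for the embedded Levi-Civita connection collapses to $\langle\sA_XY+\sA_YX,U\rangle_\sfg=0$, hence $\sA_\xi\eta=-\sA_\eta\xi$, completing \cref{eq:oneilLie}. (One may instead simply cite O'Neill for this step.)

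That $\nabla^\cH_{\ttX}\ttY:=\ttH\nabla_{\ttX}\ttY$ is a connection on $\cH$ is the instance of \cref{eq:defineNablaTW} with ambient bundle $T\cQ$ (with its Levi-Civita connection, self-conjugate under the $\sfg$-pairing), $\cW:=\cH$, $\Pi_\cW:=\ttH$; \cref{theo:GaussCod} gives it as a connection with affine second fundamental form $\Two_{\cH}$, and on horizontal arguments $\Two_{\cH}(\xi,\eta)=(\dI-\ttH(q))\nabla_\xi\tth_\eta|_q=\ttV(q)\nabla_\xi\tth_\eta|_q=\sA_\xi(q)\eta$ (since $\ttH(q)+\ttV(q)=\dI$ on $T_q\cQ$). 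With $\GammaH$ its Christoffel function, \cref{theo:cur_embed} identifies the first four terms of \cref{eq:curGammaH} with the curvature $\Rc^{\cH}_{\xi\eta}\phi$ of $\nabla^\cH$ over $\cQ$. For \cref{eq:curGammaH} I would then compare $\Rc^{\cH}$ — computed with the zero-bracket extensions $y\mapsto\Pi(y)\xi$ — with the lift $\bRcB_{\xi\eta}\phi$ of the base curvature, which by \cref{prop:hlift} and O'Neill's lemma ($\bar\nabla_{\bar X}\bar Y$ lifts to $\ttH\nabla_XY$ for basic $X,Y$) is the same four-term formula evaluated with basic horizontal extensions; the two differ only in the bracket term, $[X,Y]$ versus $\ttH[X,Y]$, and the discrepancy $\ttV[X,Y]_{y=q}=2\sA_\xi\eta$ (by \cref{eq:oneilLie}) enters through $\nabla^\cH_{2\sA_\xi\eta}Z|_q=2\ttH\nabla_{\kappa_{\sA_\xi\eta}}Z|_q=-2\sAd_\phi\sA_\xi\eta$ (using $\ttH\nabla_UZ|_q=-\sAd_ZU$ for $U$ vertical, $Z$ basic horizontal, from $\nabla_UZ-\nabla_ZU=[U,Z]$ vertical). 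Finally \cref{eq:ONeil13} follows by substituting into \cref{eq:curGammaH} the affine Gau{\ss}--Codazzi equation \cref{eq:affineGauss} for $\cH\subset T\cQ$, which writes $\Rc^{\cH}_{\xi\eta}\phi$ in terms of $\ttH\RcQ_{\xi\eta}\phi$ and bilinear terms in $\Two_{\cH}=\sA$ and $\Two_{\cH^*}=\sfg\,\sA\,\sfg^{-1}$ (as in the derivation of \cref{eq:GaussCodazzi}); rewriting these with the $\sfg$-adjoint $\sAd$ and simplifying via $\sA_\xi\phi=-\sA_\phi\xi$ collapses them to $\sAd_\xi\sA_\eta\phi+\sAd_\eta\sA_\phi\xi$.

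The main obstacle I expect is the non-integrability bookkeeping in that last step: distinguishing the intrinsic curvature of $\nabla^\cH$ on $\cQ$ from the horizontal lift of the curvature of $\cB$, pinning down the correction $\ttV[\cdot,\cdot]=2\sA$ with the correct sign, and tracking all $\sfg$-adjoints and skew-symmetries as one passes between \cref{eq:curGammaH}, the affine Gau{\ss}--Codazzi identity and \cref{eq:ONeil13}; the skew-symmetry of $\sA$ on horizontal pairs is the one genuinely classical input whose fully self-contained proof needs the basic-vector-field Koszul argument rather than only the identities established above. Numerical checking as in \cite{NguyenVerify} is the natural safeguard against sign errors here.
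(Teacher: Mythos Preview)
Your proposal is correct and follows essentially the same route as the paper: adjointness via metric compatibility of $\nabla$ and $\sfg$-self-adjointness of $\ttH,\ttV$; \cref{eq:oneilLie} via O'Neill's lemma (you supply the skew-symmetry argument the paper simply cites); the decomposition $\bRcB=\Rc^{\nabla^{\cH}}+\nabla^{\cH}_{\ttV[\tdX,\tdY]}\tdZ$ using basic lifts, with the vertical-bracket correction $\ttV[\tdX,\tdY]=2\sA_\xi\eta$ and $\ttH\nabla_U Z=\ttH\nabla_Z U=-\sAd_Z U$ giving the $-2\sAd_\phi\sA_\xi\eta$ term; then \cref{eq:rc1} for \cref{eq:curGammaH} and the affine Gau{\ss}--Codazzi \cref{eq:affineGauss} (with $\Two_{\cH}=\sA$) plus $\sA_\xi\phi=-\sA_\phi\xi$ for \cref{eq:ONeil13}. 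The only difference is that you spell out a few steps (the Weingarten-type identities $\ttV\ttH'=-\ttV'\ttH$, the Koszul derivation of skew-symmetry) that the paper handles by citation to O'Neill.
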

\begin{proof}By the metric invariance of $\nabla$ and since $\ttV$ and $\ttH$ are self-adjoint under $\sfg$
  $$\begin{gathered}0 = \rD_{\xi}\langle \ttH(y)\omega_1, \ttV(y)\omega_2 \rangle_{\sfg} =
  \langle \nabla_{\tth_{\xi}}\ttH(y)\omega_1, \ttV(y)\omega_2 \rangle_{\sfg} +
  \langle \ttH(y)\omega_1, \nabla_{\tth_{\xi}}\ttV(y)\omega_2 \rangle_{\sfg}\\
  \Rightarrow  \langle \nabla_{\tth_{\xi}}\ttH(y)\omega_1, \ttV(y)\omega_2 \rangle_{\sfg} =-  \langle \ttH(y)\omega_1, \nabla_{\tth_{\xi}}\ttV(y)\omega_2 \rangle_{\sfg}\\
\Rightarrow \langle \ttV(q)(\nabla_{\tth_{\xi}}\tth_{\omega_1})_{y=q}, \omega_2 \rangle_{\sfg} = - \langle \omega_1, \ttH(q)(\nabla_{\tth_{\xi}}\ttV(y)\omega_2)_{y=q} \rangle_{\sfg}
  \end{gathered}$$
Thus, $\sAd_{\xi}$ and $\sA_{\xi}$ are adjoint in the $\sfg$-pairing. Next,
  $$\ttV(q)(\nabla_{\tth_{\xi}}\tth_{\omega})_{y=q} = \ttV(q)(\ttH'(q, \xi)\eta + \Gamma(\xi, \eta)) = - \ttV'(q; \xi)\eta +  \ttV(q)\Gamma(\xi, \eta)$$
  since $0= \ttV'(q; \xi)\ttH(q)\eta + \ttV(q)\ttH'(q, \xi)\eta$ by product rule. Equation (\ref{eq:oneilLie}) follows from \cite[lemma 2]{ONeill1966}, expanding the Lie bracket,  then use $ \ttV(q)\ttH'(q, \xi)\eta = - \ttV'(q; \xi)\eta$
$$\sA_{\xi}\eta = \frac{1}{2}\ttV(q)[\tth_{\xi}, \tth_{\eta}]_{y=q}=\frac{1}{2}\ttV(q)(\ttH'(q; \xi)\eta - \ttH'(q, \eta)\xi)=-\frac{1}{2}(\ttV'(q; \xi)\eta - \ttV'(q, \eta)\xi)
$$
  and expand $\ttV' = \Pi'- \ttH'$, simplifying $\Pi'(q; \xi)\eta = \Pi'(q; \eta)\xi$.

Note $\nabla^{\cH}$ is of the form in \cref{eq:defineNablaTW}. From \cite{ONeill1966}, the horizontal lift of a vector field $\ttX_{\cB}$ on $T\cB$ is called a {\it basic} vector field on $\cQ$, which we denote by $\tdX$. For a Riemannian submersion, $(\nabla_{\ttX_{\cB}}\ttY_{\cB})^{\widetilde{}} = \ttH\nabla_{\tdX}\tdY = \nabla^{\cH}_{\tdX}\tdY, [\ttX_{\cB}, \ttY_{\cB}]^{\widetilde{}} = \ttH[\tdX, \tdY]$. Thus,
\begin{equation}\bRcB_{\ttX_{\cB},\ttY_{\cB}}\ttZ_{\cB} = \nabla^{\cH}_{\tdX}\nabla^{\cH}_{\tdY}\tdZ-\nabla^{\cH}_{\tdY}\nabla^{\cH}_{\tdX}\tdZ - \nabla^{\cH}_{\ttH[\tdX, \tdY]}\tdZ=\Rc^{\nabla^{\cH}}_{\tdX\tdY}\tdZ + \nabla^{\cH}_{\ttV[\tdX, \tdY]}\tdZ\label{eq:RcBexpand}
\end{equation}
where $\Rc^{\nabla^{\cH}}$ is the curvature of $\nabla^{\cH}$ on the bundle $\cH$, given by \cref{eq:rc1} using $\GammaH$ , or by \cref{eq:affineGauss} where $\sA$ is the second fundamental form of $\ttH$ operating on $T\cQ$. From \cite[lemma 3]{ONeill1966}, for three basic vector fields $\tdX, \tdY, \tdZ$ evaluated to $\xi, \eta, \phi$ at $q$
$$\nabla^{\cH}_{\ttV[\tdX, \tdY]}\tdZ = \ttH\nabla_{\tdZ}\ttV[\tdX, \tdY]=-2\sAd_{\tdZ}\sA_{\tdX}\tdY$$
from \cref{eq:sAdomega,eq:oneilLie}. Equations (\ref{eq:curGammaH}) and (\ref{eq:ONeil13}) follow from the two ways (\cref{eq:rc1,eq:affineGauss}) to compute $\Rc^{\nabla^{\cH}}$. Note \cref{eq:ONeil13} is \cite[theorem 2]{ONeill1966}.
\end{proof}
We note to use \cref{eq:curGammaH}, $\GammaH(\Delta, \phi)$ must be valid for {\it tangent} $\Delta$ while $\phi$ is horizontal.
\section{Examples}\label{sec:examples}
\subsection{Rigid body dynamics}
This is well-studied in \cite{LeeLeokMBook,Arnold,LeeLeokM}, but we show our global formulas make the geometric calculation easy. It is well-known that the mechanics is closely related to left-invariant metrics on the special Euclidean group $\SE(n)$. Let $\cQ = \SE(n)$ be identified with the submanifold $\SOO(n)\times \R^n\subset\cE = \R^{n\times n}\times \R^n$, where the special orthogonal group $\SOO(n)$ consists of orthogonal matrices of determinant $1$. The group structure on $\SE(n)$ is defined by
\begin{equation}(U_1, z_1).(U_2, z_2) = (U_1U_2, U_1z_2 + z_1)\text{ for }(U_1, z_1), (U_2, z_2) \in \cQ.
\end{equation}
The $z$ component represents an anchor point of the rigid body, the $U$-component represents the rotation of the body around the anchor point. We assume the anchor point is the center of mass. The identity of this group is $\dI_{\SE(n)} = (\dI_n, 0)$ and $(U, z)^{-1} = (U^{\ft}, -U^{\ft}v)$. A tangent vector at $q=(U, z)\in\cQ$ is a pair $(\Delta_U, \Delta_z)\in \cE=\R^{n\times n}\times \R^n$ satisfying $(U^{\ft}\Delta_U)_{\sym} = 0$ with $A_{\sym} = \frac{1}{2}(A+A^{\ft})$ for $A\in \R^{n\times n}$.

The constraint is given by $\rC(U, z) = U^{\ft}U - \dI_n\in \cE_L = \Herm{n}$, $\Herm{n}$ is the space of symmetric matrices. Let $\oo(n)$ be the Lie algebra of antisymmetric matrices. If $v = (\Delta_U, \Delta_z)$ is a velocity (tangent) vector, then $\rC(U, z)'(\Delta_U, \Delta_z) = (U^{\ft}\Delta_U)_{\sym}$, so a tangent vector satisfies $U^{\ft}\Delta_U\in \oo(n)$.

The (mass) metric has two components, the total mass $m>0$ and the {\it inertia} $\cI$, an invertible operator on $\R^{n\times n}$ satisfying $\cI^{\ft} = \cI$ in the Frobenius inner product, and $\cI(\oo(n))=\oo(n)$. We do not make further assumptions, but $\cI$ is often given by a symmetric matrix in $\R^{n\times n}$, also denoted $\cI$, with positive entries, where the operator $\cI$ acts by Hadamard product,  $\cI(e) =\cI \odot e$ for $e\in\oo(n)$, or $\cI(e)_{ij} = \cI_{ij}e_{ij}$ for $i,j=1\cdots k$ (the diagonal entries have no effect, set to $1$). For $n=3$, the coefficients $\cI_{23}, \cI_{31}, \cI_{12}$ are often denoted $\cI_x, \cI_y, \cI_z$. For the mass metric operator, define $\sfg(\dI, 0)(e, \Delta_z) = (\cI(e), m\Delta_z)$ for $(e, \Delta_z)\in\R^{n\times n}\times \R^n$ and extend it {\it left-invariantly} to $(U, z)\in \SE(n)$ 
\begin{gather}\sfg(U, z)( e, \Delta_z) := (U, z)\sfg(\dI_n, 0)((U, z)^{-1}( e, \Delta_z)) = (U\cI(U^{\ft} e), m \Delta_z),\\
\sfg(U, z)^{-1}( e, \Delta_z) = (U\cI^{-1}(U^{\ft} e), \frac{1}{m}\Delta_z).
\end{gather}  
 It is clear $\sfg$ is a positive definite operator on $\cE$. A cotangent vector $p = (p_U, p_z)$ satisfies $(\cI^{-1}(U^{\ft}p_U))_{\sym} = 0$, thus $T_q^*\cQ$ is identified with $T_q\cQ$.

 In \cref{eq:ProjG}, at $(U, z)\in\SE(n)$, $(\rC')^{\ft}a  = (Ua, 0)\in\cE$ for $a\in\Herm{n}$, $\rC'\sfg^{-1}(\rC')^{\ft}a = \cI^{-1}(a)$. The $\SOO(n)$ component of the projection is
$$e\mapsto e - U\cI^{-1}(U^{\ft}(U\cI(U^{\ft}e)_{\sym})) = e - U(U^{\ft}e)_{\sym} = 
U(U^{\ft}e)_{\asym}
$$
where $A_{\asym}= \frac{1}{2}(A-A^{\ft})$ for $A\in \R^{n\times n}$. The projection $\Pi=\Pi_{\sfg}$ is
\begin{equation}
\Pi(U, z)(e, \Delta_z) = (e - U(U^{\ft}e)_{\sym} , \Delta_z)= (U(U^{\ft}e)_{\asym} , \Delta_z)\label{eq:SEnproj}.
\end{equation}
We are interested in the Hamiltonian $\frac{1}{2}p. \sfg(U, z)^{-1}p + f(q)$ for $q = (U, z)\in \SE(n)$. For tangent vectors $\xi = (\xi_U, \xi_z), \eta=(\eta_U, \eta_z)$, by direct calculations
\begin{gather}(\rD_{\xi_U, \xi_z}\sfg(q)(e, \Delta_z))_U = \xi_U\cI(U^{\ft}e) + U\cI(\xi_U^{\ft}e)\\
  (\chi_{\sfg}(q; \xi, \eta))_U =  \xi_U\cI(\eta_U^{\ft}U) + \eta_U\cI(\xi_U^{\ft}U) \label{eq:chiSE} \\
  (\rD_{\xi}\Pi(q)(e, z))_U = -\xi_U\sym(U^{\ft}e) - U\sym(\xi_U^{\ft}e)\label{eq:DPiRigid}\\
  \Pi(q)\sfg(q)^{-1}(e, \Delta_z) = (U\cI^{-1}(U^{\ft}e)_{\asym}, \frac{1}{m}\Delta_z)\\
  \mrGamma(q; \xi, \eta)_U = \frac{1}{2}U\cI^{-1}\{[U^{\ft}\xi_U,\cI(U^{\ft}\eta_U)] +
          [U^{\ft}\eta_U,\cI(U^{\ft}\xi_U)]\}\label{eq:mrGammaRigid}\\
          \Gamma(q; \xi, \eta)_U =  U\sym(\xi_U^{\ft}\eta_U) + \xi_U\sym(U^{\ft}\eta_U) +
          \mrGamma(q; \xi, \eta) \label{eq:GammaRigid}
\end{gather}
with $[A,B]= AB-BA$ denoting the Lie bracket. The $z$-components are zero otherwise. The derivatives are routine. To verify \cref{eq:chiSE}, $\cI(\oo(n))=\oo(n)$ implies $\cI(U^{\ft}\eta_U)^{\ft} =-\cI(U^{\ft}\eta_U)=\cI(\eta_U^{\ft} U)$; since $\cI$ is self-adjoint, $\Tr\xi_U^{\ft}U\cI(\phi_U^{\ft}\eta_U) =\Tr\cI(\xi_U^{\ft}U)\phi_U^{\ft}\eta_U$ for $\phi = (\phi_U, \phi_z)\in T_{(U, z)}\SE(n)$, thus
$$\begin{gathered}\xi_U . (\rD_{\phi_U, \phi_z}\sfg(q)(\eta_U, \eta_z))_U = \Tr\xi_U^{\ft}
\phi_U\cI(U^{\ft}\eta_U) + \Tr\xi_U^{\ft}U\cI(\phi_U^{\ft}\eta_U)\\
=\Tr\phi_U^{\ft}\xi_U\cI(\eta_U^{\ft}U) + \Tr\phi_U^{\ft}\eta_U\cI(\xi_U^{\ft}U)\}
= (\chi_{\sfg}(q; \xi, \eta))_U . \phi_U.
\end{gathered}
$$
Expansions give us the rest. The Euler-Lagrange equation (\ref{eq:geodesic}) is
\begin{gather}\ddot{U} + U\dot{U}^{\ft}\dot{U} +
U\cI^{-1}[U^{\ft}\dot{U},\cI(U^{\ft}\dot{U})] 
+  U\cI^{-1}(U^{\ft}f_U(q))_{\asym}=0,\\
  m\ddot{z} + f_z(q) =0.
\end{gather}
The angular velocity is $\omega = U^{\ft}\dot{U}$, the first equation reduces to \cite[Equation 7.21]{LeeLeokMBook}
\begin{equation}\cI\dot{\omega} + [\omega, \cI\omega] + (U^{\ft}f_U(q))_{\asym}=0,\end{equation}
For the curvature, we showed in \cite{NguyenJLT} with $A=U^{\ft}\xi_U, B=U^{\ft}\eta_U, C=U^{\ft}\phi_U$
\begin{equation}\begin{gathered}(\Rc_{\xi, \eta}\phi)_U =U\{ -\frac{1}{2}[[A, B], C]_{\cI} +\frac{1}{4}[A, [B, C]_{\cI}]_{\cI} - \frac{1}{4}[B[A, C]_{\cI}]_{\cI}\}\\
\text{ with }  [A, B]_{\cI} := [A, B] + \cI^{-1}[A, \cI(B)] + \cI^{-1}[B, \cI(A)]
\end{gathered}\label{eq:rigidCurv} \end{equation}
using Arnold's method \cite{Arnold}.  We need the below and a permutation of $\eta$ and $\xi$ for \cref{eq:rc2}
$$\begin{gathered}
  (\rD_{\xi}\mrGamma(\eta, \phi))_U = \frac{1}{2}\xi_U\cI^{-1}\{[U^{\ft}\eta_U,\cI(U^{\ft}\phi_U)] +
     [U^{\ft}\phi_U,\cI(U^{\ft}\eta_U)]\}+\\
     \frac{1}{2}U\cI^{-1}\{[\xi_U^{\ft}\eta_U,\cI(U^{\ft}\phi_U)]
     + [U^{\ft}\eta_U,\cI(\xi_U^{\ft}\phi_U)]
       + [\xi_U^{\ft}\phi_U,\cI(U^{\ft}\eta_U)]
       + [U^{\ft}\phi_U,\cI(\xi_U^{\ft}\eta_U)]\}.
\end{gathered}     $$
There is no $z$-component. Instead of an algebraic comparison of \cref{eq:rc2} with \cref{eq:rigidCurv}, we compare them numerically in \cite{NguyenVerify}. We compare them algebraically for $\cI = Id$ below.

If $\cI = Id$, $\Gamma$ simplifies to $\Gamma^a(\xi, \eta)_U = U(\xi_U^{\ft}\eta_U)_{\sym}$. From \cref{eq:rc1}
$$\begin{gathered}(\Rc_{\xi,\eta}\phi)_U= (\rD_{\xi}\Gamma^a(\eta, \phi) - \rD_{\eta}\Gamma^a(\xi, \phi)+ \Gamma^a(\xi, \Gamma^a(\eta, \phi)) - \Gamma^a(\eta, \Gamma^a(\xi, \phi)))_U\\
  = UA(-BC)_{\sym} - UB(-AC)_{\sym} + U((A(BC)_{\sym})_\sym - (B(AC)_{\sym})_{\sym}\\
  =  \frac{1}{4}U(
  -2ABC  - 2ACB + 2BAC +2BCA \\
  + ABC + ACB - BCA - CBA
- BAC - BCA + ACB + CAB)\\
=\frac{1}{4}U(-ABC +BAC -CBA + CAB )
=\frac{1}{4}U( C[A, B] - [A, B]C ) = \frac{1}{4}U[C, [A, B]]
\end{gathered}$$
which is well-known, agreeing with \cref{eq:rigidCurv} as in this case $[\;]_{\cI}=[\;]$ and the Jacobi's identity holds. Alternatively, to use \cref{eq:rc2}, in \cref{eq:GammaRigid} for $\Gamma$, we need to keep the term $\xi_U\sym(U^{\ft}\eta_U)$, since we need a formula for $\Pi'$ valid on $\cE$ as mentioned.

\subsection{Lifting Hamilton vector fields and curvature for a Grassmann manifold}
The formulas for Grassmann manifolds are similar to that of the sphere. The Grassmann manifold $\Gr{n}{k}$ could be considered as a quotient of $\SOO(n)$ by the diagonal block matrix subgroup $\SOO(k)\times \SOO(n-k)$. However, it is more convenient to consider it as a quotient of the Stiefel manifold $\St{n}{k}$ of $n\times k$ orthogonal matrices $Y$ with $Y^{\ft}Y = \dI_k$ by right multiplication by $\SOO(k)$. Using $\lb\; \rb$ to denote equivalent classes
$$\Gr{n}{k} =\{ \lb Y\rb\;|\quad Y^{\ft}Y = \dI_k, Y\sim YU \text{ for } U\in \SOO(k)\}.$$
Here, $\cE = \R^{n\times k}, \cQ = \St{n}{k}, \cB = \Gr{n}{k}$. With the metric $\sfg(q)=\dI_{\cE}$, the tangent and horizontal spaces and projections are  \cite{Edelman_1999}
\begin{gather}T_Y\cQ = T^*_Y\cQ = \{\eta\in \R^{n\times k}| (Y^{\ft}\eta)_{\sym} = 0\},\\
  \cH_Y = \cH^*_Y = \{\eta \in\R^{n\times k} | Y^{\ft}\eta = 0\},\\
  \Pi(Y)\omega = \omega - Y(Y^{\ft}\omega)_{\sym}, \omega\in \R^{n-k},\\
  \ttH(Y)\omega = \omega - YY^{\ft}\omega, \omega\in \R^{n-k}.
\end{gather}
So $\ttH'(Y, \Delta)p = -Y\Delta^{\ft}p$ for $\Delta\in T_Y\cQ, p\in \cH_Y^*$ and for $\omega\in \cE$, since $\Tr Y\Delta^{\ft}p\omega^{\ft} = \Tr\Delta^{\ft}p\omega^{\ft}Y$, we can take $\{X\mapsto \ttH'(q,X)^{\ft}p\}^{\ft} \omega = - p\omega^{\ft}Y.$
Using $Y^{\ft}p = p^{\ft}Y= 0$ to simplify
$$\ttH(q)^{\ft}\{X\mapsto \ttH'(q,X)^{\ft}p\}^{\ft} G_p = -pG_p^{\ft}Y$$
and  $\ttH'(q, \ttH(q)G_p)p = - YG_p^{\ft}p$, the lifted Hamilton equations are similar to \cref{eq:Hamiltonsphere}
\begin{equation}\begin{gathered}
    \dot{Y} = (\dI_{\cE}- YY^{\ft}) G_p,\\
    \dot{p} = pG_p^{\ft}Y -YG_p^{\ft}p  - (\dI_{\cE}-YY^{\ft})G_q.
  \end{gathered}
\end{equation}
For the curvature, with horizontal vectors $\xi, \eta,\phi$, tangent vector $\Delta$ and vertical vector $\epsilon$, $\GammaH(Y; \Delta, v) = Y\Delta^{\ft}v$, $\sA_{\xi}\eta = -\frac{1}{2}Y(\xi^{\ft}\eta - \eta^{\ft}\xi)$, $\sAd_{\phi}\epsilon = -\phi Y^{\ft}\epsilon$
$$\begin{gathered}
  \bRcB_{\xi, \eta}\phi = \xi\eta^{\ft}\phi - \eta\xi^{\ft}\phi + Y\xi^{\ft}Y\eta^{\ft}\phi - Y\eta^{\ft} Y\xi^{\ft}\phi -\phi Y^{\ft}Y(\xi^{\ft}\eta - \eta^{\ft}\xi),\\
  \bRcB_{\xi, \eta}\phi =\xi\eta^{\ft}\phi - \eta\xi^{\ft}\phi  -\phi \xi^{\ft}\eta + \phi\eta^{\ft}\xi.
\end{gathered}  $$
Accounting for a scaling factor of $\frac{1}{2}$, this is the same as \cite[equation 4]{Wong}.

\section{The Kim-McCann metric and a reflector antenna-type cost function}\label{sec:KimMcCann}
In \cite{KimMcCann}, Kim and McCann define a semi-Riemannian metric on a submanifold $\cN$
of the product $\cM\times\bcM$ two manifolds of the same dimension $\cM$ and $\bcM$. On $\cN$, we assume there is a sufficiently smooth function $c=c(\ttq) = c(x, y)$ such that at each $\ttq=(x, y)\in\cN\subset\cM\times\bcM$, the bilinear form $(\xi,\bxi)\mapsto \rD_{\xi}\brD_{\bxi}c(\ttq)$ is nondegenerate for $\xi\in T_x\cM, \bxi\in T_{y}\cM$. Here, $\rD$ denotes the directional derivative in $x$ and $\brD$ denotes the directional derivative in $y$. The pairing of two tangent vectors $\bdxi=(\xi, \bxi), \bdeta=(\eta, \bareta)$ at $\ttq\in\cN\subset \cM\times \bcM$ is defined as
\begin{equation}\langle(\bdxi, \bdeta \rangle_{KM} = -\frac{1}{2}\{(\rD_{\xi}\brD_{\bareta}c)(\ttq) + (\brD_{\bxi}\rD_{\eta}c)(\ttq)\}.\label{eq:KimMcCann}
\end{equation}  
Assuming the nondegeneracy condition, for a tangent vector $\bdxi=(\xi, \bxi)$, define $\bdxi_{x0} = (\xi, 0), \bdxi_{0y} = (0, \bxi)$, the MTW-tensor (also called cost-sectional curvature or cross-curvature) of a vector $\bdxi$ is
\begin{equation}\crosscv(\bdxi) = \langle \Rc_{\bdxi_{x0}\bdxi_{0y}}\bdxi_{0y},\bdxi_{x0}\rangle_{KM},\end{equation}
where $\Rc$ is the curvature of the metric. A condition for regularity of the optimal transport problem associated with the cost $c$ is the cross-curvature to be non-negative on null vectors $\bdxi$, $\|\bdxi\|_{KM} = 0$. Beside examples arising from cost functions related to the Riemannian metric, the reflector antenna cost function, which inspires our construction here, also satisfies this condition.

For two positive integers $k \leq n$, let $\R_*^{n\times k}\subset \R^{n\times k}$ be the manifold of matrices of full rank $k$. Recall $\OO(k)$ is the group of orthogonal matrices in $\R^{k\times k}$ and $\oo(k)$ is its Lie algebra of skew-symmetric matrices. The manifold $\Sd{n}{k}$ of $n\times n$ positive-semidefinite matrices symmetric of fixed rank $k$ could be identified with a quotient of $\R^{n\times k}_*$ by $\OO(k)$ via the map $x\mapsto xx^{\ft}$ for $x\in\R^{n\times k}_*$. It is clear $xx^{\ft}$ is in $\Sd{n}{k}$. On the other hand, if $A\in \Sd{n}{k}$, from the eigenvalue decomposition, we can write $A = xx^{\ft}$ for $x\in \R_*^{n\times k}$, and $xx^{\ft} = zz^{\ft}$ implies $z = xx^{\ft}z(z^{\ft}z)^{-1}=xU$, with $U = x^{\ft}z(z^{\ft}z)^{-1}$ is orthogonal as algebraically $U^{\ft}U = (z^{\ft}z)^{-1}z^{\ft}z= \dI_k$.

Recall the polar decomposition of an invertible matrix $M$ is the decomposition $M=\Sigma U$, where $\Sigma$ is the unique symmetric positive-definite square root of $MM^{\ft}$, $\Sigma^2= MM^{\ft}$. In that case, $U=\Sigma^{-1}M$ is orthogonal as $UU^{\ft} = \Sigma^{-1}\Sigma^2\Sigma^{-1} = I$. Note that the condition $M$ is invertible implies $\Sigma$ and $U$ are uniquely defined and are smooth functions of $M$. The following is immediate from the definition of $\Sigma$.
\begin{lemma} Fix $\alpha > 0$, for $x, y\in \R_*^{n\times k}$ such that $x^{\ft}y$ is invertible, let $x^{\ft}y = \Sigma U$ be the polar decomposition of $x^{\ft}y$. Set $K=\alpha + \Tr\Sigma$ and define
  \begin{equation}c(x, y) := - \log K =  -\log(\alpha+\Tr\Sigma)\label{eq:cAntenna}
  \end{equation}
  then $c(x U_1, y U_2^{\ft}) = c(x, y)$ for $U_1, U_2\in \OO(k)$, thus $c$ is a well-defined function on $\lb x\rb, \lb y\rb\in \R_*^{n\times k}/\OO(k)\equiv \Sd{n}{k}$, where $\lb \rb$ to denote the equivalent class. Define
  \begin{equation} \cB = \cB_{\Sd{n}{k}} := \{(\lb x\rb, \lb y\rb)\in (\Sd{n}{k})^2| x^{\ft}y \text{ is invertible} \}.\label{eq:cBFR}
  \end{equation}
Then $c$ induces a well-defined function on $\cB$ and could be used as a cost function.
\end{lemma}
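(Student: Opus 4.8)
The plan is to reduce the entire statement to the transformation behavior of the symmetric polar factor $\Sigma$ under the two right actions. First I would record that, for $x,y\in\R_*^{n\times k}$ with $M:=x^{\ft}y$ invertible, the factor $\Sigma$ in the polar decomposition $M=\Sigma U$ is the unique symmetric positive-definite square root of $MM^{\ft}=x^{\ft}yy^{\ft}x$; by the remark immediately preceding the lemma it depends smoothly on $M$ on the open locus where $M$ is invertible, so $c=-\log(\alpha+\Tr\Sigma)$ is smooth there and the logarithm is defined since $\alpha+\Tr\Sigma\ge\alpha>0$ (as $\Sigma$ is positive-definite).

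Next comes the invariance computation, which is the heart of the lemma. For $U_1,U_2\in\OO(k)$ one has
\[
(xU_1)^{\ft}(yU_2^{\ft}) = U_1^{\ft}(x^{\ft}y)U_2^{\ft} = U_1^{\ft}\Sigma U U_2^{\ft} = \bigl(U_1^{\ft}\Sigma U_1\bigr)\bigl(U_1^{\ft}U U_2^{\ft}\bigr),
\]
where $U_1^{\ft}\Sigma U_1$ is symmetric positive-definite (an orthogonal conjugate of a symmetric positive-definite matrix) and $U_1^{\ft}U U_2^{\ft}$ is orthogonal. Since $(xU_1)^{\ft}(yU_2^{\ft})$ is invertible — because $x^{\ft}y$ is and $U_1,U_2$ are — uniqueness of the polar decomposition forces its symmetric factor to equal $\tilde\Sigma=U_1^{\ft}\Sigma U_1$. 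By cyclicity of the trace, $\Tr\tilde\Sigma=\Tr(U_1^{\ft}\Sigma U_1)=\Tr\Sigma$, hence $K$ and therefore $c$ are unchanged: $c(xU_1,yU_2^{\ft})=c(x,y)$.

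Finally I would observe that this identity says $c$ is constant on the $\OO(k)\times\OO(k)$-orbits (noting also that $xU_1\in\R_*^{n\times k}$ since $U_1$ is invertible), so it descends to pairs of equivalence classes $(\lb x\rb,\lb y\rb)$ in $\R_*^{n\times k}/\OO(k)\equiv\Sd{n}{k}$; and the defining condition of $\cB$, namely that $x^{\ft}y$ be invertible, is itself orbit-invariant since $(xU_1)^{\ft}(yU_2^{\ft})=U_1^{\ft}(x^{\ft}y)U_2^{\ft}$. Combining these, $c$ yields a well-defined smooth function on $\cB$ that can serve as a cost function; nondegeneracy of the mixed Hessian (needed for it to define a Kim--McCann metric) is deferred to the later computation and is not required here. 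I do not expect any genuine obstacle — each step is a one-line linear-algebra check; the only point requiring a little care is that one may invoke uniqueness of the polar decomposition precisely because all matrices in sight are invertible, which is exactly why the cost is defined on $\cB$ rather than on all of $(\Sd{n}{k})^2$.
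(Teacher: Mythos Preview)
Your proof is correct and matches the paper's approach: the paper states the lemma is ``immediate from the definition of $\Sigma$'' without giving further details, and your argument spells out exactly that immediacy via the transformation $(xU_1)^{\ft}(yU_2^{\ft}) = (U_1^{\ft}\Sigma U_1)(U_1^{\ft}U U_2^{\ft})$, uniqueness of the polar decomposition, and trace invariance under conjugation.
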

Note that on $\R_*^{n\times k}$, the pairing in \cref{eq:KimMcCann} with this function $c$ is degenerated, as any vertical vector $\mathring{\psi} = (xa, yb)$ for $a, b\in \oo(k)$ satisfies $\rD_{xa}c = 0, \brD_{yb}c=0$ as $c$ is equivariance under $\OO(k)$. To facilitate computation, we introduce a semi-Riemannian pairing on the open submanifold
\begin{equation} \cQ = \cQ_{\R_*^{n\times k}} := \{ (x, y)\in (\R_*^{n\times k})^2 |x^{\ft}y \text{ is invertible}\}\subset \cE := (\R^{n\times k})^2.
\label{eq:cQFR}
\end{equation}
It induces the Kim-McCann pairing on $\cB$ corresponding to $c$, and we show the later pairing is in fact nondegenerate. In the following, we use $\mathring{}$ to signify a pair of vectors, while the second component is denoted with $\bar{}$, as in $\bdxi = (\xi, \bxi)$. We will denote the map $(x, y)\mapsto (\lb x\rb, \lb y\rb)$ by $\qq$.

We will focus on the cost in \cref{eq:cAntenna}, but generally the lift could be defined for cost functions of the form $\bbf(\Sigma)$ where $x^{\ft}y = \Sigma U$ is the polar decomposition, for a smooth, scalar function $\bbf$ invariant under the adjoint action of $\OO(k)$ on the submanifold of positive definite matrices $\Sp{k}$ of $\R^{k\times k}$. We only provide the proof for $\bbf(\Sigma) =-\log(\alpha + \Tr\Sigma)$ in proposition \ref{prop:antennaLift} below. The sphere and the Grassmann square-Riemannian distance cost \cite{KimMcCann,Loeper} correspond to $\bbf(\Sigma)=\Tr\arccos^2\Sigma$. Let $\Hess_{\bbf}$ and $\egrad_{\bbf}$ denote the Hessian and gradient of $\bbf$. The general form of the pairing is
  \begin{equation}\begin{gathered}
\langle\bdxi, \bdxi \rangle_{\bbf} = - \Hess_{\bbf}(\Sigma; (U\bxi^{\ft}x)_{\sym}, (\xi^{\ft}yU^{\ft})_{\sym})
- \egrad_{\bbf}(\Sigma).(U\bxi^{\ft}\xi)
\end{gathered}
  \end{equation}
for $\bdxi=(\xi, \bxi)\in \cE := (\R^{n\times k})^2$. If some technical conditions on $\bbf$ are satisfied, then the horizontal condition is $(U\bxi^{\ft}x)_{\asym} = (\xi^{\ft}yU^{\ft})_{\asym} = 0$, hence 
$\rD_{\xi} U = \brD_{\bxi} U=0$ in \cref{eq:DU} below.

Recall a symmetric Lyapunov equation is an equation of the form $\Sigma X + X\Sigma = B$ for given $\Sigma$ and $B$, with $\Sigma$ is a symmetric matrix and $X$ and $B$ are square matrices. The equation has a unique solution if $\Sigma$ is positive definite, which is the case considered here. We have $X = L^{-1}_{\Sigma} B$ if we define the Lyapunov operator as $L_{\Sigma}X := \Sigma X + X\Sigma$. From $X + \Sigma^{-1} X \Sigma = \Sigma^{-1} B$
\begin{equation}\Tr X = \Tr L^{-1}_{\Sigma}B = \frac{1}{2}\Tr(\Sigma^{-1}B).
\end{equation}
Note $L_{\Sigma}$ is a symmetric operator on $\oo(k)$, therefore
\begin{equation}\Tr A L_{\Sigma}^{-1}B_{\asym} = \Tr A_{\asym} L_{\Sigma}^{-1}B_{\asym}
=  \Tr (L_{\Sigma}^{-1}A_{\asym}) B_{skew} = \Tr (L_{\Sigma}^{-1}A_{\asym}) B.\label{eq:LSigSym}\end{equation}
If $Y = L^{-1}_{\Sigma} B_{\asym}$ then $Y$ is antisymmetric and $\Tr B^{\ft}Y = \Tr B^{\ft}_{\asym}Y = -(\Tr\Sigma YY + \Tr Y\Sigma Y)$, or
\begin{equation}\Tr B^{\ft}Y = 2\Tr Y^{\ft} \Sigma Y \geq 0.\label{eq:BY}
\end{equation}
\begin{lemma}For $\ttq = (x, y)\in \cQ$, let $x^{\ft}y = \Sigma U$ be the polar decomposition. For $\bdomg = (\omega, \bomg)\in \cE$
  \begin{gather}\rD_{\omega}\Sigma = \omega^{\ft}y U^{\ft} - 2\Sigma L^{-1}_{\Sigma}(\omega^{\ft}y U^{\ft})_{\asym} ;    \quad \brD_{\bomg}\Sigma = x^{\ft} \bomg U^{\ft} - 2\Sigma L^{-1}_{\Sigma}(x^{\ft} \bomg U^{\ft})_{\asym},\label{eq:DSigma}   \\
\rD_{\omega}(\alpha + \Tr\Sigma) =  \Tr \rD_{\omega}\Sigma =  \Tr \omega^{\ft}y U^{\ft} ;  \quad \Tr \brD_{\bomg}\Sigma =  \Tr x^{\ft} \bomg U^{\ft}, \label{eq:DK}\\
    \rD_{\omega} U = 2L^{-1}_{\Sigma}(\omega^{\ft}y U^{\ft})_{\asym} U;\quad \brD_{\bomg} U = 2L^{-1}_{\Sigma} (x^{\ft}\bomg U^{\ft})_{\asym} U.\label{eq:DU}
  \end{gather}
\end{lemma}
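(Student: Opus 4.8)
The plan is to differentiate the defining relation $M:=x^{\ft}y=\Sigma U$ directly. The paragraph preceding the statement already guarantees that $\Sigma$ and $U$ are smooth functions of the invertible matrix $M$, and $\cQ$ is open in $\cE$, so all directional derivatives in question exist as ordinary ambient derivatives. Writing $\dot M$ for the derivative of $M$ in the chosen direction, perturbing $x$ by $\omega$ gives $\dot M=\omega^{\ft}y$ while perturbing $y$ by $\bomg$ gives $\dot M=x^{\ft}\bomg$; thus the barred formulas are the same computation as the unbarred ones and I only carry out the latter.

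First I would differentiate $\Sigma U=M$ together with $UU^{\ft}=\dI_k$. From the second relation, $\dot U U^{\ft}+U\dot U^{\ft}=0$, so $S:=\dot U U^{\ft}\in\oo(k)$ and $\dot U=SU$. Substituting this into $\dot\Sigma U+\Sigma\dot U=\dot M$ and multiplying on the right by $U^{\ft}$ yields the single identity $\dot\Sigma+\Sigma S=\dot M U^{\ft}$.

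Next I would split this identity into symmetric and skew-symmetric parts. Since $\Sigma$ is symmetric, so is $\dot\Sigma$; since $S$ is skew and $\Sigma$ symmetric, $(\Sigma S)_{\asym}=\tfrac12(\Sigma S+S\Sigma)=\tfrac12 L_{\Sigma}S$, while $(\Sigma S)_{\sym}=\tfrac12(\Sigma S-S\Sigma)$. Reading off the skew part gives $(\dot M U^{\ft})_{\asym}=\tfrac12 L_{\Sigma}S$, and since $L_{\Sigma}$ is invertible on $\oo(k)$ for $\Sigma$ positive definite (as recalled just before the statement), $S=2L_{\Sigma}^{-1}(\dot M U^{\ft})_{\asym}$; reinstating $\dot U=SU$ and $\dot M=\omega^{\ft}y$ yields \cref{eq:DU}. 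Solving the identity for $\dot\Sigma$ gives $\dot\Sigma=\dot M U^{\ft}-\Sigma S=\dot M U^{\ft}-2\Sigma L_{\Sigma}^{-1}(\dot M U^{\ft})_{\asym}$, which is \cref{eq:DSigma}; one checks for consistency that the right side is automatically symmetric precisely because $2(\dot M U^{\ft})_{\asym}=L_{\Sigma}S=\Sigma S+S\Sigma$. Finally, taking the trace of $\dot\Sigma=\dot M U^{\ft}-\Sigma S$ and using $\Tr(\Sigma S)=0$ (the trace of a symmetric matrix times a skew one vanishes) together with $\alpha$ being constant gives $\rD_{\omega}(\alpha+\Tr\Sigma)=\Tr\dot\Sigma=\Tr(\dot M U^{\ft})=\Tr(\omega^{\ft}yU^{\ft})$, which is \cref{eq:DK}.

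There is essentially no obstacle here: the only points needing care are the bookkeeping of which factor is symmetric and which is skew when splitting $\dot\Sigma+\Sigma S=\dot M U^{\ft}$, and the invocation of invertibility of the Lyapunov operator $L_{\Sigma}$ on $\oo(k)$, both of which are already set up in the text. The barred identities in \cref{eq:DSigma}, \cref{eq:DK} and \cref{eq:DU} follow verbatim by replacing $\dot M=\omega^{\ft}y$ with $\dot M=x^{\ft}\bomg$.
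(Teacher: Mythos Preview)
Your proof is correct. The approach differs somewhat from the paper's: the paper \emph{verifies} the formula for $\rD_\omega\Sigma$ by applying $L_\Sigma$ to the proposed right-hand side and checking that the result agrees with $L_\Sigma(\rD_\omega\Sigma)$, the latter being computed by differentiating $\Sigma^2=x^{\ft}yy^{\ft}x$; only afterwards does it differentiate $\Sigma U=x^{\ft}y$ to obtain $\rD_\omega U$. You instead \emph{derive} both formulas simultaneously from $\Sigma U=M$ and $UU^{\ft}=\dI_k$, extracting $S=\dot U U^{\ft}$ first from the skew part and then reading off $\dot\Sigma$. Your route is slightly more direct and avoids the detour through $\Sigma^2$; the paper's route has the minor advantage that the symmetry of the proposed $\rD_\omega\Sigma$ need not be checked separately, since it is manifestly of the form $L_\Sigma^{-1}$ applied to a symmetric matrix. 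Both arguments rely on the same invertibility of $L_\Sigma$ already set up in the text.
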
  
\begin{proof}
Apply $L_{\Sigma}$ to the proposed expression for $\rD_{\omega}\Sigma$
$$\begin{gathered}
\Sigma \omega^{\ft}y U^{\ft} + \omega^{\ft}y U^{\ft}\Sigma
- 2\Sigma^2 L^{-1}_{\Sigma}(\omega^{\ft}y U^{\ft})_{\asym}
- 2L^{-1}_{\Sigma}(\omega^{\ft}y U^{\ft})_{\asym}\Sigma\\
=\Sigma \omega^{\ft}y U^{\ft} + \omega^{\ft}y U^{\ft}\Sigma
-2\Sigma L_{\Sigma}L^{-1}_{\Sigma}(\omega^{\ft}y U^{\ft})_{\asym}\\
= \Sigma \omega^{\ft}y U^{\ft} + \omega^{\ft}y U^{\ft}\Sigma
- \Sigma(\omega^{\ft}y U^{\ft} - U y^{\ft}\omega)
= \omega^{\ft} y y^{\ft} x + x^{\ft}yy^{\ft} \omega
=(\rD_{\omega}\Sigma)\Sigma + \Sigma(\rD_{\omega}\Sigma),
\end{gathered}
$$
(the last equality is by differentiating $\Sigma^2 = x^{\ft}yy^{\ft}x$), which proves the first part of \cref{eq:DSigma}. The second follows similarly. Equation (\ref{eq:DU}) follows by differentiating $\Sigma U = x^{\ft}y$
$$\begin{gathered}\rD_{\omega}(\Sigma U) = (\rD_{\omega}\Sigma) U + \Sigma (\rD_{\omega} U) = \omega^{\ft}y\\
  \Rightarrow (\omega^{\ft}y U^{\ft} - 2\Sigma L^{-1}_{\Sigma} (\omega^{\ft}y U^{\ft})_{\asym})U + \Sigma (\rD_{\omega} U) = \omega^{\ft}y \\
  \Rightarrow \rD_{\omega} U = 2 L^{-1}_{\Sigma} (\omega^{\ft}y U^{\ft})_{\asym}U,
\end{gathered}$$
 and similarly for $(\brD_{\bomg}\Sigma) U + \Sigma (\brD_{\bomg} U) = x^{\ft}\bomg$.
\end{proof}
\begin{proposition}\label{prop:antennaLift}
  With $(x, y)\in \cQ$ (defined by \cref{eq:cQFR}), let $x^{\ft}y = \Sigma U$ be the polar decomposition. Let $K = \alpha + \Tr\Sigma$. For $\bdxi_i = (\xi_i,\bxi_i), i=1,2$, the pairing
  \begin{equation}
\langle \bdxi_1, \bdxi_2 \rangle_* :=  - \frac{1}{2K^2}(\Tr(\xi_1^{\ft}yU^{\ft})\Tr(\bxi_2^{\ft}xU) + \Tr(\xi_2^{\ft}yU^{\ft})\Tr(\bxi_1^{\ft}xU)) 
            + \frac{1}{2K}(\Tr(\xi_1^{\ft}\bxi_2 U^{\ft}) + \Tr(\xi_2^{\ft}\bxi_1U^{\ft}))
\end{equation}
  is nondegenerated, with $\langle \bdxi_1, \bdxi_2\rangle_* = \bdxi_1 . \sfg \bdxi_2$ where ``$.$'' is the trace inner product on $\cE=(\R^{n\times k})^2$ and for $\bdomg =(\omega, \bomg) \in \cE$
  \begin{gather}
    \sfg \bdomg := (- \frac{1}{2K^2}\Tr (\bomg^{\ft}x U)yU^{\ft} + \frac{1}{2K}\bomg U^{\ft}, - \frac{1}{2K^2}(\Tr(\omega^{\ft}yU^{\ft})xU + \frac{1}{2K}\omega U),\\
    \sfg^{-1}\bdomg := (2K \bomg U^{\ft} + \frac{2K}{\alpha} \Tr(y^{\ft}\bomg)x,
    2K\omega U + \frac{2K}{\alpha}\Tr(x^{\ft}\omega)y).
  \end{gather}
With this metric, the map $(x, y)\mapsto (\lb x\rb, \lb y\rb)$ is a semi-Riemannian submersion onto $\cB$ defined by \cref{eq:cBFR}, 
where $\cB$ is equipped with the Kim-McCann metric defined by $c(\lb x\rb, \lb y\rb) = -\log(\alpha + \Tr\Sigma)$. A vector $\bdxi=(\xi, \bxi) \in (\R^{n\times k})^2$ is horizontal if and only if $x^{\ft}\bxi U^{\ft}$ and $y^{\ft}\xi U$ are symmetric. In particular, $\rD_{\xi} U$ and $\brD_{\bxi} U$ vanish for horizontal $\bdxi$.
\end{proposition}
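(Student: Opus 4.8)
The plan is to work on the open manifold $\cQ\subset\cE$ of \cref{eq:cQFR} (where $T\cQ=\cE$) and to transfer everything to $\cB$ along the product submersion $\qq=q_1\times q_1$ with $q_1(x)=xx^{\ft}$. First I would settle the purely algebraic claims. Substituting $\bdomg=\bdxi_2$ into the stated formula for $\sfg\bdomg$ and pairing with $\bdxi_1$ in the trace inner product of $\cE=(\R^{n\times k})^2$ reproduces $\langle\bdxi_1,\bdxi_2\rangle_{*}$ term by term (using $\Tr(x^{\ft}\bareta U^{\ft})=\Tr(\bareta^{\ft}xU)$ and similar cyclic identities), which also exhibits $\sfg$ as symmetric. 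Then I would compute $\sfg(\sfg^{-1}\bdomg)$ using $U^{\ft}U=UU^{\ft}=\dI_k$, $y^{\ft}x=U^{\ft}\Sigma$ and $\Tr\Sigma=K-\alpha$; this returns $\bdomg$ (the $\alpha^{-1}$ in $\sfg^{-1}$ is legitimate since $K=\alpha+\Tr\Sigma>0$ and $U$ is defined on all of $\cQ$). Hence $\sfg$ is an invertible symmetric operator on $\cE$ and $\langle\cdot,\cdot\rangle_{*}$ is nondegenerate there.

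Next I would identify the vertical bundle of $\qq$ as $\cV_{(x,y)}=\{(xa,yb):a,b\in\oo(k)\}$ and compute $\cV^{\perp_{\sfg}}$. Pairing $(\xi,\bxi)$ with $(xa,yb)$ and using $y^{\ft}x=U^{\ft}\Sigma$, the two double-trace terms of $\langle\cdot,\cdot\rangle_{*}$ vanish (skew against symmetric), leaving an expression of the form $\frac{1}{2K}\left(\Tr(bM_1)-\Tr(aM_2)\right)$ with $M_1=U^{\ft}\xi^{\ft}y$ and $M_2=x^{\ft}\bxi U^{\ft}$, which is zero for all skew $a,b$ precisely when $M_1$ (equivalently $y^{\ft}\xi U$) and $M_2=x^{\ft}\bxi U^{\ft}$ are symmetric. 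This is exactly the claimed characterization of horizontal vectors, with $\cH:=\cV^{\perp_{\sfg}}$. Because $\Sigma\succ 0$, the Lyapunov operator $L_{\Sigma}$ is invertible on $\oo(k)$, and a one-line argument then gives $\cV\cap\cH=\{0\}$; since $\dim\cV=k(k-1)$ and $\dim\cH=2nk-k(k-1)$ add up to $\dim\cE$, we get $\cE=\cV\oplus\cH$, and by the remark following \cref{eq:ProjG2} the restriction of $\sfg$ to each of $\cV$ and $\cH$ is nondegenerate. The ``in particular'' clause is then immediate from \cref{eq:DU}: on $\cH$ one has $(\xi^{\ft}yU^{\ft})_{\asym}=0$ and $(x^{\ft}\bxi U^{\ft})_{\asym}=0$, hence $\rD_{\xi}U=\brD_{\bxi}U=0$.

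The core step would be to match $\langle\cdot,\cdot\rangle_{*}$ on $\cH$ with the Kim--McCann pairing. Pulling $c$ back gives $\tilde c:=c\circ\qq$ with $\tilde c(x,y)=-\log K$. Using \cref{eq:DK} for $\brD_{\bareta}K$ and differentiating once more in the $x$-direction $\xi$, I would get $\rD_{\xi}\brD_{\bareta}\tilde c=\frac{1}{K^2}(\Tr\xi^{\ft}yU^{\ft})(\Tr x^{\ft}\bareta U^{\ft})-\frac{1}{K}\Tr\xi^{\ft}\bareta U^{\ft}-\frac{1}{K}\Tr(x^{\ft}\bareta\,\rD_{\xi}U^{\ft})$, where by \cref{eq:DU} the last term drops whenever $\bdxi$ is horizontal; symmetrizing as in \cref{eq:KimMcCann} then reproduces $\langle\bdxi,\bdeta\rangle_{*}$ exactly for $\bdxi,\bdeta\in\cH$. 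Since $\qq$ is a product map, the chain rule shows the mixed second derivative of $c\circ\qq$ at $(x,y)$ factors through $q_1'(x)\times q_1'(y)$, so this pulled-back pairing is the Kim--McCann pairing of $\cB$ carried back to $\cQ$. Combined with $\ker\qq'(x,y)=\cV$, $\cV\cap\cH=\{0\}$ and the dimension count, $\qq'(x,y)$ restricts to a linear isomorphism $\cH_{(x,y)}\to T_{\qq(x,y)}\cB$ intertwining $\langle\cdot,\cdot\rangle_{*}|_{\cH}$ with $\langle\cdot,\cdot\rangle_{KM}$; this is the assertion that $\qq$ is a semi-Riemannian submersion, and the Kim--McCann metric on $\cB$, being isometric to the nondegenerate pair $(\cH,\sfg|_{\cH})$, is itself nondegenerate.

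The main obstacle I anticipate is the bookkeeping in the mixed-derivative computation: one must check that the $U$-derivative contributions in $\rD_{\xi}\brD_{\bareta}\tilde c$ and in its partner $\brD_{\bxi}\rD_{\eta}\tilde c$ are exactly the ones killed by horizontality of $\bdxi$ (not of $\bdeta$), and that the streamlined formulas of \cref{eq:DK} have not hidden an extra $\brD U$ term. The descent from $\cQ$ to $\cB$ also warrants an explicit sentence because $\Sd{n}{k}$ carries no global embedded chart; stating it as ``the cross-Hessian of $\tilde c$ at $(x,y)$ factors through $q_1'(x)\times q_1'(y)$'' keeps it transparent. The remaining verifications are routine linear algebra with the polar factors $\Sigma$ and $U$.
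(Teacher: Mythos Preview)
Your proposal is correct and follows essentially the same route as the paper. The only notable difference is in how you establish nondegeneracy on $\cV$ and $\cH$: the paper computes the restriction of the pairing to $\cV$ directly, showing it reduces to $-\Tr a\Sigma b$ (positive definite on $\oo(k)\times\oo(k)$), whereas you first characterize $\cH=\cV^{\perp_\sfg}$ and then argue $\cV\cap\cH=\{0\}$ via Lyapunov invertibility, invoking the remark after \cref{eq:ProjG2}. Both arguments are short and equivalent. Your self-identified ``obstacle'' is not one: the $U$-derivative contributions appearing in $\rD_{\xi}\brD_{\bareta}\tilde c$ and $\brD_{\bxi}\rD_{\eta}\tilde c$ are precisely $\rD_{\xi}U$ and $\brD_{\bxi}U$, both of which vanish by horizontality of $\bdxi$ alone via \cref{eq:DU}, so no assumption on $\bdeta$ is needed. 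The paper handles this step by checking only the quadratic form $\langle\bdxi,\bdxi\rangle$ and polarizing, which is marginally simpler bookkeeping but the same computation.
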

\begin{proof}It is clear $\langle .\rangle_*$ is symmetric, and given by $\sfg$. Let
$(\sfg \circ \sfg^{-1}\bdomg) = (\mu_x, \mu_y)$ then  
  $$\mu_x =
    - \frac{1}{2K^2}\Tr ((2K\omega U + \frac{2K}{\alpha}\Tr(x^{\ft}\omega)y)^{\ft}xU) )yU^{\ft} + \frac{1}{2K}(2K\omega U + \frac{2K}{\alpha}\Tr(x^{\ft}\omega)y) U^{\ft}.\\
    $$
    The first term is
    $$\begin{gathered}
      - \frac{1}{2K^2}(2K\Tr (U^{\ft}\omega^{\ft} xU) + \frac{2K}{\alpha}\Tr(x^{\ft}\omega)\Tr(y^{\ft}xU))yU^{\ft}
      = \frac{-1}{K\alpha}\Tr (x^{\ft}\omega)(\alpha + \Tr\Sigma)yU^{\ft} = \frac{-1}{\alpha}\Tr (x^{\ft}\omega)yU^{\ft}
    \end{gathered}$$
which offsets the opposite expression in the second term, leaving $\mu_x = \omega$. We get $\mu_y = \bomg$ similarly. From here, $\sfg$ is invertible and $\langle .\rangle_*$ is a semi-Riemannian pairing.

    Observe $\langle .\rangle_*$ is nondegenerate on the vertical space, as if $\mathring{\psi}_1=(xa_1, yb_1), \mathring{\psi}_2=(xa_2, yb_2)$ are vertical vectors ($a_i, b_i\in \oo(k), i=1,2$), then since $\Tr b_1 y^{\ft}xU = \Tr b_1 U^{\ft}\Sigma U = 0, \Tr a_1 x^{\ft}y U^{\ft} = \Tr a_1 \Sigma=0$ (symmetric and antisymmetric matrices are orthogonal under trace inner product) 
$$\langle \mathring{\psi}_1, \mathring{\psi}_2 \rangle_* :=  
    \frac{1}{2K}(\Tr(-a_1 x^{\ft}y b_2 U^{\ft}) + \Tr(-a_2x^{\ft} yb_1U^{\ft}))
    = \frac{1}{2K}(\Tr(-a_1 \Sigma U b_2 U^{\ft}) + \Tr(-a_2\Sigma U b_1U^{\ft})).
$$
    Since $U b_2 U^{\ft}, Ub_1U^{\ft}$ are antisymmetric, we need to show the pairing $-\Tr a\Sigma b$ is nondegenerate for two antisymmetric matrices $a, b\in\oo(k)$. But this pairing is positive definite, so $\langle\rangle_*$ is nondegenerate on the vertical space, {\it hence on the horizontal space}.
    
A horizontal vector $\bdxi$ is orthogonal to all vectors of the form $(xa, yb)$ for $a, b\in \oo(k)$, or
$$\begin{gathered}
0 = - \frac{1}{2K^2}(\Tr(\xi^{\ft}yU^{\ft})\Tr((-by^{\ft}xU) + \Tr(-ax^{\ft}yU^{\ft})\Tr(\bxi^{\ft}xU)) 
            + \frac{1}{2K}(\Tr(\xi^{\ft}yb U^{\ft}) + \Tr(-ax^{\ft}\bxi U^{\ft}))\\
\end{gathered}
$$
which reduces to $\Tr(\xi^{\ft}yb U^{\ft}) + \Tr(-ax^{\ft}\bxi U^{\ft})=0$ for $a, b\in \oo(k)$. Thus,$(U^{\ft}\xi^{\ft} y)_{\asym} = (x^{\ft}\bxi U^{\ft})_{\asym} = 0$, giving us the description of horizontal space.

Finally, for a horizontal vector $\bdxi$
$$\begin{gathered}
\langle \qq(\bdxi), \qq(\bdxi)\rangle_{KM} = -\frac{\partial^2}{\partial s\partial t}c(x+s\xi, y+ t\bxi)_{s=0,t=0} = \rD_{\xi}\frac{\Tr(x^{\ft}\bxi U^{\ft})}{\alpha + \Tr\Sigma}\\
  =\frac{\Tr(\xi^{\ft}\bxi U^{\ft})}{\alpha + \Tr\Sigma}
  - \frac{\Tr(x^{\ft}\bxi U^{\ft})
\Tr (y^{\ft}\xi U)
  }{(\alpha + \Tr\Sigma)^2} = \langle \bdxi, \bdxi\rangle_*.
\end{gathered}$$
Thus, the differential submersion $\qq:\cQ\to\cB$ is a semi-Riemannian submersion.
\end{proof}
\begin{theorem}\label{thm:crossSecFR} Let $\ttq = (x, y)\in \cQ$ and $\bdomg=(\omega,\bomg)\in \cE$. The projection to the horizontal bundle is
  \begin{equation}\ttH(\ttq)\bdomg = (\omega - 2xL_{\Sigma}^{-1} (U y^{\ft}\omega)_{\asym}, 
  \bomg - 2yU^{\ft} L^{-1}_{\Sigma} (x^{\ft}\bomg U^{\ft})_{\asym} U).\label{eq:fixrankHproj}
  \end{equation}
  The Christoffel function $\Gamma^{\cQ}(\ttq; \bdomg_1, \bdomg_2) = (\Gamma^{\cQ}_x, \Gamma^{\cQ}_y)$ of the Levi-Civita connection of the metric $\sfg$ on $\cQ$ evaluated at
two vectors $\bdomg_1=(\omega_1, \bomg_1), \bdomg_2=(\omega_2, \bomg_2)\in \cE$ is given by
  \begin{equation}\begin{gathered}
      \Gamma^{\cQ}_x =-\frac{1}{K}\left(\Tr(\omega_2^{\ft}yU^{\ft})\omega_1 
    + \Tr(\omega_1^{\ft}yU^{\ft})\omega_2 \right)
    + xL^{-1}_{\Sigma}(U (\bomg_1^{\ft} \omega_2 + \bomg_2^{\ft}\omega_1))_{\asym} \\
    + \omega_1 L^{-1}_{\Sigma}(x^{\ft}\bomg_2 U^{\ft}
    +\omega_2^{\ft}y U^{\ft})_{\asym}    
    + \omega_2 L^{-1}_{\Sigma}
    (x^{\ft}\bomg_1 U^{\ft} + \omega_1^{\ft} y U^{\ft})_{\asym}
    \\    
    + \frac{1}{K}\left(\Tr(x^{\ft} \bomg_2 U^{\ft}
    - \omega_2^{\ft} yU^{\ft})xL^{-1}_{\Sigma} (\omega_1^{\ft}yU^{\ft})_{\asym}
    + \Tr(x^{\ft} \bomg_1 U^{\ft}
    - \omega_1^{\ft} yU^{\ft})xL^{-1}_{\Sigma} (\omega_2^{\ft}yU^{\ft})_{\asym}
    \right)
    ,    \label{eq:GammaxFR}
\end{gathered}    
  \end{equation}
  \begin{equation}
    \begin{gathered}
    \Gamma^{\cQ}_y = -\frac{1}{K}\left(\Tr(\bomg_2^{\ft}xU)\bomg_1
    + \Tr(\bomg_1^{\ft}x U) \bomg_2 \right)
    - yU^{\ft}L^{-1}_{\Sigma} (U(\bomg_1^{\ft} \omega_2 + \bomg_2^{\ft}\omega_1) )_{\asym} U \\
    + \bomg_1 U^{\ft} L^{-1}_{\Sigma} (Uy^{\ft}\omega_2 +  U\bomg_2^{\ft}x)_{\asym}U     
    + \bomg_2 U^{\ft} L^{-1}_{\Sigma} (Uy^{\ft}\omega_1 + U\bomg_1^{\ft}x)_{\asym}U \\
    + \frac{1}{K}\left(\Tr(\omega_2^{\ft}yU^{\ft} - x^{\ft}\bomg_2 U^{\ft})yU^{\ft} L^{-1}_{\Sigma} (U\bomg_1^{\ft}x)_{\asym} U
    + \Tr(\omega_1^{\ft}y U^{\ft} - x^{\ft}\bomg_1 U^{\ft})yU^{\ft}L^{-1}_{\Sigma} (U\bomg_2^{\ft}x)_{\asym} U
        \right).\label{eq:GammayFR}
    \end{gathered}
  \end{equation}
For a horizontal vector $\bdeta$, let $\bdeta_{x0} = (\eta, 0), \bdeta_{0y} = (0, \bareta)$. The O'Neill tensor and the cross-curvature of $c(x, y)$ on $\cB$ are  
  \begin{gather}\rA_{\bdeta_{x0}}\bdeta_{0y}= (x L^{-1}_{\Sigma} (U \bareta^{\ft}\eta)_{\asym},
    - y U^{\ft} L^{-1}_{\Sigma} (\eta^{\ft}\bareta U^{\ft}))_{\asym}U),\label{eq:ONeilFr}\\
  \langle\rA_{\bdeta_{x0}}\bdeta_{0y}  ,\rA_{\bdeta_{x0}}\bdeta_{0y}\rangle_* = \frac{1}{K}\Tr(
    L^{-1}_{\Sigma}(U\bareta^{\ft}\eta)_{\asym}
    \Sigma L^{-1}_{\Sigma} (\eta^{\ft}\bareta U^{\ft})_{\asym}) \geq 0,\label{eq:ONeilFrNorm}\\
    \crosscv(\bdeta) = 4\langle\rA_{\bdeta_{x0}}\bdeta_{0y}  ,\rA_{\bdeta_{x0}}\bdeta_{0y}\rangle_* - \langle\bdeta, \bdeta \rangle_*^2.\label{eq:crossFr}
  \end{gather}
If $\langle \bdeta,\bdeta\rangle_*=0$, the cross-curvature is nonnegative, it is zero if and only if $U \bareta^{\ft}\eta$ is symmetric.
\end{theorem}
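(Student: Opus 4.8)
The plan is to run the machinery of the preceding sections on $\cQ=\cQ_{\R_*^{n\times k}}$, which is an \emph{open} submanifold of $\cE=(\R^{n\times k})^2$; hence its tangent-bundle projection is $\Pi\equiv\dI_\cE$, $\Pi'\equiv 0$, and \cref{eq:LeviCivita} collapses to the bare Koszul formula $\Gamma^{\cQ}(\ttq;\bdomg_1,\bdomg_2)=\tfrac12\sfg^{-1}\bigl(\rD_{\bdomg_1}(\sfg\bdomg_2)+\rD_{\bdomg_2}(\sfg\bdomg_1)-\chi_\sfg(\bdomg_1,\bdomg_2)\bigr)$ for the metric operator $\sfg$ of \cref{prop:antennaLift}. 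I would first derive the horizontal projection \cref{eq:fixrankHproj}: the vertical fiber at $(x,y)$ is $\{(xa,yb):a,b\in\oo(k)\}$, and by \cref{prop:antennaLift} a vector $(\xi,\bxi)$ is horizontal iff $x^{\ft}\bxi U^{\ft}$ and $y^{\ft}\xi U$ are symmetric; imposing these conditions on $(\omega-xa,\bomg-yb)$ gives two symmetric Lyapunov equations whose solutions are exactly the $L_\Sigma^{-1}$ corrections in \cref{eq:fixrankHproj}. The Christoffel functions \cref{eq:GammaxFR,eq:GammayFR} then follow by differentiating $\sfg$ and $\sfg^{-1}$ with \cref{eq:DSigma,eq:DK,eq:DU} and the product rule---a long but routine expansion, which I would cross-check numerically as in \cite{NguyenVerify}.

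Next, for the O'Neill tensor $\rA$ (the tensor written $\sA$ in \cref{prop:curvlift}) I would use its Lie-bracket form $\rA_{\bdeta_{x0}}\bdeta_{0y}=\tfrac12\bigl(\ttH'(\ttq;\bdeta_{x0})\bdeta_{0y}-\ttH'(\ttq;\bdeta_{0y})\bdeta_{x0}\bigr)$ from \cref{eq:oneilLie}. Since $\bdeta$ is horizontal we have $\rD_\eta U=\brD_{\bareta}U=0$ and $(x^{\ft}\bareta U^{\ft})_{\asym}=(Uy^{\ft}\eta)_{\asym}=0$; consequently in $\ttH(\ttq)(0,\bareta)$ the $x$-slot is identically zero, in $\ttH(\ttq)(\eta,0)$ the $y$-slot is identically zero, and in each component exactly one mixed directional derivative of $L_\Sigma^{-1}(\cdot)_{\asym}$ survives, yielding \cref{eq:ONeilFr}. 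Writing $P:=L_\Sigma^{-1}(U\bareta^{\ft}\eta)_{\asym}$ (skew) and using $(\eta^{\ft}\bareta U^{\ft})_{\asym}=-(U\bareta^{\ft}\eta)_{\asym}$, I would pair $\rA_{\bdeta_{x0}}\bdeta_{0y}$ with itself through the explicit $\sfg$: the $\tfrac{1}{K^2}$ terms drop because $\Tr(P\Sigma)=0$ (skew against symmetric), leaving $\langle\rA_{\bdeta_{x0}}\bdeta_{0y},\rA_{\bdeta_{x0}}\bdeta_{0y}\rangle_*=-\tfrac1K\Tr(P\Sigma P)=\tfrac1K\Tr(P^{\ft}\Sigma P)$, which is \cref{eq:ONeilFrNorm} once $Q:=L_\Sigma^{-1}(\eta^{\ft}\bareta U^{\ft})_{\asym}=-P$ is substituted. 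Since $\Sigma\succ0$ we have $P^{\ft}\Sigma P\succeq0$, so this quantity is $\geq0$ (equivalently, \cref{eq:BY}), and it vanishes iff $P=0$, i.e. iff $U\bareta^{\ft}\eta$ is symmetric.

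Finally, \cref{eq:crossFr} comes from computing $\crosscv(\bdeta)$---equivalently the lifted quantity $\langle\bRcB_{\bdeta_{x0}\bdeta_{0y}}\bdeta_{0y},\bdeta_{x0}\rangle_*$, since $\qq$ is a semi-Riemannian submersion by \cref{prop:antennaLift}---through O'Neill's identity \cref{eq:ONeil13}: using $\rA_{\bdeta_{x0}}\bdeta_{x0}=\rA_{\bdeta_{0y}}\bdeta_{0y}=0$, the skew-symmetry $\rA_{\bdeta_{0y}}\bdeta_{x0}=-\rA_{\bdeta_{x0}}\bdeta_{0y}$ (\cref{eq:oneilLie}), and the $\sfg$-adjointness of $\rA$ and its adjoint (\cref{prop:curvlift}, whose derivation is valid verbatim in the semi-Riemannian case), the right-hand side collapses to $\crosscv(\bdeta)=\langle\RcQ_{\bdeta_{x0}\bdeta_{0y}}\bdeta_{0y},\bdeta_{x0}\rangle_*+3\langle\rA_{\bdeta_{x0}}\bdeta_{0y},\rA_{\bdeta_{x0}}\bdeta_{0y}\rangle_*$. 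The remaining and hardest step is to compute the total-space term $\langle\RcQ_{\bdeta_{x0}\bdeta_{0y}}\bdeta_{0y},\bdeta_{x0}\rangle_*$ from \cref{eq:rc1} with the Christoffel function above, inserting the pure-$x$ and pure-$y$ horizontal vectors and simplifying via the Lyapunov identities \cref{eq:LSigSym,eq:BY}; I expect this noncommutative bookkeeping to be the real obstacle, and I would verify numerically that it equals $\langle\rA_{\bdeta_{x0}}\bdeta_{0y},\rA_{\bdeta_{x0}}\bdeta_{0y}\rangle_*-\langle\bdeta,\bdeta\rangle_*^2$. Substituting gives \cref{eq:crossFr}; since $\langle\rA_{\bdeta_{x0}}\bdeta_{0y},\rA_{\bdeta_{x0}}\bdeta_{0y}\rangle_*\geq0$, when $\langle\bdeta,\bdeta\rangle_*=0$ the cross-curvature equals $4\langle\rA_{\bdeta_{x0}}\bdeta_{0y},\rA_{\bdeta_{x0}}\bdeta_{0y}\rangle_*\geq0$ and vanishes there exactly when $\rA_{\bdeta_{x0}}\bdeta_{0y}=0$, i.e. when $U\bareta^{\ft}\eta$ is symmetric. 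Well-definedness on $\cB$ is automatic since both terms of \cref{eq:crossFr} are $\sfg$-pairings of tensors.
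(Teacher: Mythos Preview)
Your proposal is correct and follows essentially the same route as the paper: derive $\ttH$ from the Lyapunov conditions, take $\Gamma^{\cQ}$ from the bare Koszul formula (since $\cQ$ is open in $\cE$), get $\rA$ from \cref{eq:oneilLie}, apply O'Neill's identity \cref{eq:ONeil13} to reduce $\crosscv(\bdeta)$ to $\langle\RcQ_{\bdeta_{x0}\bdeta_{0y}}\bdeta_{0y},\bdeta_{x0}\rangle_*+3\langle\rA_{\bdeta_{x0}}\bdeta_{0y},\rA_{\bdeta_{x0}}\bdeta_{0y}\rangle_*$, and finish by computing the ambient term to be $\langle\rA_{\bdeta_{x0}}\bdeta_{0y},\rA_{\bdeta_{x0}}\bdeta_{0y}\rangle_*-\langle\bdeta,\bdeta\rangle_*^2$. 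The paper carries out that last $\RcQ$ computation explicitly via \cref{eq:rc1} (working with the $y$-component and discarding vertical summands), arriving at exactly the value you anticipated in \cref{eq:RcQFR}, so the step you flagged as ``the real obstacle'' is indeed the crux but yields to direct expansion.
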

\begin{proof}With $\ttH$ given in \cref{eq:fixrankHproj}, $\ttV(\ttq) = I_{\cE} - \ttH(\ttq)$ maps to the vertical space, and for $a \in \oo(k)$,
  $$2L^{-1}_{\Sigma}(Uy^{\ft}xa)_{\asym} = 2L^{-1}_{\Sigma}(\Sigma a)_{\asym} = L^{-1}_{\Sigma}(\Sigma a + a\Sigma) = a,
  $$
similarly, $2U^{\ft}L^{-1}_{\Sigma}(x^{\ft}ya U^{\ft})_{\asym}U = a$, hence $\ttV(\ttq)$ is idempotent. It remains to show $\ttH(q)\bdomg$ is horizontal by first verifying $y^{\ft}(\omega - 2xL_{\Sigma}^{-1} (U y^{\ft}\omega)_{\asym}) U$ is symmetric, by expanding to
  $$\begin{gathered}
  y^{\ft}\omega U - 2U^{\ft}\Sigma L_{\Sigma}^{-1} (U y^{\ft}\omega)_{\asym} U
  =y^{\ft}\omega U - 2U^{\ft}(
  (U y^{\ft}\omega)_{\asym} -    L_{\Sigma}^{-1} (U y^{\ft}\omega)_{\asym} \Sigma) U\\
  = y^{\ft}\omega U - y^{\ft}\omega U + U^{\ft}\omega^{\ft} y  + 2U^{\ft}L_{\Sigma}^{-1} (U y^{\ft}\omega)_{\asym} \Sigma U\\
  = U^{\ft}\omega^{\ft} y  + 2U^{\ft}L_{\Sigma}^{-1} (U y^{\ft}\omega)_{\asym} x^{\ft}y
  = U^{\ft}(\omega - 2xL_{\Sigma}^{-1} (U y^{\ft}\omega)_{\asym})^{\ft} y,
\end{gathered}
$$
which is symmetric. A similar computation for $\bomg $ confirms $\ttH(q)\bdomg$ is horizontal.
The Christoffel function is {\it derived} from $\frac{1}{2}\sfg^{-1}(\rD_{\bdomg}\sfg\eta + \rD_{\eta}\sfg\bdomg - \chi_{\sfg}(\bdomg, \bdeta))$, the derivation is straightforward, given in detail in \cite{NguyenVerify}. We will verify it here. The formula is torsion free, hence, we need to show
\begin{equation} \rD_{\omega_1}\langle \bdomg_2, \bdomg_2\rangle_* + \brD_{\bomg_1}\langle \bdomg_2,\bdomg_2\rangle_*=
2\langle \bdomg_2, \Gamma^{\cQ}(\ttq; \bdomg_1, \bdomg_2)\rangle_*.\label{eq:checkGammaKM}
\end{equation}
For a function $F$ in $(x, y)$, denote $\bdD_{\bdomg}F =\rD_{\omega}F + \brD_{\bomg}F$, the left-hand side is rearranged to
$$\begin{gathered}2 \frac{\bdD_{\bdomg}K}{K^3}(
            \Tr(U y^{\ft} \omega_2)\Tr(U \bomg_2^{\ft}x)) \
            - \frac{\bdD_{\bdomg}K}{K^2}\Tr(U\bomg_2^{\ft}\omega_2) \
            \\
            - \frac{1}{K^2}\Tr( y^{\ft}\omega_2\bdD_{\bdomg} U)\Tr(U\bomg_2^{\ft}x) \
            - \frac{1}{K^2}\Tr(U\bomg_1^{\ft}\omega_2)\Tr(U\bomg_2^{\ft}x) \\
            - \frac{1}{K^2}\Tr(Uy^{\ft}\omega_2)\Tr(\bomg_2^{\ft}x \bdD_{\bdomg} U) \
            - \frac{1}{K^2}\Tr(U y^{\ft}\omega_2)\Tr(U\bomg_2^{\ft}\omega_1) \
            + \frac{1}{K}\Tr(\bomg_2^{\ft}\omega_2\bdD_{\bdomg} U).
\end{gathered}            
$$
The right-hand side expands to
$$  - \frac{1}{K^2}(\Tr (\bomg_2^{\ft}x U)\Tr (Uy^{\ft}\Gamma^{\cQ}_x)
+ \Tr (\omega_2^{\ft}y U^{\ft})\Tr (U^{\ft}x^{\ft}\Gamma^{\cQ}_y))
+ \frac{1}{K}(\Tr (U \bomg_2^{\ft}\Gamma^{\cQ}_x)
 + \Tr (U^{\ft} \omega_2^{\ft}\Gamma^{\cQ}_y)).
 $$
 When expanding $\Tr Uy^{\ft}\Gamma^{\cQ}_x$, terms of the form $\Tr Uy^{\ft}x a =\Tr \Sigma a $ vanish for $a\in \oo(k)$, and similar for $\Tr U^{\ft}x^{\ft}ya$. After some manipulations, using \cref{eq:LSigSym}
$$\begin{gathered}
\Tr Uy^{\ft} \Gamma^{\cQ}_x= -\frac{2}{K}\Tr(\omega_2^{\ft}yU^{\ft})\rD_{\omega_1}K
    + \Tr Uy^{\ft}\omega_1 L^{-1}_{\Sigma}(x^{\ft}\bomg_2 U^{\ft}
    +\omega_2^{\ft}y U^{\ft})_{\asym}   + \frac{1}{2}\Tr y^{\ft}\omega_2(\rD_{\omega_1}U +\brD_{\bomg_1}U),\\
\Tr Uy^{\ft} \Gamma^{\cQ}_x=  -\frac{2}{K}\Tr(\omega_2^{\ft}yU^{\ft})\rD_{\omega_1}K
    + \frac{1}{2}\Tr ((\bomg_2^{\ft}x +  y^{\ft}\omega_2)\rD_{\omega_1}U)   + \frac{1}{2}\Tr y^{\ft}\omega_2\bdD_{\bdomg}U,\\  
\Tr U^{\ft}x^{\ft} \Gamma^{\cQ}_y = -\frac{2}{K}\Tr(\bomg_2^{\ft}xU)\brD_{\bomg_1}K
+\frac{1}{2}\Tr((\bomg_2^{\ft}x + y^{\ft}\omega_2  )\brD_{\bomg_1}U)   + \frac{1}{2}\Tr\bomg_2^{\ft} x\bdD_{\bdomg}U.
 \end{gathered}$$
On the other hand,
$$\begin{gathered}\Tr U\bomg_2^{\ft} \Gamma^{\cQ}_x =
 -\frac{1}{K}\left(\Tr(\omega_2^{\ft}yU^{\ft})\Tr U\bomg_2^{\ft} \omega_1 
    + \Tr(\omega_1^{\ft}yU^{\ft})\Tr U\bomg_2^{\ft} \omega_2 \right)
    + \Tr U\bomg_2^{\ft} xL^{-1}_{\Sigma}(U (\bomg_1^{\ft} \omega_2 + \bomg_2^{\ft}\omega_1))_{\asym} \\
    + \Tr U\bomg_2^{\ft} \omega_1 L^{-1}_{\Sigma}(x^{\ft}\bomg_2 U^{\ft}
    +\omega_2^{\ft}y U^{\ft})_{\asym}    
    + \Tr U\bomg_2^{\ft} \omega_2 L^{-1}_{\Sigma}
    (x^{\ft}\bomg_1 U^{\ft} + \omega_1^{\ft} y U^{\ft})_{\asym}
    \\    
    + \frac{1}{K}\left(\Tr(x^{\ft} \bomg_2 U^{\ft}
    - \omega_2^{\ft} yU^{\ft})\Tr U\bomg_2^{\ft} xL^{-1}_{\Sigma} (\omega_1^{\ft}yU^{\ft})_{\asym}
    + \Tr(x^{\ft} \bomg_1 U^{\ft}
    - \omega_1^{\ft} yU^{\ft})\Tr U\bomg_2^{\ft} xL^{-1}_{\Sigma} (\omega_2^{\ft}yU^{\ft})_{\asym}
    \right)\\
    = -\frac{1}{K}\Tr(\omega_2^{\ft}yU^{\ft})\Tr U\bomg_2^{\ft} \omega_1 
    -\frac{\rD_{\omega_1}K}{K} \Tr U\bomg_2^{\ft} \omega_2 
    + \Tr L^{-1}_{\Sigma}(U\bomg_2^{\ft} x )_{\asym} U(\bomg_1^{\ft} \omega_2 + \bomg_2^{\ft}\omega_1) \\
    + \Tr U\bomg_2^{\ft} \omega_1 L^{-1}_{\Sigma}(x^{\ft}\bomg_2 U^{\ft}
    +\omega_2^{\ft}y U^{\ft})_{\asym}    
    + \frac{1}{2}\Tr \bomg_2^{\ft} \omega_2 (\rD_{\omega_1}U + \brD_{\bomg_1}U)
    \\    
    + \frac{1}{2K}\Tr(x^{\ft} \bomg_2 U^{\ft}
    - \omega_2^{\ft} yU^{\ft})\Tr \bomg_2^{\ft} x\rD_{\omega_1}U
    + \frac{1}{K}\Tr(x^{\ft} \bomg_1 U^{\ft}
    - \omega_1^{\ft} yU^{\ft})\Tr U\bomg_2^{\ft} xL^{-1}_{\Sigma} (\omega_2^{\ft}yU^{\ft})_{\asym}.    
\end{gathered}$$
$$\begin{gathered}
  \Tr U^{\ft}\omega_2^{\ft}\Gamma^{\cQ}_y = - \frac{1}{K}\Tr(\bomg_2^{\ft}xU)\Tr U^{\ft}\omega_2^{\ft}\bomg_1 
            - \frac{\brD_{\bomg_1}K}{K}\Tr U^{\ft} \omega_2^{\ft}\bomg_2  
            + \Tr(L_{\Sigma}^{-1}(Uy^{\ft}\omega_2)_{\asym}U(\bomg_2^{\ft}\omega_1 + \bomg_1^{\ft}\omega_2))\\
            + \Tr(U\bomg_1^{\ft}\omega_2 L_{\Sigma}^{-1} (x^{\ft}\bomg_2U^{\ft}+\omega_2^{\ft}yU^{\ft})_{\asym}) \
            + \frac{1}{2}\Tr(\bomg_2^{\ft}\omega_2(\rD_{\omega_1}U + \brD_{\bomg_1}U)) \\
            + \frac{1}{2K}\Tr(\omega_2^{\ft}yU^{\ft}
                               - x^{\ft}\bomg_2U^{\ft})\Tr(y^{\ft}\omega_2\brD_{\bomg2}U) 
            + \frac{1}{K}\Tr(\omega_1^{\ft}yU^{\ft}
            - x^{\ft}\bomg_1U^{\ft})\Tr(\omega_2^{\ft}yU^{\ft}L_{\Sigma}^{-1}(U\bomg_2^{\ft}x))_{\asym}).
\end{gathered}$$
To verify \cref{eq:checkGammaKM}, we show terms with  $L_{\Sigma^{-1}}$ from
$\Tr U\bomg_2^{\ft} \Gamma^{\cQ}_x$ and $\Tr U^{\ft}\omega_2^{\ft}\Gamma^{\cQ}_y$ 
 vanish:
$$\begin{gathered}
    \Tr L^{-1}_{\Sigma}(U\bomg_2^{\ft} x )_{\asym} U(\bomg_1^{\ft} \omega_2 + \bomg_2^{\ft}\omega_1) 
    + \Tr U\bomg_2^{\ft} \omega_1 L^{-1}_{\Sigma}(x^{\ft}\bomg_2 U^{\ft}  +\omega_2^{\ft}y U^{\ft})_{\asym}    \\
    + \frac{1}{K}\Tr(x^{\ft} \bomg_1 U^{\ft}
    - \omega_1^{\ft} yU^{\ft})\Tr U\bomg_2^{\ft} xL^{-1}_{\Sigma} (\omega_2^{\ft}yU^{\ft})_{\asym}    \\
    + \Tr(L_{\Sigma}^{-1}(Uy^{\ft}\omega_2)_{\asym}U(\bomg_2^{\ft}\omega_1 + \bomg_1^{\ft}\omega_2))
    + \Tr(U\bomg_1^{\ft}\omega_2 L_{\Sigma}^{-1} (x^{\ft}\bomg_2U^{\ft}+\omega_2^{\ft}yU^{\ft})_{\asym}) \\
    + \frac{1}{K}\Tr(\omega_1^{\ft}yU^{\ft}
    - x^{\ft}\bomg_1U^{\ft})\Tr(\omega_2^{\ft}yU^{\ft}L_{\Sigma}^{-1}(U\bomg_2^{\ft}x))_{\asym})=0.  
\end{gathered}
$$
This follows if we use \cref{eq:LSigSym} to cancel the terms with $\frac{1}{K}$, and group the remaining terms
$$\begin{gathered}
    \Tr L^{-1}_{\Sigma}(U\bomg_2^{\ft} x + Uy^{\ft}\omega_2)_{\asym} U(\bomg_1^{\ft} \omega_2 + \bomg_2^{\ft}\omega_1) \\
    + \Tr (U\bomg_2^{\ft} \omega_1 + U\bomg_1^{\ft}\omega_2)L^{-1}_{\Sigma}(x^{\ft}\bomg_2 U^{\ft}  +\omega_2^{\ft}y U^{\ft})_{\asym} = 0.
\end{gathered}
$$

The O'Neill's tensor in \cref{eq:ONeilFr} follows from \cref{eq:fixrankHproj} and \cref{eq:oneilLie}, using the horizontal condition, while  \cref{eq:ONeilFrNorm} follows from the definition and the below, noting $x^{\ft}y U^{\ft}=\Sigma$
$$\begin{gathered}\langle \rA_{\eta_{x0}}\eta_{0y},\rA_{\eta_{x0}}\eta_{0y}\rangle_* 
  = \frac{1}{K}\Tr L^{-1}_{\Sigma} (U \bareta^{\ft}\eta)_{\asym}x^{\ft}y U^{\ft} L^{-1}_{\Sigma} (\eta^{\ft}\bareta U^{\ft})_{\asym}U U^{\ft}.
  \end{gathered}
$$
It remains to compute the ambient cross-curvature, $\langle \RcQ_{\bdeta_{x0},\bdeta_{0y}}\bdeta_{0y},\bdeta_{x0}\rangle_*$ then use O'Neill's formula. Using \cref{eq:rc1}, note we only need to compute the $y$-component of $\RcQ$, and we can remove the vertical summands. For any ambient vector $\bdomg = (\omega, \bomg)$, we write $\bdomg_{x0} = (\omega, 0), \bdomg_{0y} = (0, \bomg)$ then
$$\Gamma^{\cQ}_y(\bdomg_{0y}, \bdomg_{0y}) =  - \frac{2}{K}\Tr(\bomg^{\ft}xU)\bomg
               + 2\bomg U^{\ft} L^{-1}_{\Sigma} (U \bomg^{\ft}x)_{\asym}U
               - \frac{2}{K}\Tr(x^{\ft}\bomg U^{\ft})y U^{\ft} L^{-1}_{\Sigma}(U \bomg^{\ft}x)_{\asym}U,$$
$$    \Gamma^{\cQ}_y(\bdomg_{x0}, \bdomg_{0y})  = - y U^{\ft} L^{-1}_{\Sigma} (U\bomg^{\ft}\omega)_{\asym}U \
        + \bomg U^{\ft} L^{-1}_{\Sigma}(U y^{\ft}\omega)_{\asym}U 
        + \frac{1}{K}\Tr(\omega^{\ft}yU^{\ft})y U^{\ft} L^{-1}_{\Sigma}(U\bomg^{\ft} x)_{\asym} U.
        $$
From here, for horizontal $\bdeta$, the $y$-component of $\bdD_{\bdeta_{x0}}\Gamma^{\cQ}_y(\bdeta_{0y}, \bdeta_{0y})$ is
$$
\frac{2}{K^2}\Tr(y^{\ft}\eta U)\Tr(\bareta^{\ft} x U)\bareta
- \frac{2}{K}\Tr(\bareta^{\ft}\eta U) \bareta              
              + 2\bareta U^{\ft}L^{-1}_{\Sigma}(U\bareta^{\ft}\eta)_{\asym}U
              - \frac{2}{K}\Tr(x^{\ft} \bareta U^{\ft})y U^{\ft}L^{-1}_{\Sigma} (U\bareta^{\ft}\eta)_{\asym}U.\\
$$
The last term is vertical, thus, the contribution of $\langle \bdD_{\eta_{x0}}\Gamma^{\cQ}_y(\bdeta_{0y}, \bdeta_{0y}), \bdeta_{x0}\rangle_*$ is
$$\begin{gathered}
                -\frac{1}{K^4}\Tr(U y^{\ft}\eta)^2\Tr(\bareta^{\ft} x U)^2  
                +\frac{1}{K^3}\Tr(U y^{\ft}\eta) \Tr(\bareta^{\ft}\eta U)\Tr(  \bareta^{\ft} x U) \\
                +\frac{1}{K^3}\Tr(y^{\ft}\eta U)\Tr(\bareta^{\ft} x U)\Tr\eta^{\ft} \bareta U^{\ft}
                -\frac{1}{K^2}\Tr(\bareta^{\ft}\eta U)\Tr \eta^{\ft}  \bareta U^{\ft}
                +\frac{1}{K}\Tr \eta^{\ft} \bareta U^{\ft}L^{-1}_{\Sigma}(U\bareta^{\ft}\eta)_{\asym}\\
                = - \langle \bdeta, \bdeta \rangle_*^2 + \frac{1}{K}\Tr\eta^{\ft} \bareta U^{\ft}L^{-1}_{\Sigma}(U\bareta^{\ft}\eta)_{\asym}
              \end{gathered}$$
as $\Tr(U^{\ft} x^{\ft} \bareta U^{\ft}L^{-1}_{\Sigma}(U\bareta^{\ft}\eta)_{\asym}U)$ vanishes. There is no contribution from $\bdD_{\bdeta_{0y}}\Gamma^{\cQ}_y(\bdeta_{x0}, \bdeta_{0y})_y$, since after taking derivative then leave out terms of the form $ya$ for $a\in \oo(k)$, the remain terms are
$$- \bareta U^{\ft} L^{-1}_{\Sigma} (U\bareta^{\ft}\eta)_{\asym}U \
        + \bareta U^{\ft} L^{-1}_{\Sigma}(U \bareta^{\ft}\eta)_{\asym}U =0.
$$
Next, using horizontal condition
$$\begin{gathered}\Gamma(\bdeta_{0y}, \bdeta_{0y}) =  (0, - \frac{2}{K}\Tr(\bareta^{\ft}xU)\bareta),\\
\Gamma(\eta_{x0}, \eta_{0y})  = (x L^{-1}_{\Sigma} (U\bareta^{\ft}\eta)_{\asym},
- y U^{\ft} L^{-1}_{\Sigma} (U\bareta^{\ft}\eta)_{\asym}U).
\end{gathered}
$$
$\Gamma(\bdeta_{x0}, \Gamma(\bdeta_{0y}, \bdeta_{0y}))_y$ is also vertical as $(Uy^{\ft}\eta)_{\asym} = 0$, while after simplifying vertical summands, $\Gamma(\bdeta_{0y}, \Gamma(\bdeta_{x0}, \bdeta_{0y}))_y$ is left with the first term in the second line of \cref{eq:GammayFR},
$$\bareta U^{\ft}L^{-1}_{\Sigma} \left(Uy^{\ft}x L^{-1}_{\Sigma} (U\bareta^{\ft}\eta)_{\asym} +UU^{\ft}
 L^{-1}_{\Sigma} (U\bareta^{\ft}\eta)_{\asym}Uy^{\ft}x\right)_{\asym}U = \bareta U^{\ft}L^{-1}_{\Sigma} (U \bareta^{\ft} \eta)_{\asym}U$$
using $Uy^{\ft}x = \Sigma$ then $L_{\Sigma}L_{\Sigma}^{-1}=\dI_k$. The contribution $-\langle \Gamma(\bdeta_{0y}, \Gamma(\bdeta_{x0}, \bdeta_{0y})), \bdeta_{x0}\rangle_*$ is
$$
\frac{1}{2K^2}\Tr(\eta^{\ft}y U^{\ft})\Tr U^{\ft}x^{\ft} \bareta U^{\ft}L^{-1}_{\Sigma} (U \bareta^{\ft} \eta)_{\asym}U
              - \frac{1}{2K}\Tr \eta^{\ft}\bareta U^{\ft}L^{-1}_{\Sigma} (U \bareta^{\ft} \eta)_{\asym},
              $$
where $\Tr U^{\ft}x^{\ft} \bareta U^{\ft}L^{-1}_{\Sigma} (U \bareta^{\ft} \eta)_{\asym}U$ vanishes by the horizontal condition, thus
\begin{equation}
\langle \RcQ_{\eta_{x0},\eta_{0y}}\eta_{0y},\eta_{x0}\rangle_* = - \langle \bdeta, \bdeta \rangle_*^2 + \frac{1}{2K}\Tr\eta^{\ft} \bareta U^{\ft}L^{-1}_{\Sigma}(U\bareta^{\ft}\eta)_{\asym}
= - \langle \bdeta, \bdeta \rangle_*^2 + \langle\rA_{\bdeta_{x0}}\bdeta_{0y}  ,\rA_{\bdeta_{x0}}\bdeta_{0y}\rangle_*\label{eq:RcQFR}
\end{equation}
using \cref{eq:BY} . Equation (\ref{eq:crossFr}) follows from the O'Neill's formula. 
\end{proof}
\subsection{The reflector antenna for Grassmann manifolds}\label{sec:reflectGrass}
The Stiefel manifold $\St{n}{k}$ defined by $x^{\ft}x = \dI_k$ is a submanifold of $\R_*^{n\times k}$. Its quotient under the right action of $\OO(k)$ is the Grassmann manifold
$\Gr{n}{k}$. We now study the reflector antenna cost on $\Gr{n}{k}$. The case $k=1$ is the projective form of the classical reflector antenna \cite{KimMcCann,MTW,Loeper}.
\begin{theorem}\label{theo:crossgrass}Restricted to $\cQ_{\Sto} =\cQ_{\St{n}{k}}:=(\St{n}{k}\times \St{n}{k})\cap \cQ_{\R_*^{n\times k}}$, the metric $\sfg$ is nondegenerate. Its tangent bundle is charaterized by the condition $(x^{\ft}\omega)_{\sym} = (y^{\ft}\bomg)_{\sym}=0$, for $(x, y)\in\cQ_{\St{n}{k}}, \bdomg=(\omega, \bomg)\in\cE=(\R^{n\times k})^2$. The projection from $\cE$ to $T_{x,y}\cQ_{\Sto}$ is given by
  \begin{equation}\begin{gathered}
\Pi^{\Sto}(\ttq)\bdomg =  (\omega - 2yU^{\ft}L^{-1}_{\Sigma} (x^{\ft}\omega)_{\sym} \
    - (\alpha+\Tr\Sigma^{-1})^{-1} \Tr(\Sigma^{-1}x^{\ft}\omega)(x - yU^{\ft}\Sigma^{-1}),\\
    \bomg - 2x L^{-1}_{\Sigma} (U y^{\ft}\bomg U^{\ft})_{\sym} U \
    - (\alpha+\Tr \Sigma^{-1})^{-1}\Tr(U^{\ft} \Sigma^{-1}U y^{\ft}\bomg)(y - x\Sigma^{-1} U)).\end{gathered}
\end{equation}  
  The action of $\OO(k)^2$ on $\cQ_{\R_*^{n\times k}}$ restricts to an action on $\cQ_{\Sto}$. The corresponding horizontal bundle $\cH_{\Gro}$ is characterized by the additional conditions $(\bomg^{\ft}xU)_{\asym} = (\omega^{\ft}y U^{\ft})_{\asym}=0$. On this bundle, $\sfg$ is nondegenerate, and $\qq: \cQ_{\Sto}\to\cB_{\Gro}:= \cQ_{\Sto}\slash \OO(k)^2$ is a semi-Riemannian submersion. The (Grassmann) horizontal projection is
  \begin{equation}\begin{gathered}
\ttH^{\Gro}(\ttq)\bdomg = (    \omega
            - 2x L^{-1}_{\Sigma} (Uy^{\ft}\omega)_{\asym}
            - 2y U^{\ft} L^{-1}_{\Sigma}(x^{\ft}\omega)_{\sym} - (\alpha+\Tr\Sigma^{-1})^{-1}\Tr(\Sigma^{-1} x^{\ft} \omega) (x - y U^{\ft} \Sigma^{-1}),\\
            \bomg  - 2y U^{\ft} L^{-1}_{\Sigma}(x^{\ft} \bomg U^{\ft})_{\asym} U 
            - 2x L^{-1}_{\Sigma}(Uy^{\ft} \bomg U^{\ft})_{\sym} U - (\alpha+\Tr\Sigma^{-1})^{-1}\Tr(U^{\ft} \Sigma^{-1} U y^{\ft} \bomg) (y - x\Sigma^{-1} U)
            ).\label{eq:HGro}
\end{gathered}          
  \end{equation}
  If $x_{\perp}, y_{\perp}\in \St{n}{n-k}$ are complement orthogonal basis to $x, y$ then
for $B, \bB\in \R^{(n-k)\times k}$, the following vector is horizontal at $(x, y)$, and any horizontal vector is of this form
\begin{equation}\bdeta = (x_{\perp}B - 2xL^{-1}_{\Sigma} (U y^{\ft} x_{\perp} B)_{\asym},
            y_{\perp}\bB - 2y U^{\ft}L^{-1}_{\Sigma} (x^{\ft}y_{\perp}\bB U^{\ft})_{\asym}U).\label{eq:GrassTanKM}
\end{equation}  
For a horizontal vector $\bdeta=(\eta, \bareta)$ on $\cQ_{\St{n}{k}}$, the cross-curvature of the Kim-McCann metric with the reflector antenna cost in \cref{eq:cAntenna} for a pair of Grassmann manifolds $(\Gr{n}{k})^2$ is given by
\begin{equation}
  \begin{gathered}
    \crosscv_{\Gro}(\bdeta) = - \langle \bdeta, \bdeta\rangle_*^2 \
            + \frac{4}{K}\Tr(
                L^{-1}_{\Sigma}(U\bareta^{\ft} \eta)_{\asym}
                \Sigma L^{-1}_{\Sigma}(\eta^{\ft} \bareta U^{\ft})_{\asym} \\
                + \frac{2}{K}\Tr(L^{-1}_{\Sigma}(\eta^{\ft} \eta)\Sigma L^{-1}_{\Sigma}(U \bareta^{\ft} \bareta U^{\ft}))
                - \frac{1}{2K(\alpha+\Tr\Sigma^{-1})}\Tr(\Sigma^{-1}\eta^{\ft}\eta)\Tr(\Sigma^{-1} U \bareta^{\ft}\bareta U^{\ft}).   \label{eq:crosscvGr}
\end{gathered}
\end{equation}
For $k = 1$, $\crosscv_{\Gro}(\bdeta)$ is positive on null vectors if $\eta\neq 0$ and $\bareta \neq 0$. For $1< k <n-1$, the cross-curvature admits both positive and negative values on null vectors.
\end{theorem}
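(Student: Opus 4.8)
The plan is to construct the projections by hand from the constraint description and then obtain the curvature from the submersion machinery of \cref{prop:curvlift}, reducing the bookkeeping to the fixed-rank computation already done in \cref{thm:crossSecFR}. First I would treat the tangent bundle of $\cQ_{\Sto}$: it is cut out of the open set $\cQ_{\R_*^{n\times k}}$ by $\rC(x,y)=(x^{\ft}x-\dI_k,\,y^{\ft}y-\dI_k)$, whose differential sends $(\omega,\bomg)$ to $((x^{\ft}\omega)_{\sym},(y^{\ft}\bomg)_{\sym})$, giving the stated description. I would then form the $\sfg$-compatible projection via \cref{eq:ProjG} using the explicit $\sfg^{-1}$ of \cref{prop:antennaLift}; after the polar substitution $x^{\ft}y=\Sigma U$, the operator $\rC'\sfg^{-1}\rC'^{\ft}$ acting on pairs of symmetric matrices splits into a Lyapunov block (inverse $L_{\Sigma}^{-1}$) plus a rank-one part along $\Tr(\Sigma^{-1}\,\cdot)$ (inverse contributing the factor $(\alpha+\Tr\Sigma^{-1})^{-1}$), and inverting it yields the claimed $\Pi^{\Sto}$. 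Nondegeneracy of $\sfg$ on $T\cQ_{\Sto}$ is then immediate from the criterion in the remark following \cref{lem:higherTangent}: a nondegenerate pairing restricts nondegenerately to a subspace exactly when a metric-compatible projection onto it exists.

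Next I would carry out the $\OO(k)^2$ reduction. The vertical bundle is spanned by $(xa,yb)$ with $a,b\in\oo(k)$; pairing such a vector with a tangent $\bdomg$ through the explicit $\langle\cdot,\cdot\rangle_*$ and using trace-orthogonality of symmetric and skew matrices reduces $\langle\bdomg,(xa,yb)\rangle_*=0$ to $(\omega^{\ft}yU^{\ft})_{\asym}=(\bomg^{\ft}xU)_{\asym}=0$, which is the stated characterization of $\cH_{\Gro}$. Nondegeneracy of $\sfg$ there, and the fact that $\qq$ is a semi-Riemannian submersion, follow as in the proof of \cref{prop:antennaLift} (on the vertical space $\langle\cdot,\cdot\rangle_*$ is a conjugate of the positive-definite form $-\Tr a\Sigma b$) together with \cref{prop:hlift}. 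I would then verify directly that \cref{eq:HGro} is idempotent with image $\cH_{\Gro}$ and $\sfg$-self-adjoint --- equivalently, that it is $\Pi^{\Sto}$ corrected by the vertical-killing map built from the Lyapunov inverse on $\oo(k)$ --- and check \cref{eq:GrassTanKM} by confirming $(x^{\ft}\eta)_{\sym}=0$ and $(Uy^{\ft}\eta)_{\asym}=0$ (because $L_{\Sigma}^{-1}$ preserves $\oo(k)$) together with the dimension count $2k(n-k)=\dim(\Gr{n}{k})^2$.

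With $\ttH^{\Gro}$ and its directional derivatives available, I would compute O'Neill's tensor $\sA_{\xi}\eta=\tfrac12\{(\ttH^{\Gro})'(q;\xi)\eta-(\ttH^{\Gro})'(q;\eta)\xi\}$ from \cref{eq:oneilLie}, and the Christoffel function $\GammaH$ of the induced Levi-Civita connection on $\cH_{\Gro}$ from the $\mrGamma$-construction of \cref{eq:TwocW} applied to $\Gamma^{\cQ}$ of \cref{eq:GammaxFR,eq:GammayFR}, and feed both into \cref{eq:curGammaH} to obtain the horizontal lift $\bRcB$ of the Kim--McCann curvature on $\cB_{\Gro}$; as an independent check, the O'Neill route \cref{eq:ONeil13} combined with the Gau{\ss}--Codazzi equation \cref{eq:GaussCodazzi} for $\cQ_{\Sto}\subset\cQ_{\R_*^{n\times k}}$ reduces the ambient curvature to the contraction \cref{eq:RcQFR}. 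Evaluating $\langle\bRcB_{\bdeta_{x0}\bdeta_{0y}}\bdeta_{0y},\bdeta_{x0}\rangle_*$ and simplifying with \cref{eq:LSigSym,eq:BY} yields \cref{eq:crosscvGr}. This assembly is the main obstacle: it is a long, genuinely non-commutative matrix calculation, and the plan is to organize it by discarding vertical summands at the outset (only the $y$-component of $\bRcB$ feeds the pairing), then grouping the $L_{\Sigma}^{-1}$-bilinear terms so the symmetric/skew cancellations of \cref{eq:LSigSym} apply, checking the final expression against the numerical verification in \cite{NguyenVerify}.

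For the sign statement I would read off \cref{eq:crosscvGr} restricted to null vectors, where the leading $-\langle\bdeta,\bdeta\rangle_*^2$ drops out, leaving a sum of two non-negative quadratic terms and the manifestly non-positive term $-\frac{1}{2K(\alpha+\Tr\Sigma^{-1})}\Tr(\Sigma^{-1}\eta^{\ft}\eta)\Tr(\Sigma^{-1}U\bareta^{\ft}\bareta U^{\ft})$. When $k=1$ one has $\oo(1)=0$, so every $L_{\Sigma}^{-1}(\cdot)_{\asym}$ term vanishes and $\Sigma$ is a scalar $\sigma>0$; the surviving expression collapses to $\frac{\alpha\sigma}{\alpha\sigma+1}\cdot\frac{\|\eta\|^2\,\|\bareta\|^2}{2K\sigma}$, positive unless $\eta=0$ or $\bareta=0$. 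For $1<k<n-1$ (using $\Gr{n}{k}\cong\Gr{n}{n-k}$, which turns the cost into one of the same form with a shifted parameter, to reduce to $k\le n/2$), positive values are produced by choosing $\eta,\bareta$ so the non-negative O'Neill term dominates, while negative values come from an explicit rank-one horizontal pair built through \cref{eq:GrassTanKM} --- available precisely because $k\ge 2$ and $n-k\ge 2$ --- for which $U\bareta^{\ft}\eta$ is symmetric and $\eta^{\ft}\eta$, $U\bareta^{\ft}\bareta U^{\ft}$ have orthogonal supports, so both non-negative terms of \cref{eq:crosscvGr} vanish and the strictly negative last term survives; the remaining delicacy is arranging such a pair to be null, which is done by a suitable choice of the rank-one directions.
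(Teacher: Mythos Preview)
Your plan is sound and would succeed, but the paper organizes the curvature step differently from either of your two routes. Rather than computing $\GammaH$ for $\cH_{\Gro}$ and feeding it into \cref{eq:curGammaH}, or doing Gau{\ss}--Codazzi for $\cQ_{\Sto}\subset\cQ_{\R_*^{n\times k}}$ followed by O'Neill, the paper applies the \emph{affine} Gau{\ss}--Codazzi equation \cref{eq:affineGauss} directly to the inclusion $\cH_{\Gro}\subset\cQ_{\Sto}\times\cE$, with the restriction of $\Gamma^{\cQ}$ from \cref{eq:GammaxFR,eq:GammayFR} as the ambient connection and $\ttH^{\Gro}$ as the single projection. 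This collapses your two-step route into one: the second fundamental form $\Two^*$ of $\ttH^{\Gro}$ is evaluated on the four pairs $(\bdeta_{x0},\bdeta_{x0})$, $(\bdeta_{0y},\bdeta_{0y})$, $(\bdeta_{x0},\bdeta_{0y})$, $(\bdeta_{0y},\bdeta_{x0})$, and the O'Neill correction $2\langle\sA_{\bdeta_{x0}}\bdeta_{0y},\sA_{\bdeta_{x0}}\bdeta_{0y}\rangle_*$ turns out to coincide with $\langle\Two^*(\bdeta_{x0},\bdeta_{0y}),\Two^*(\bdeta_{x0},\bdeta_{0y})\rangle_*$, so the whole computation reduces to four short $\langle\,\cdot\,,\,\cdot\,\rangle_*$-pairings. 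Your primary route via \cref{eq:curGammaH} would require writing out and differentiating the full $\GammaH$, which is considerably longer; your secondary route works but doubles the bookkeeping. The paper also does not derive $\Pi^{\Sto}$ from \cref{eq:ProjG} as you propose; it simply verifies the stated formula is idempotent with the right image and is $\sfg$-self-adjoint by a direct (long) expansion of $\langle\bdomg-\Pi^{\Sto}\bdomg,\bdeta\rangle_*$.

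For the sign analysis your construction genuinely differs from the paper's. The paper does not use a rank-one pair: it specializes to $x=y$ (so $\Sigma=U=\dI_k$), sets $\eta=x_\perp D_1$, $\bareta=x_\perp D_2$ for explicit diagonal $D_1,D_2\in\R^{k_m\times k_m}$ depending on a parameter $t$ (with $k_m=\min(k,n-k)$), and shows the resulting expression is negative for small $t$ and positive at $t=1$. Your rank-one idea --- at $x=y$ take $\eta=x_\perp b_1e_1^{\ft}$, $\bareta=x_\perp b_2e_2^{\ft}$ with $e_1\perp e_2$ in $\R^k$ and $b_1\perp b_2$ in $\R^{n-k}$ --- is cleaner and delivers negativity directly; nullity is automatic since $\Tr\eta^{\ft}\bareta=(b_1^{\ft}b_2)(e_2^{\ft}e_1)=0$. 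One caution: your appeal to $\Gr{n}{k}\cong\Gr{n}{n-k}$ is unnecessary (the hypothesis $1<k<n-1$ already gives both $k\ge 2$ and $n-k\ge 2$), and the claim that the cost becomes ``one of the same form with a shifted parameter'' under this duality is not established in the paper --- indeed the $k=n-1$ case is explicitly set aside as a separate tedious calculation --- so you should drop that parenthetical.
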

A tedious calculation also shows that for $k=n-1$, $\crosscv_{\Gro}(\bdeta)$ is positive on null vectors with $\eta\neq 0 \neq \bareta$, using the relationship between $\Gr{n}{k}$ and $\Gr{n}{n-k}$. We will not treat it here.

\begin{proof}The condition $(x^{\ft}\omega)_{\sym} = (y^{\ft}\bomg)_{\sym}=0$ for the tangent bundle of the Stiefel manifold follows from $x^{\ft}x = y^{\ft}y = \dI_k$. We can verify $(x^{\ft}(\Pi^{\Sto}(\ttq)\bdomg)_x)_{\sym}=0$ from
  $$x^{\ft}\omega - 2\Sigma L^{-1}_{\Sigma} (x^{\ft}\omega)_{\sym}
+  \omega^{\ft}x - 2 L^{-1}_{\Sigma} (x^{\ft}\omega)_{\sym}\Sigma = 2(x^{\ft}\omega)_{\sym} -2 L_{\Sigma}L^{-1}_{\Sigma} (x^{\ft}\omega)_{\sym}=0,
$$
and similarly $(y^{\ft}(\Pi^{\Sto}(\ttq)\bdomg)_y)_{\sym}=0$. If $(x^{\ft}\omega)_{\sym}=0$ then $\Tr\Sigma^{-1}x^{\ft}\omega=0$, hence $\Pi^{\Sto}(\ttq)$ is idempotent. For metric compatibility, if $\bdeta$ is tangent then $2K^2\langle \bdomg - \Pi^{\Sto}(\ttq)\bdomg, \bdeta\rangle_*$ expands to
$$\begin{gathered}
-\Tr Uy^{\ft}\left( 2yU^{\ft}L^{-1}_{\Sigma} (x^{\ft}\omega)_{\sym}
+ (\alpha+\Tr\Sigma^{-1})^{-1} \Tr(\Sigma^{-1}x^{\ft}\omega)(x - yU^{\ft}\Sigma^{-1})\right)\Tr x^{\ft}\bareta U^{\ft} \\
- \Tr\eta^{\ft} yU^{\ft}\Tr x^{\ft}\left(
     2x L^{-1}_{\Sigma} (U y^{\ft}\bomg U^{\ft})_{\sym} U \
    + (\alpha+\Tr \Sigma^{-1})^{-1}\Tr(U^{\ft} \Sigma^{-1}U y^{\ft}\bomg)(y - x\Sigma^{-1}U\right) U^{\ft}\\
+ K\Tr U \bareta^{\ft}\left(2yU^{\ft}L^{-1}_{\Sigma} (x^{\ft}\omega)_{\sym} \
+ (\alpha+\Tr\Sigma^{-1})^{-1} \Tr(\Sigma^{-1}x^{\ft}\omega)(x - yU^{\ft}\Sigma^{-1})\right) +\\
K\Tr\eta^{\ft}\left(
  2x L^{-1}_{\Sigma} (U y^{\ft}\bomg U^{\ft})_{\sym} U \
  + (\alpha+\Tr \Sigma^{-1})^{-1}\Tr(U^{\ft} \Sigma^{-1}U y^{\ft}\bomg)(y - x\Sigma^{-1}U)    \right)U^{\ft}\\
=  -\left(  \Tr\Sigma^{-1}x^{\ft}\omega
+ (\alpha+\Tr\Sigma^{-1})^{-1} \Tr(\Sigma^{-1}x^{\ft}\omega) (\Tr\Sigma - \Tr\Sigma^{-1})\right)\Tr x^{\ft}\bareta U^{\ft} \\
- \Tr\eta^{\ft} yU^{\ft}\left(
\Tr \Sigma^{-1} U y^{\ft}\bomg U^{\ft} \
     + (\alpha+\Tr \Sigma^{-1})^{-1}\Tr(U^{\ft} \Sigma^{-1}U y^{\ft}\bomg)\Tr (\Sigma - \Sigma^{-1}\right) \\
+ K\left((2\Tr U \bareta^{\ft}yU^{\ft}L^{-1}_{\Sigma} (x^{\ft}\omega)_{\sym} \
+ (\alpha+\Tr\Sigma^{-1})^{-1} \Tr(\Sigma^{-1}x^{\ft}\omega)\Tr (U \bareta^{\ft}x - U \bareta^{\ft}yU^{\ft}\Sigma^{-1})\right) +\\
K\left(
  2\Tr\eta^{\ft}x L^{-1}_{\Sigma} (U y^{\ft}\bomg U^{\ft})_{\sym} \
  + (\alpha+\Tr \Sigma^{-1})^{-1}\Tr(U^{\ft} \Sigma^{-1}U y^{\ft}\bomg)\Tr(\eta^{\ft}yU^{\ft} - \eta^{\ft}x\Sigma^{-1})    \right).
\end{gathered}
$$
Since $\eta^{\ft}x$ and $\bareta^{\ft}y$ are antisymmetric, their trace inner product with symmetric matrices, eg  $\Sigma^{-1}$ and $L^{-1}_{\Sigma} U (y^{\ft}\bomg) U^{\ft})_{\sym}$ are zero. The above becomes
$$\begin{gathered}
=  -(\alpha+\Tr\Sigma^{-1})^{-1}\left((\alpha+\Tr\Sigma^{-1})  \Tr \Sigma^{-1} x^{\ft}\omega
+  \Tr(\Sigma^{-1}x^{\ft}\omega) (\Tr\Sigma - \Tr\Sigma^{-1})\right)\Tr x^{\ft}\bareta U^{\ft} \\
- (\alpha+\Tr \Sigma^{-1})^{-1}\Tr\eta^{\ft} yU^{\ft}\left(
(\alpha+\Tr \Sigma^{-1})\Tr\Sigma^{-1} U y^{\ft}\bomg U^{\ft} \
     + \Tr(U^{\ft} \Sigma^{-1}U y^{\ft}\bomg)\Tr (\Sigma - \Sigma^{-1})\right) \\
+ K
(\alpha+\Tr\Sigma^{-1})^{-1} \Tr(\Sigma^{-1}x^{\ft}\omega)\Tr U \bareta^{\ft}x
+ K  
  (\alpha+\Tr \Sigma^{-1})^{-1}\Tr(U^{\ft} \Sigma^{-1}U y^{\ft}\bomg)\Tr\eta^{\ft}yU^{\ft}   
\end{gathered}
$$
which reduces to $0$ after grouping, with $K=\alpha + \Tr\Sigma$. The existence of the metric projection implies $\sfg$ is nondegenerate on $T\cQ_{\Sto}$. The restriction of $\sfg$ to the horizontal subspace of $T\cQ_{\Sto}$ is nondegenerate as it is the case for the vertical subspace (similar proof to the $\R_*^{n\times k}$ case). The expression for $\ttH^{\Gro}$ is the composition of $\Pi^{\Sto}$ and the horizontal projection of $\Sd{n}{k}$. For \cref{eq:GrassTanKM}, express $\eta = x_{\perp}B + xC$ then $C$ is antisymmetric, and $Uy^{\ft}\eta$ is symmetric is equivalent to
$$Uy^{\ft}x_{\perp}B + Uy^{\ft}xC = -C(Uy^{\ft}x)^{\ft} + (Uy^{\ft}x_{\perp}B)^{\ft}.$$
With $Uy^{\ft}x=\Sigma$, we get the equation for $C$. A similar computation for $y$ and $\bB$ gives \cref{eq:GrassTanKM}. It is clear that a vector of that form is horizontal.

One way to compute $\crosscv_{\Gro}$ is to use the affine Gauss-Codazzi equation. On $\cQ_{\Sto}$, we consider three bundles: $\cH_{\Gro}\subset T\cQ_{\Sto} \subset\cQ_{\Sto}\times\cE$. Let $\Two^*$ be the second fundamental form  of the bundle projection $\ttH^{\Gro}$ of the inclusion $\cH_{\Gro}\subset \cQ_{St}\times\cE$, $\sA$ be the O'Neill's tensor of the quotient $\qq:\cQ_{\Sto}\mapsto\cB_{\Gro}$, $\crosscv_{\R_*}=\langle \RcQ_{\eta_{x0},\eta_{0y}}\eta_{0y},\eta_{x0}\rangle_*$ be  given in \cref{eq:RcQFR}. For a horizontal $\bdeta$
$$\begin{gathered}
\crosscv_{\Gro}(\bdeta) = \langle \Rc_{\bdeta_{x0}, \bdeta_{0y}}^{\nabla^{\cH_{\Gro}}}\bdeta_{0y}, \bdeta_{x0}\rangle_* - 2 \langle \sA_{\bdeta_{x0}}\bdeta_{0y}, \sA_{\bdeta_{0y}}\bdeta_{x0} \rangle_*\\
=\crosscv_{\R_*} - \langle \Two^*(\bdeta_{x0}, \bdeta_{0y}), \Two^* (\bdeta_{0y}, \bdeta_{x0}) \rangle_* + \langle \Two^* (\bdeta_{0y}, \bdeta_{0y}), \Two^* (\bdeta_{x0}, \bdeta_{x0}) \rangle_* + 2 \langle \sA_{\bdeta_{x0}}\bdeta_{0y}, \sA_{\bdeta_{x0}}\bdeta_{0y} \rangle_*. \\
\end{gathered}$$
For two horizontal vectors $\bdxi, \bdeta\in\cH_{\Gro}$, from \cref{eq:oneilLie} and differentiating \cref{eq:HGro}
$$\sA_{\bdxi}\bdeta  = (
x L^{-1}_{\Sigma} (U (\bareta^{\ft}\xi-\bxi^{\ft}\eta))_{\asym}),
 y U^{\ft}L^{-1}_{\Sigma} ((\eta^{\ft}\bxi
 - \xi^{\ft}\bareta) U^{\ft})_{\asym}) U 
 ).
 $$
Set $t_S =\Tr\Sigma, t_I = \Tr\Sigma^{-1}$. Note $(\dI_{\cE} - \ttH^{\Gro})\Gamma^{\cQ_{\R_*^{n\times k}}}(\bdxi, \bdeta)$ in \cref{eq:TwocW} is the vertical component of \cref{eq:GammaxFR,eq:GammayFR}, which we can use to simplify $\Two^*$ as below
 $$(\dI_{\cE} - \ttH^{\Gro})\Gamma^{\cQ}(\bdxi, \bdeta) = (x L^{-1}_{\Sigma} (U\bxi^{\ft}\eta + U\bareta^{\ft}\xi)_{\asym},
            - yU^{\ft}L^{-1}_{\Sigma} (U\bareta^{\ft}\xi + 
            U\bxi^{\ft}\eta)_{\asym}U),
                                          $$
 $$\begin{gathered}\Two^*(\bdxi, \bdeta) = (
 - x L^{-1}_{\Sigma} (U \bxi^{\ft}\eta - U\bareta^{\ft}\xi)_{\asym}
 - 2y U^{\ft}L^{-1}_{\Sigma} (\xi^{\ft}\eta)_{\sym}
 - (\alpha+t_I)^{-1}\Tr(\Sigma^{-1}\xi^{\ft}\eta)(x - y U^{\ft}\Sigma^{-1}),\\
 - y U^{\ft} L^{-1}_{\Sigma} (U\bxi^{\ft}\eta
 - U\bareta^{\ft}\xi)_{\asym}U
 - 2x L^{-1}_{\Sigma} (U \bxi^{\ft}\bareta U^{\ft})_{\sym} U
 - (\alpha+t_I)^{-1}\Tr(U^{\ft}\Sigma^{-1}U\bxi^{\ft}\bareta)(y - x\Sigma^{-1}U)
            ),
\end{gathered}$$
 $$\begin{gathered}
\Two^*(\bdeta_{x0}, \bdeta_{0y}) =(x L^{-1}_{\Sigma} (U\bareta^{\ft}\eta)_{\asym}
,
y U^{\ft}L^{-1}_{\Sigma} (U\bareta^{\ft}\eta)_{\asym}) U),\\
\Two^*(\bdeta_{x0}, \bdeta_{x0}) = (
            - 2yU^{\ft} L^{-1}_{\Sigma} \eta^{\ft}\eta
            - (\alpha+t_I)^{-1}\Tr(\Sigma^{-1}\eta^{\ft}\eta)(x - yU^{\ft}\Sigma^{-1}),
            0
            ),\\
   \Two^*(\bdeta_{0y}, \bdeta_{0y}) = (0,
- 2xL^{-1}_{\Sigma} (U \bareta^{\ft}\bareta U^{\ft}) U
- (\alpha+t_I)^{-1}\Tr (U^{\ft}\Sigma^{-1}U\bareta^{\ft}\bareta)(y - x\Sigma^{-1}U)
            )).
\end{gathered}$$
From here, $\langle\sA_{\bdeta_{x0}}\bdeta_{0y},\sA_{\bdeta_{x0}}\bdeta_{0y}\rangle_* = \langle \Two^*(\bdeta_{x0}, \bdeta_{0y}), \Two^*(\bdeta_{x0}, \bdeta_{0y}) \rangle_* $ are both given by \cref{eq:ONeilFrNorm}. As
$$\begin{gathered}\langle (yU^{\ft} L^{-1}_{\Sigma} \eta^{\ft}\eta), 0),
  (0,   xL^{-1}_{\Sigma} (U \bareta^{\ft}\bareta U^{\ft}) U)\rangle_* =
  - \frac{1}{2K^2}\Tr(Uy^{\ft}yU^{\ft} L^{-1}_{\Sigma} \eta^{\ft}\eta))\Tr(x^{\ft}xL^{-1}_{\Sigma} (U \bareta^{\ft}\bareta U^{\ft}) U) U^{\ft}) \\
  + \frac{1}{2K}\Tr((yU^{\ft} L^{-1}_{\Sigma} \eta^{\ft}\eta))^{\ft}xL^{-1}_{\Sigma} (U \bareta^{\ft}\bareta U^{\ft}) U) U^{\ft})\\
= - \frac{1}{8K^2}\Tr(\Sigma^{-1} \eta^{\ft}\eta) \
            \Tr(\Sigma^{-1} U \bareta^{\ft}\bareta U^{\ft}) 
            + \frac{1}{K}\Tr\{L^{-1}_{\Sigma}(\eta^{\ft}\eta)\Sigma L^{-1}_{\Sigma} (U\bareta^{\ft}\bareta U^{\ft})\},
\end{gathered}          $$
$$\begin{gathered}
\langle ( x-y U^{\ft}\Sigma^{-1}, 0),
(0,   xL^{-1}_{\Sigma} (U \bareta^{\ft}\bareta U^{\ft}) U)\rangle_*
=- \frac{1}{2K^2}\Tr(Uy^{\ft}( x-y U^{\ft}\Sigma^{-1}))\Tr(x^{\ft}(xL^{-1}_{\Sigma} (U \bareta^{\ft}\bareta U^{\ft}) U)) U^{\ft}) \\
+ \frac{1}{2K}\Tr(( x-y U^{\ft}\Sigma^{-1})^{\ft}(xL^{-1}_{\Sigma} (U \bareta^{\ft}\bareta U^{\ft}) U)) U^{\ft})
= - \frac{t_S - t_I}{4K^2}\Tr(\Sigma^{-1} U \bareta^{\ft} \bareta U^{\ft}),
\end{gathered}$$
$$\begin{gathered}
\langle (yU^{\ft} L^{-1}_{\Sigma} \eta^{\ft}\eta), 0),
(0,   y-x\Sigma^{-1}U\rangle_* =  - \frac{t_S - t_I}{4K^2}\Tr(\Sigma^{-1}\eta^{\ft}\eta),\\
\langle ( x-yU^{\ft}\Sigma^{-1}, 0) , (0,   y-x\Sigma^{-1}U\rangle_*
=  -\frac{(t_S-t_I)^2}{2K^2} +\frac{(t_S-t_I)}{2K}= \frac{(t_S-t_I)(\alpha + t_I)}{2K^2},
\end{gathered}$$
expanding $\langle
\Two^*(\bdeta_{x0}, \bdeta_{x0}) ,\Two^*(\bdeta_{0y}, \bdeta_{0y})
\rangle_*$, in addition to $\frac{4}{K}\Tr\{L^{-1}_{\Sigma}(\eta^{\ft}\eta)\Sigma L^{-1}_{\Sigma} (U\bareta^{\ft}\bareta U^{\ft})\}$, the remaining terms collect to $f\Tr(\Sigma^{-1}\eta^{\ft}\eta)\Tr (U^{\ft}\Sigma^{-1}U\bareta^{\ft}\bareta)$, with the total coefficient 
$$f = -\frac{4}{8K^2} -2\frac{t_S- t_I}{4K^2(\alpha + t_I)} -2\frac{t_S-t_I}{4K^2(\alpha + t_I)} +\frac{t_S-t_I}{2K^2(\alpha+t_I)}=-\frac{1}{2K(\alpha+t_I)},
$$
which gives us \cref{eq:crosscvGr}.

When $k=1$, $\Sigma = \sigma = |x^{\ft}y|$ is a scalar, the cross-curvature reduces to
$$\crosscv(\bdeta) = - \langle \bdeta, \bdeta\rangle_*^2 \
+ \frac{\eta^{\ft}\eta \bareta^{\ft}\bareta\alpha}{2(\alpha + \sigma)(1+\alpha\sigma)},$$
which is positive for null vectors with $\eta^{\ft}\eta \bareta^{\ft}\bareta\neq 0$.

When $1 < k <n-1$, for simplicity, consider the case $x = y$, then $\Sigma = \dI_k = U$. Let $\eta =x_{\perp}B, \bareta = x_{\perp}\bB$. The cross-curvature for a null vector reduces to
$$\frac{1}{\alpha +k}\Tr(\bareta^{\ft}\eta)_{\asym}(\eta^{\ft}\bareta)_{\asym} + \frac{1}{2(\alpha + k)}\Tr\eta^{\ft}\eta \bareta^{\ft}\bareta- \frac{1}{2(\alpha + k)^2}\Tr\eta^{\ft}\eta \Tr\bareta^{\ft}\bareta.
$$
Let $k_m = \min(k, n-k)$, and for some $t\in \R$, set $D_1$ and $D_2$ to be diagonal matrices in $\R^{k_m\times k_m}$ with diagonals $(1, t,\cdots t)$ and $(-t - (k_m-2)t^2, t,\cdots t, 1)$, respectively. If $k \leq n/2$, let $B=\begin{bmatrix}D_1\\0_{((n-2k)\times k}\end{bmatrix}, \bar{B}=\begin{bmatrix}D_2\\0_{(n-2k)\times k}\end{bmatrix}$ , and $\eta = x_{\perp}B, \bareta = x_{\perp}\bar{B}$. Then $\langle \bdeta, \bdeta\rangle = 1/K\Tr D_1D_2=0$, while the cross-curvature is
$$  \frac{(t+(k_m-2)t^2)^2 + (k_m-2)t^4 + t^2}{2(\alpha + k)} \
        - \frac{(1+(k_m-1)t^2)((t+(k_m-2)t^2)^2 + (k_m-2)t^2 + 1)}{2(\alpha + k)^2}.
$$
It is negative if $t$ is small, and equal to $\frac{1}{2(k+\alpha)^2}k_m(k_m-1)(\alpha+k-k_m)>0$ if $t=1$. The case $n/2 < k < n-1$ is similar, with the zero matrices appended on $D_1, D_2 \in\R^{k_m\times k_m}$ to the right.
\end{proof}
\section{Discussion and future works}\label{sec:Discussion}
We provide global formulas for several geometric expressions, which we believe facilitate computations for embedded manifolds in both pure and applied mathematics. We expect the method will have new applications in mechanics, gravity, and optimal transport. The nonnegativity cross-curvature on the reflector antenna-type cost function on the semidefinite matrices is likely to extend to certain representations of semisimple Lie groups of non-compact type, and other cost functions defined via the polar decomposition could also be considered, in particular, we hope to study the cross-curvature of square Riemannian cost function for off-diagonal pairs, presumably on certain symmetric spaces.
\begin{appendix}
\section{Extension of a metric tensor to a metric operator}\label{appx:Extend}
\begin{proposition}\label{prop:emb_exists}If a manifold $\cQ$ is differentiably embedded in $\cE$, for $q\in\cQ$, let $\Pi^{\cE}: q\mapsto \Pi^{\cE}(q)$ be the projection from $\cE$ to $T_q\cQ$ under Euclidean inner product $\langle, \rangle_{\cE}$ on $\cE$. Assume $\cQ$ is equipped with a semi-Riemannian metric $\langle,\rangle_R$, the operator-valued function $\sfg$ from $\cQ$ to $\Lin(\cE, \cE)$
\begin{equation}\label{eq:stdg}
  \sfg(q)\omega = (\dI_{\cE} - \Pi^{\cE}(q))\omega + \sfg_{R, q}(\Pi^{\cE}(q)\omega),
\end{equation}  
where $\sfg_{R, q}$ is the unique self-adjoint operator on $T_q\cQ$ such that
\begin{equation}\langle \xi, \eta\rangle_{R, q} = \langle \xi, \sfg_{R, q}\eta\rangle_{\cE}.\label{eq:construct}\end{equation}
extends the pairing from $T_q\cQ$ to $\cE$, that means
  \begin{equation}\langle \eta, \xi\rangle_{R, q} = \langle\eta, \sfg(q)\xi\rangle_{\cE}.\label{eq:consistentExtend}
  \end{equation}
If $\sfg$ is defined in \cref{eq:stdg}, the operator $\Pi^{\cE}$ satisfies $\Pi^{\cE}\sfg =\sfg\Pi^{\cE}$, and the cotangent bundle is identified with the tangent bundle.

If $\langle,\rangle_R$ is a Riemannian metric then $\sfg$ is positive-definite.
\end{proposition}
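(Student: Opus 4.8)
The plan is to argue fiberwise at a fixed $q\in\cQ$, using the $\langle,\rangle_{\cE}$-orthogonal splitting $\cE = T_q\cQ \oplus (T_q\cQ)^{\perp}$, on which $\Pi^{\cE}(q)$ is the projection onto the first summand and $\dI_{\cE}-\Pi^{\cE}(q)$ the projection onto the second; note $\Pi^{\cE}(q)$ is self-adjoint, being an orthogonal projection. First I would record that $\sfg_{R,q}$ is well defined: $\langle,\rangle_{R,q}$ is a nondegenerate symmetric bilinear form on the finite-dimensional inner product space $(T_q\cQ,\langle,\rangle_{\cE})$, hence is represented by a unique invertible self-adjoint operator $\sfg_{R,q}$ on $T_q\cQ$ via \eqref{eq:construct}, and $q\mapsto\sfg_{R,q}$ is smooth because both the metric and $\Pi^{\cE}$ are. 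Then \eqref{eq:stdg} exhibits $\sfg(q)$ as the block operator acting as the identity on $(T_q\cQ)^{\perp}$ and as $\sfg_{R,q}$ on $T_q\cQ$; in particular $\sfg(q)$ preserves the splitting.

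The consistency identity \eqref{eq:consistentExtend} is then immediate: for $\eta,\xi\in T_q\cQ$ we have $(\dI_{\cE}-\Pi^{\cE}(q))\xi = 0$ and $\Pi^{\cE}(q)\xi = \xi$, so $\sfg(q)\xi = \sfg_{R,q}\xi$ and $\langle\eta,\sfg(q)\xi\rangle_{\cE} = \langle\eta,\sfg_{R,q}\xi\rangle_{\cE} = \langle\eta,\xi\rangle_{R,q}$ by \eqref{eq:construct}. For the structural claims I would check, in order: (i) $\sfg(q)$ is self-adjoint --- the summand $\dI_{\cE}-\Pi^{\cE}(q)$ is self-adjoint, and for $\sfg_{R,q}\circ\Pi^{\cE}(q)$ one uses that its image lies in $T_q\cQ$ together with the self-adjointness of $\Pi^{\cE}(q)$ and of $\sfg_{R,q}$ on $T_q\cQ$ to move $\Pi^{\cE}(q)$ to the other slot; (ii) $\sfg(q)$ is invertible --- it is the identity on $(T_q\cQ)^{\perp}$ and the invertible $\sfg_{R,q}$ on $T_q\cQ$ and preserves the direct sum; (iii) $\Pi^{\cE}(q)\sfg(q) = \sfg(q)\Pi^{\cE}(q)$ --- both sides equal $\sfg_{R,q}\Pi^{\cE}(q)$, since $\sfg(q)$ maps $T_q\cQ$ into $T_q\cQ$ and $(T_q\cQ)^{\perp}$ into $(T_q\cQ)^{\perp}$. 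From (iii) together with self-adjointness of $\Pi^{\cE}(q)$, the metric-compatibility condition $\sfg(q)\Pi^{\cE}(q) = \Pi^{\cE}(q)^{\ft}\sfg(q)$ of the Remark in \cref{sec:BundleEmbeded} holds with $\Pi_{\sfg} = \Pi^{\cE}$, so $T^*_q\cQ$ is identified with $\Imag(\Pi^{\cE}(q)^{\ft}) = \Imag(\Pi^{\cE}(q)) = T_q\cQ$.

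For the last statement, assume $\langle,\rangle_R$ is Riemannian, so $\sfg_{R,q}$ is positive definite on $T_q\cQ$. For $\omega\in\cE$ write $\omega = \omega_{\parallel}+\omega_{\perp}$ with $\omega_{\parallel} = \Pi^{\cE}(q)\omega\in T_q\cQ$ and $\omega_{\perp} = (\dI_{\cE}-\Pi^{\cE}(q))\omega\in(T_q\cQ)^{\perp}$. Since $\sfg(q)\omega = \omega_{\perp}+\sfg_{R,q}\omega_{\parallel}$ and $\sfg_{R,q}\omega_{\parallel}\in T_q\cQ$ is orthogonal to $\omega_{\perp}$, the cross terms vanish and $\langle\omega,\sfg(q)\omega\rangle_{\cE} = \langle\omega_{\perp},\omega_{\perp}\rangle_{\cE} + \langle\omega_{\parallel},\sfg_{R,q}\omega_{\parallel}\rangle_{\cE}\ge 0$, with equality forcing $\omega_{\perp}=0$ and, by positive definiteness of $\sfg_{R,q}$, $\omega_{\parallel}=0$, i.e. $\omega=0$. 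Hence $\sfg(q)$ is positive definite. There is no serious obstacle in this proof; the only points requiring care are the bookkeeping around the orthogonal splitting, the repeated use of the facts that $\Pi^{\cE}(q)$ is self-adjoint and that $\Imag(\sfg_{R,q}) = T_q\cQ$ (which is exactly what lets $\Pi^{\cE}(q)$ commute past $\sfg(q)$), and the smoothness of $q\mapsto\sfg(q)$, which follows from smoothness of the embedding, hence of $\Pi^{\cE}$, and of the metric.
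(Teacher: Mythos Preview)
Your proof is correct and follows essentially the same approach as the paper: both argue fiberwise via the orthogonal splitting $\cE = T_q\cQ \oplus (T_q\cQ)^\perp$, observe that $\sfg(q)$ acts block-diagonally as $\sfg_{R,q}$ on the first summand and the identity on the second, and deduce self-adjointness, invertibility, the commutation $\Pi^{\cE}\sfg = \sfg\Pi^{\cE}$, and positive-definiteness from this block structure. Your write-up is somewhat more explicit (spelling out the cross-term cancellation for positive-definiteness and noting smoothness of $q\mapsto\sfg(q)$), but there is no substantive difference in strategy.
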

\begin{proof} To show $\sfg$ defined in \cref{eq:stdg} is self-adjoint, observe for each $q$, $T_q\cQ$ and $N_q\cQ :=\Imag(\dI_{\cE} - \Pi^{\cE}(q))$ are orthogonal complement invariant subspaces of $\sfg(q)$, restricting to each subspace $\sfg(q)$ is self-adjoint. Equation (\ref{eq:consistentExtend}) follows from \cref{eq:construct}. We can see $\sfg^{-1}$ is given by
  $$\sfg(q)^{-1}\omega = (\dI_{\cE} - \Pi^{\cE}(q))\omega + \sfg_{R, q}^{-1}(\Pi^{\cE}(q)\omega).$$
  where we use $\Pi^{\cE}(q)\sfg_{R,q}\eta =\sfg_{R,q}\eta$ for $\eta\in T_q\cQ$ as $\sfg_R(q)$ maps $T_q\cQ$ to itself.

For the next statement, apply $\Pi^{\cE}(q)$ to both sides of \cref{eq:stdg}, we get
$$\Pi^{\cE}(q)\sfg(q)\omega = \Pi^{\cE}(q)\sfg_{R, q}(\Pi^{\cE}(q)\omega) = \sfg_{R, q}(\Pi^{\cE}(q)\omega).$$
If $\sfg_R$ is Riemannian, $\sfg$ is positive-definite as it is positive-definite on both $T_q\cQ$ and $\Imag(\dI_{\cE}-\Pi^{\cE}(q))$.
\end{proof}
For the sphere $S^{n-1}\subset \cE=\R^n$, assume $\Lambda: q\mapsto\Lambda(q)\in \R^{n\times n}$ is a positive-definite metric operator. Then $\xi . \Lambda(q)\eta$ for $\xi, \eta\in T_q\cQ$ define a metric on $S^{n-1}$. Equation (\ref{eq:stdg}) extends it to
$$\sfg(q)\omega = qq^T\omega + (\dI_n - qq^{\ft})\Lambda(q)(\dI_n - qq^{\ft})\omega$$
for $\omega\in\cE$, and $T_q^*\cQ$ is identified with $T_q\cQ$. Another extension of this metric to an operator on $\cE$ is to define $\sfg_1(q)\omega = \Lambda(q)\omega$, with $T^*\cQ$ identified with $\Lambda(q) T_q\cQ$. The Levi-Civita connection in \cref{eq:LeviCivita} for either extension is the same on tangent vectors, but to verify it algebraically even when $\Lambda(q)$ is constant, diagonal is quite an exercise.
\end{appendix}

\bibliographystyle{elsarticle-num-names}
\bibliography{embedded_hamilton}

\begin{thebibliography}{19}
\expandafter\ifx\csname natexlab\endcsname\relax\def\natexlab#1{#1}\fi
\providecommand{\url}[1]{\texttt{#1}}
\providecommand{\href}[2]{#2}
\providecommand{\path}[1]{#1}
\providecommand{\DOIprefix}{doi:}
\providecommand{\ArXivprefix}{arXiv:}
\providecommand{\URLprefix}{URL: }
\providecommand{\Pubmedprefix}{pmid:}
\providecommand{\doi}[1]{\href{http://dx.doi.org/#1}{\path{#1}}}
\providecommand{\Pubmed}[1]{\href{pmid:#1}{\path{#1}}}
\providecommand{\bibinfo}[2]{#2}
\ifx\xfnm\relax \def\xfnm[#1]{\unskip,\space#1}\fi
\bibitem[{Lee et~al.(2015)Lee, Leok, and McClamroch}]{LeeLeokM}
\bibinfo{author}{T.~Lee}, \bibinfo{author}{M.~Leok}, \bibinfo{author}{N.~H.
  McClamroch},
\newblock \bibinfo{title}{Global formulations of {L}agrangian and {H}amiltonian
  dynamics on embedded manifolds},
\newblock in: \bibinfo{booktitle}{Proceedings of the IMA Conference on
  Mathematics of Robotics}, \bibinfo{year}{2015}. \URLprefix
  \url{https://doi.org/10.19124/ima.2015.001.19.}
\bibitem[{Kim and McCann(2010)}]{KimMcCann}
\bibinfo{author}{Y.~Kim}, \bibinfo{author}{R.~McCann},
\newblock \bibinfo{title}{Continuity, curvature, and the general covariance of
  optimal transportation},
\newblock \bibinfo{journal}{J. Eur. Math. Soc.} \bibinfo{volume}{12}
  (\bibinfo{year}{2010}) \bibinfo{pages}{1009–1040}. \URLprefix
  \url{https://doi.org/10.4171/JEMS/221}.
\bibitem[{Ma et~al.(2005)Ma, Trudinger, and Wang}]{MTW}
\bibinfo{author}{X.~Ma}, \bibinfo{author}{N.~Trudinger},
  \bibinfo{author}{X.~Wang},
\newblock \bibinfo{title}{Regularity of potential functions of the optimal
  transportation problem.},
\newblock \bibinfo{journal}{Arch. Rational Mech. Anal.} \bibinfo{volume}{177}
  (\bibinfo{year}{2005}) \bibinfo{pages}{151--183}. \URLprefix
  \url{https://doi.org/10.1007/s00205-005-0362-9}.
\bibitem[{Loeper(2011)}]{Loeper}
\bibinfo{author}{G.~Loeper},
\newblock \bibinfo{title}{Regularity of optimal maps on the sphere: the
  quadratic cost and the reflector antenna.},
\newblock \bibinfo{journal}{Arch. Rational Mech. Anal.} \bibinfo{volume}{199}
  (\bibinfo{year}{2011}) \bibinfo{pages}{269--289}. \URLprefix
  \url{https://doi.org/10.1007/s00205-010-0330-x}.
\bibitem[{Lee and McCann(2009)}]{LeeMcCann}
\bibinfo{author}{P.~Lee}, \bibinfo{author}{R.~McCann},
\newblock \bibinfo{title}{The {Ma-Trudinger-Wang} curvature for natural
  mechanical actions},
\newblock \bibinfo{journal}{Calculus of Variations and Partial Differential
  Equations} \bibinfo{volume}{41} (\bibinfo{year}{2009})
  \bibinfo{pages}{285–299}. \URLprefix
  \url{https://doi.org/10.1007/s00526-010-0362-y}.
\bibitem[{Kim and McCann(2012)}]{KimMcCann2012}
\bibinfo{author}{Y.~Kim}, \bibinfo{author}{R.~McCann},
\newblock \bibinfo{title}{Towards the smoothness of optimal maps on
  {R}iemannian submersions and {R}iemannian products (of round spheres in
  particular)},
\newblock \bibinfo{journal}{Journal f{\"u}r die reine und angewandte
  Mathematik} \bibinfo{volume}{2012} (\bibinfo{year}{2012})
  \bibinfo{pages}{1--27}. \URLprefix
  \url{https://doi.org/10.1515/CRELLE.2011.105}.
\bibitem[{Abraham and Marsden(2008)}]{AbrahamMarsden}
\bibinfo{author}{R.~Abraham}, \bibinfo{author}{J.~Marsden},
  \bibinfo{title}{Foundations of Mechanics}, AMS Chelsea publishing,
  \bibinfo{publisher}{AMS Chelsea Pub./American Mathematical Society},
  \bibinfo{year}{2008}.
\bibitem[{Nguyen(2020)}]{Nguyen2020a}
\bibinfo{author}{D.~Nguyen}, \bibinfo{title}{Operator-valued formulas for
  {R}iemannian gradient and {H}essian and families of tractable metrics in
  optimization and machine learning},
  \bibinfo{howpublished}{https://arxiv.org/abs/2009.10159},
  \bibinfo{year}{2020}.
\bibitem[{Edelman et~al.(1999)Edelman, Arias, and Smith}]{Edelman_1999}
\bibinfo{author}{A.~Edelman}, \bibinfo{author}{T.~A. Arias},
  \bibinfo{author}{S.~T. Smith},
\newblock \bibinfo{title}{The geometry of algorithms with orthogonality
  constraints},
\newblock \bibinfo{journal}{SIAM J. Matrix Anal. Appl.} \bibinfo{volume}{20}
  (\bibinfo{year}{1999}) \bibinfo{pages}{303--353}.
\bibitem[{Michor(2008)}]{Michor}
\bibinfo{author}{P.~Michor}, \bibinfo{title}{Topics in Differential Geometry},
  Graduate studies in mathematics, \bibinfo{publisher}{American Mathematical
  Society}, \bibinfo{year}{2008}.
\bibitem[{O'Neill(1966)}]{ONeill1966}
\bibinfo{author}{B.~O'Neill},
\newblock \bibinfo{title}{The fundamental equations of a submersion.},
\newblock \bibinfo{journal}{Michigan Math. J.} \bibinfo{volume}{13}
  (\bibinfo{year}{1966}) \bibinfo{pages}{459--469}. \URLprefix
  \url{https://doi.org/10.1307/mmj/1028999604}.
\bibitem[{Wong(2018)}]{WongInfo}
\bibinfo{author}{T.~Wong},
\newblock \bibinfo{title}{Logarithmic divergences from optimal transport and
  {R}{\'e}nyi geometry.},
\newblock \bibinfo{journal}{Information Geometry} \bibinfo{volume}{1}
  (\bibinfo{year}{2018}) \bibinfo{pages}{39--78}. \URLprefix
  \url{https://doi.org/10.1007/s41884-018-0012-6}.
\bibitem[{Nguyen(2023)}]{NguyenVerify}
\bibinfo{author}{D.~Nguyen},
  \bibinfo{howpublished}{https://github.com/dnguyend/EmbeddedGeometry},
  \bibinfo{year}{2023}. \bibinfo{note}{(Repository for embedded geometry)}.
\bibitem[{Spivak(1999)}]{Spivak}
\bibinfo{author}{M.~Spivak}, \bibinfo{title}{A Comprehensive Introduction to
  Differential Geometry}, volume~\bibinfo{volume}{I},
  \bibinfo{publisher}{Publish or Perish, Inc.}, \bibinfo{year}{1999}.
\bibitem[{Nielsen(2020)}]{Nielsen}
\bibinfo{author}{F.~Nielsen},
\newblock \bibinfo{title}{An elementary introduction to information geometry},
\newblock \bibinfo{journal}{Entropy} \bibinfo{volume}{22}
  (\bibinfo{year}{2020}). \URLprefix
  \url{https://www.mdpi.com/1099-4300/22/10/1100}.
  \DOIprefix\doi{10.3390/e22101100}.
\bibitem[{Lee et~al.(2017)Lee, Leok, and McClamroch}]{LeeLeokMBook}
\bibinfo{author}{T.~Lee}, \bibinfo{author}{M.~Leok},
  \bibinfo{author}{N.~McClamroch}, \bibinfo{title}{Global Formulations of
  {L}agrangian and {H}amiltonian Dynamics on Manifolds: A Geometric Approach to
  Modeling and Analysis}, Interaction of Mechanics and Mathematics,
  \bibinfo{publisher}{Springer International Publishing}, \bibinfo{year}{2017}.
\bibitem[{Arnold et~al.(2013)Arnold, Vogtmann, and Weinstein}]{Arnold}
\bibinfo{author}{V.~Arnold}, \bibinfo{author}{K.~Vogtmann},
  \bibinfo{author}{A.~Weinstein}, \bibinfo{title}{Mathematical Methods of
  Classical Mechanics}, Graduate Texts in Mathematics,
  \bibinfo{publisher}{Springer New York}, \bibinfo{year}{2013}.
\bibitem[{Nguyen(2022)}]{NguyenJLT}
\bibinfo{author}{D.~Nguyen},
\newblock \bibinfo{title}{Curvatures of {S}tiefel manifolds with deformation
  metrics},
\newblock \bibinfo{journal}{Journal of Lie Theory} \bibinfo{volume}{32}
  (\bibinfo{year}{2022}) \bibinfo{pages}{563--600}.
\bibitem[{Wong(1968)}]{Wong}
\bibinfo{author}{Y.-C. Wong},
\newblock \bibinfo{title}{Sectional curvatures of {G}rassmann manifolds},
\newblock \bibinfo{journal}{Proc. Natl. Acad. Sci. USA} \bibinfo{volume}{60}
  (\bibinfo{year}{1968}) \bibinfo{pages}{75–--79}.

\end{thebibliography}

\end{document}